\documentclass[a4paper,10pt,twoside]{amsart}
\usepackage[left=2.7cm,right=2.7cm,top=3.5cm,bottom=3cm]{geometry}
\usepackage[english]{babel}
\usepackage[latin1]{inputenc}
\usepackage{amsmath}
\usepackage{amsfonts}
\usepackage{amsthm}
\usepackage{amscd}
\usepackage{amssymb}
\usepackage[all]{xy}
\usepackage{graphicx}
\usepackage[usenames,dvipsnames]{color}

\theoremstyle{plain}
\newtheorem{thm}{Theorem}[section]
\newtheorem{cor}[thm]{Corollary}
\newtheorem{lem}[thm]{Lemma}
\newtheorem{prop}[thm]{Proposition}

\theoremstyle{definition}
\newtheorem{dfn}[thm]{Definition}
\newtheorem{eg}[thm]{Example}
\newtheorem{rmk}[thm]{Remark}
\newtheorem{rmks}[thm]{Remarks}
\newtheorem{ass}[thm]{Assumption}
\newenvironment{prf}{\begin{proof}[Proof]}{\end{proof}}
\newenvironment{skprf}{\begin{proof}[Sketch of proof]}{\end{proof}}

\renewcommand{\ge}{\geqslant}
\renewcommand{\le}{\leqslant}

\newcommand{\field}[1]{\mathbb{#1}}
\newcommand{\Q}{\field{Q}}
\newcommand{\C}{\field{C}}
\newcommand{\R}{\field{R}}
\newcommand{\N}{\field{N}}
\newcommand{\Z}{\field{Z}}
\newcommand{\A}{\field{A}}
\newcommand{\F}{\field{F}}
\newcommand{\p}{\field{P}}

\newcommand{\idl}{{\bf I}}

\newcommand{\za}{\widehat{\Z}}
\newcommand{\da}{\widehat D}
\newcommand{\xa}{\widehat X}
\newcommand{\dap}{\widehat D_\gotp}
\newcommand{\wds}{\widehat{D^*}}
\newcommand{\hpi}{\widehat\pi}
\newcommand{\cpr}{\widehat{\calp}}

\DeclareMathOperator{\GL}{GL}
\DeclareMathOperator{\SL}{SL}

\DeclareMathOperator{\dlog}{dlog}
\DeclareMathOperator{\supp}{supp}

\DeclareMathOperator{\Irr}{Irr}

\DeclareMathOperator{\car}{char}
\DeclareMathOperator{\cls}{Cl}

\DeclareMathOperator{\dlib}{d}
\newcommand{\dsbk}{\dlib_{\rm Bk}}
\newcommand{\dsbkp}{\dsbk^+}
\newcommand{\dsbkm}{\dsbk^-}

\newcommand{\dsas}{\dlib_{\rm as}}
\newcommand{\dsbas}{\dlib_{B\!,{\rm as}}}
\newcommand{\dsbasp}{\dsbas^+}
\newcommand{\dsbasm}{\dsbas^-}

\newcommand{\dsan}{\dlib_{\rm an}}
\newcommand{\dsanp}{\dsan^+}
\newcommand{\dsanm}{\dsan^-}
\newcommand{\dsans}{\dlib_{\rm an*}}

\newcommand{\dsl}{\dlib_{\log}}

\newcommand{\dsasp}{\dsas^+}  
\newcommand{\dsasm}{\dsas^-}

\newcommand{\cala}{\mathcal A}

\newcommand{\cald}{\mathcal D}

\newcommand{\cali}{\mathcal I}
\newcommand{\calj}{\mathcal J}
\newcommand{\calk}{\mathcal K}

\newcommand{\calo}{\mathcal O}
\newcommand{\calp}{\mathcal P}
\newcommand{\calq}{\mathcal Q}

\newcommand{\cals}{\mathcal S}
\newcommand{\calt}{\mathcal T}
\newcommand{\calu}{\mathcal U}
\newcommand{\calv}{\mathcal V}

\newcommand{\calx}{\mathcal X}
\newcommand{\caly}{\mathcal Y}
\newcommand{\calz}{\mathcal Z}
\newcommand{\gota}{\mathfrak a}
\newcommand{\gotb}{\mathfrak b}
\newcommand{\gotc}{\mathfrak c}
\newcommand{\gotd}{\mathfrak d}

\newcommand{\gotk}{\mathfrak k}
\newcommand{\gotm}{\mathfrak m}
\newcommand{\gotn}{\mathfrak n}
\newcommand\gotp{\mathfrak p}
\newcommand\gotq{\mathfrak q}
\newcommand{\gotr}{\mathfrak r}

\newcommand{\buno}{\mathbf 1}

\newcommand{\hooklongrightarrow}{\lhook\joinrel\longrightarrow}

\newcommand{\interior}[1]{%
{\kern0pt#1}^{\mathrm{o}}%
}

\newfont{\cyr}{wncyr10 scaled 1000}

\begin{document}

\title[Densities on Dedekind domains, completions and Haar measure]{Densities on Dedekind domains, completions and Haar measure}

\author[Demangos]{Luca Demangos}
\address{Department of Mathematics, MB423B, Xi'an Jiaotong - Liverpool University, 215123 Suzhou, China}
\email{Luca.Demangos@xjtlu.edu.cn} 

\author[Longhi]{Ignazio Longhi}
\address{Department of Mathematics, Indian Institute of Science, Bangalore.\newline
Current affiliation: Dipartimento di Matematica dell'Universit\`a di Torino, via Carlo Alberto 10, 10123, Torino, Italy}
\email{ignazio.longhi@unito.it}

\begin{abstract}  Let $D$ be the ring of $S$-integers in a global field and $\da$ its profinite completion. Given $X\subseteq D^n$, we consider its closure $\xa\subseteq\da^n$ and ask what can be learned from $\xa$ about the ``size'' of $X$. In particular, we ask when the density of $X$ is equal to the Haar measure of $\xa$. 

We provide a general definition of density which encompasses the most commonly used ones. Using it we give a necessary and sufficient condition for the equality between density and measure which subsumes a criterion due to Poonen and Stoll. We also show how Ekedahl's sieve fits into our setting and find conditions ensuring that $\xa$ can be written as a product of local closures.

In another direction, we extend the Davenport-Erd\H os theorem to every $D$ as above and offer a new interpretation of it as a ``density=measure'' result. Our point of view also provides a simple proof that in any $D$ the set of elements divisible by at most $k$ distinct primes has density 0 for any $k\in\N$.

Finally, we show that the closure of the set of prime elements of $D$ is the union of the group of units of $\da$ with a negligible part. \end{abstract}

\maketitle
\tableofcontents

\section{Introduction}

\subsection{Motivation} One of the most basic questions in number theory is to find out how big are certain interesting subsets of $\N$, the set of natural numbers. 

It is obvious that finite sets must be very small; moreover, one can give a quantitative assessment of the size of a finite $X\subset\N$ just by computing its cardinality. For an infinite $X$ the traditional approach is to use some form of density, most often the asymptotic one
$$\dsas(X):=\lim_{r\rightarrow\infty}\frac{|X\cap[0,r]|}{r}\,,$$
but many other definitions of density have found their use in number theory. Intuitively, a density should behave similarly to a measure (see for example the discussion in \cite[III.1]{tnm}) and, starting at least with \cite{buck}, much research has gone into fitting densities into a measure theory framework (we mention \cite{fga}, as well as the notion of density measure developed in papers like \cite{mhr} and \cite{vndw}).

One can also observe that densities are usually defined exploiting the canonical injection $\N\hookrightarrow\R$ and that $\R$ is not the only topological space with nice properties into which one could embed $\N$. Number theorists know that $\R$ is just one of many completions of $\Q$; and the principle that ``all places were created with equal rights'' suggests of looking instead at the embedding of $\N$ into $\za\simeq\prod\Z_p$, the profinite completion of $\Z$. As a compact group, $\za$ is endowed with a Haar measure $\mu$ and it is easy to see that if $X$ is an arithmetic progression then its asymptotic density is exactly $\mu(\xa)$, where $\xa$ is the closure of $X$ in $\za$. This fact has been noticed by many people and the idea of evaluating the size of a general $X\subseteq\Z$ by computing $\mu(\xa)$ is natural enough that it was independently developed by several mathematicians. It was already implicitly behind Buck's paper \cite{buck} (even if, as far as we can judge, most likely Buck did not know about $\za$); and it arose again in works by Novoselov \cite{nov}, Ekedahl \cite{ekd}, Poonen and Stoll \cite{ps1}, Kubota and Sugita \cite{ks1} - apparently all unaware of their predecessors (the partial exception is \cite{ps1}, whose authors learned of \cite{ekd} just after completing their own paper). In particular, \cite{nov} and \cite{ks1} were motivated by the fact that $\za$ can be thought of as a probability space: we recommend \cite{ind1} and \cite{ind2} for a survey of different approaches to probabilistic number theory by compactifications of $\N$.

Introducing $\za$ makes it quite straightforward to extend the same approach to the setting of number fields or, more generally, global fields (i.e., finite extensions of either $\Q$ or 
$\F_p(t)$\,). Throughout this paper the expression {\em global Dedekind domain} will be used as a shortening for the ring of $S$-integers of a global field: that  is, $D$ is a global Dedekind domain if its fraction field $F$ is a global field and there is a non-empty finite set $S$ of places of $F$ (containing the archimedean ones in the characteristic $0$ case) such that
$$D=\{x\in F\mid v_\nu(x)\ge0\;\forall\,\nu\notin S\}\,,$$
where $v_\nu$ is the valuation attached to $\nu$.

For the rest of this introduction, let $D$ be a global Dedekind domain. Then $D$ can be embedded in its profinite ring completion $\da$ - a compact topological ring endowed with a Haar measure $\mu$ (the construction of $\da$ and $\mu$ will be recalled in Sections \ref{s:prelim} and \ref{s:m&d} respectively). Moreover, the classical notion of asymptotic density of subsets of $\N$ can be extended (in many different ways) to subsets of $D^n$. So, letting $\xa$ denote the closure in $\da^n$ of $X\subseteq D^n$, one can ask about the comparison between $\mu(\xa)$ and the value at $X$ of a given density on $D^n$. More generally, one might ask what can be learned about $X$ by looking at $\xa$. Giving some answers to these questions is the main goal of this paper.

\subsection{Our results} Assume that $X$ is closed with respect to the topology induced by the embedding $D\hookrightarrow\da$. Then $X=D^n\cap\xa$, so that $X$ and $\xa$ are as close as 
possible. Our basic philosophy is that in this case finding out $\xa$ is the best way of assessing how large is $X$.

This is trivial when $X$ is finite (since then $X=\xa$). If $X$ is infinite, a first natural question is whether one has 
\begin{equation} \label{e:dmintr} \mu(\xa)=d(X) \end{equation} 
for some given density $d$. As mentioned before, there are many different possible definitions of density on $D^n$, so in \S\ref{sss:dns} we list some hypotheses that $d$ should satisfy for the comparison to make sense. 

Equality \eqref{e:dmintr} holds trivially when $\xa$ is compact and open (Lemma \ref{l:d=m0}) or, with little more work, when its boundary has measure $0$ (Lemma \ref{l:brtt}). These cases 
can be treated by Buck's measure theory, as done in \cite{buck} and the papers inspired by it, like \cite{lonied} or \cite{fptw} (we shall briefly summarise the main ideas in \S\ref{sss:Bd}; see also Remark \ref{r:bmsr} and Corollary \ref{c:bmsr}). On the other hand, we shall see that the approach of \cite{buck} cannot be applied to evaluate, for example, the size of the set of square-free integers - whose density $\frac{6}{\pi^2}$ is exactly what one would expect from \eqref{e:dmintr}. \footnote{The heuristics for computing the density of square-free integers is based on the fact that in an interval of length $x$ for every prime $p$ there are about $(1-p^{-2})x$ numbers which are not divisible by $p^2$. The decomposition $\za=\prod\Z_p$ (see Theorem \ref{t:crt} for a more precise statement) implies that the measure of square-free elements in $\za$ is $\prod_p(1-p^{-2})$.}\\

We have results in three directions.

\subsubsection{Sets of multiples and the Davenport-Erd\H os theorem} As far as we know, the first proof of a version of \eqref{e:dmintr} covering a vast generality of non-trivial cases 
was obtained by Davenport and Erd\H os in \cite{de1}, where they showed that sets of multiples have a logarithmic density. A set of multiples $U$ is basically a union of ideals, so taking the complement in $\Z$ of this union one gets a closed set $X$ and the computation in \cite{de1} yields exactly \eqref{e:dmintr}.

In Theorem \ref{t:de} we extend this result to any global Dedekind domain $D$. Our proof follows quite closely the one in \cite{de1}, where the main tool is the analytic density $\dsan$\,, which we define in \S\ref{sss:anltid} for subsets not of $D$, but of $\cali(D)$ (the set of non-zero ideals of our ring). In \S\ref{sss:sprn}, we show how to densely embed $\cali(D)$ into a compact topological space $\cals(D)$, whose elements we call supernatural ideals (since they generalize the notion of supernatural numbers). There is a natural surjection $\rho$ of $\da$ onto $\cals(D)$ and the measure $\mu$ can be push-forwarded to a measure $\rho_*\mu$ on $\cals(D)$. As for  the set  to be considered, we take as $U$ a union of open ideals in $\da$ and then we have to distinguish between $Y$ (the complement of $U$ in $\da$) and $X$ (the complement of $U$ in $D$): the examples in Remark \ref{r:XvY} show that in general one should expect $\xa$ to be properly contained in $\hat Y=Y$. The version of \eqref{e:dmintr} that we obtain in Theorem \ref{t:de} is $\rho_*\mu(Y)=\dsan(\rho(Y))$.

When $U$ is a union of pairwise coprime ideals ``with enough divisors'' we can prove $\xa=Y$ (Proposition \ref{p:appirr}) and in Theorem \ref{t:asdmltp} we show that under this hypothesis one has the equality of $\mu(\xa)$ with the asymptotic density of $X$ (defined in \S\ref{sss:asnt}). 

In our discussion of sets of multiples, we also prove that the set of integers divisible by at most $k$ distinct primes has density $0$ for every $k\in\N$. Actually, with our approach it is quite easy to show that the analogous statement holds in any global Dedekind domain and for every density satisfying the conditions in \S\ref{sss:dns} (see Proposition \ref{p:msromg} and Corollary \ref{c:lt2}).

\subsubsection{The local-to-global approach: Eulerian sets} The papers \cite{ekd} and \cite{ps1} brought forward a decisive progress in establishing new cases of \eqref{e:dmintr}. Ekedahl proved that \eqref{e:dmintr} is true if $X$ consists of the points in $\Z^n$ which stay away from an affine closed scheme $Z$ when reduced modulo $p$, as $p$ varies over all primes (\cite[Theorem 1.2]{ekd}). \footnote{In the same paper, there is also the claim that any open subset of $\za^n$ has density equal to its measure (\cite[Proposition 2.2]{ekd}). Unfortunately this statement and its consequence \cite[Theorem 2.3]{ekd} are wrong, as we are going to discuss in \S\ref{sss:cnt}.} Poonen and Stoll in \cite[Lemma 1]{ps1} isolated a sufficient (but maybe not necessary) criterion for \eqref{e:dmintr} to hold; this criterion (proved in \cite[Lemma 20]{ps2}) can also be used to simplify the proof of Ekedahl's theorem. Such results were later improved and extended by many authors: we cite, with no claim of exhaustiveness, \cite{bh}, \cite{bsw}, \cite{bbl}, \cite{mch} and \cite{cs}.

All these works take a local-to-global approach. Unique factorization yields a canonical isomorphism $\za\simeq\prod_p\Z_p$ (and, more generally, $\da\simeq\prod_\gotp\dap$, where $\gotp$ varies among all non-zero prime ideals of $D$ and $\dap$ denotes the completion in the $\gotp$-adic topology). In \cite{ekd}, \cite{ps1} and the literature following them, the idea is to start with a family of measurable subsets $U_\gotp\subseteq\dap^n$ and prove an equality of the form
\begin{equation} \label{e:dmintr2} \prod_\gotp\mu_\gotp(U_\gotp)=d(X)\,, \end{equation}
where $\mu_\gotp$ is the Haar measure on $\dap^n$ and
\begin{equation} \label{e:lclcnd} X=D^n\cap\prod U_\gotp\,. \end{equation}
We will show in Proposition \ref{p:d>mc} how in many important cases \eqref{e:dmintr2} implies 
\begin{equation} \label{e:elr0} \xa=\prod U_\gotp \end{equation}
and hence \eqref{e:dmintr}.

In Theorem \ref{t:mt} we give a necessary and sufficient condition for \eqref{e:dmintr} which refines and extends the Poonen-Stoll criterion (in Proposition \ref{p:ps} we explain how to derive the latter from the former). Our result is valid for every global Dedekind domain and for every density as in \S\ref{sss:dns}. The proof of Theorem \ref{t:mt} is not too different from those of extensions of \cite[Lemma 20]{ps2} already in the literature (like \cite[Lemma 3.1]{bbl}, \cite[Theorem 2.1]{mch} or \cite[Proposition 3.2]{cs}), but it might have some advantages in terms of generality and simplicity. In particular we do not require that $X$ is defined by local conditions at primes $\gotp$.

We say that $X$ is {\em Eulerian} if it satisfies \eqref{e:elr0} (Definition \ref{d:elr}, generalized to sets {\em of Eulerian type} in Definition \ref{d:elrtyp}). This notion can be seen as a somewhat more simple-minded extension of strong approximation: see \S\ref{sss:strapprx} for a brief discussion on how they relate. Eulerian sets are the ones for which it is easiest to compute $\mu(\xa)$ and to our knowledge they cover most of the examples where \eqref{e:dmintr} is known to be true. In Section \ref{s:elr} we discuss some conditions which ensure that a set is Eulerian without passing through \eqref{e:dmintr2}: for example, images of polynomials, tuples with a fixed greatest common divisor and $k$-free numbers form Eulerian sets (Proposition \ref{p:plyneulr}, Corollary \ref{c:cprpr} and Corollary \ref{c:kfrelr} respectively). We conclude the paper with some questions (yet too speculative for the name of conjecture) about what sets could be Eulerian and whether this can entail anything about their density.\\

We would like to note that in our work the emphasis is somehow shifted with respect to previous research: one could say that we take a ``global-to-local'' approach, in the sense that 
our interest is in $\xa$, pertaining to the global object $X$ rather than to the local conditions $\prod U_\gotp$. Even when $X$ is defined by \eqref{e:lclcnd}, there is no need for equality \eqref{e:elr0} to be true (just think of the case $D=\Z$, $U_\gotp=\Z_p^*$ for all $p$) and finding out the difference between its two sides can be of some interest. 

We also note that, letting $X(\gotp)$ be the closure of $X$ in $\dap^n$, one can think of the set-theoretic difference
$$\;\prod X(\gotp)-\xa$$
as a way of measuring the failure of the Hasse principle.

\begin{rmks} \begin{itemize} \item[]
\item[{\bf 1.}] For simplicity, in this paper we only work in the affine setting: that is, we only consider subsets of $D^n$. It would be interesting to extend our approach to subsets of the projective space $\p^n(F)$ (where $F$ is the fraction field of $D$) as in \cite[\S3.2]{bbl}.
\item[{\bf 2.}] Obviously our method fails to see local conditions determined by places of $S$ - for the most trivial example, just note that $\N$ and $\Z$ have the same closure $\za$, but are easy to distinguish via the order relation in $\R$. We don't think of this as a serious shortcoming, in the sense that if one has to add finitely many conditions from places in $S$, it is usually not too hard to dispose of them by other techniques after dealing with the infinitely many coming from primes of $D$. \end{itemize} \end{rmks}

\subsubsection{Sets of measure $0$ and the closure of primes} Equality \eqref{e:dmintr} is always true when $\mu(\xa)=0$ (Corollary \ref{c:m=0}). However, sets of density $0$ can still be quite large and assessing their size in a more precise way can be of great interest: for a typical example it is enough to think of $\calp$, the prime numbers in $\N$. Because of compactness, an infinite $X\subseteq D^n$ has always accumulation points in $\da^n$, so there is hope of learning something from $\xa$ even when $\mu(\xa)=0$.  This hope is confirmed in the case of $\calp$: it turns out that $\cpr$ contains $\za^*$, the group of units of $\za$, and that this inclusion is equivalent to the existence of infinitely many primes in arithmetic progressions. (As far as we know, this observation was first made by Lubotzky - see e.g. \cite[page 476]{lub}.)

We generalize this to any global Dedekind domain $D$ in Theorem \ref{t:chbt}. More precisely, we consider the sets $\Irr(D)$, consisting of those irreducible elements of $D$ which generate a prime ideal, and $\da^*$, the group of units of $\da$ (not to be confused with its subgroup $\wds$, which is the closure of $D^*$ in $\da$). We prove that the closure of $\Irr(D)$ in $\da$ is the disjoint union of $\da^*$ and a much smaller piece (the meaning of ``much smaller'' will be explained in \S\ref{sss:clD*}). As one could expect, the main tool for the proof is a weak version of the Chebotarev density theorem (Theorem \ref{t:rfr}).

Theorem \ref{t:chbt} suggests that the method of checking closures in $\da$ can also be applied to study, for example, regularity in the distribution of primes. We will not pursue the matter further in this paper \footnote{Interested readers can find some more considerations on this in \cite[\S3.3.2]{dlms}. An extensive discussion of this topic is in preparation (joint work with F.M.~Saettone).} and will be content to use this result just as a tool to determine $\xa$ in some other cases (Proposition \ref{p:appirr}). But we want to emphasize that Theorem \ref{t:chbt} gives a convincing argument in favour of our point: if $X$ is a closed subset of $D$ (or, at least, not too far  from being closed), then what one 
can learn about $X$ from $\xa$ goes well beyond equality \eqref{e:dmintr}.

\subsection{Structure of this paper} The contents of Sections \ref{s:clsirr}, \ref{s:anlt} and \ref{s:elr} have already been discussed in enough detail. Here we just want to add a few more words about the rest of the paper, in order to put it better in context and to help readers find their way through it.

In Section \ref{s:prelim} we set down the notation and establish a number of facts about $\da$ which we are going to use later. None of these results is new and all of them are well known to experts, who are advised to read just the parts needed to familiarize with our notations.

Section \ref{s:m&d} is dedicated to the comparison between the Haar measure on $\da$ and a general notion of density on $D$. We introduce both notions from scratch. There is not much to say on the measure and here we just remark that putting together Example \ref{eg:da*0} and Theorem \ref{t:chbt} one gets an interpretation of the value at $1$ of the Dedekind zeta function as the inverse of the Haar measure of the closure of $\Irr(D)$.

As for densities, in the literature one can find a vast number of different definitions over $\N$ - see for example \cite{gr} or \cite[Chapter III.1]{tnm} for gentle introductions. 
For the more general situation we are interested in, the literature is less huge, but still quite big and we make no attempt to survey it, just citing on occasion a few relevant papers. In the case of $\N$, a general definition of density was proposed in \cite{fga} and systems of axioms are discussed in \cite{fs}, \cite{gr} and (quite exhaustively) in \cite{lt}; the latter paper, which also considers the case of $\Z$, fits reasonably well with our needs (see Remark \ref{r:cfltaxm}). In \S\ref{sss:dns} we will give a working definition of a density on $D^n$, but readers should be aware that our goal is simply to isolate the properties it should satisfy for comparison with the Haar measure on $\da^n$. A number of examples of such densities will be discussed in full detail in the companion paper \cite{dl3}; in the present work, \S\ref{sss:Bd}-\S\ref{sss:anlt}, we just quickly touch upon some cases that will be needed later.

Finally, we point out that often we work with a Dedekind domain $D$ which is not necessarily global. We always ask that $D$ is countable and $\da$ is compact (Assumption \ref{a:cntb}), 
but these two conditions do not imply that the fraction field is global (Remark \ref{r:gldm}). It is not clear to us if these more relaxed hypotheses can be of much interest; however proofs are not burdened by such generalization and one can gain some conceptual clarity. \footnote{One could easily work in greater generality: see \cite{fptw} for an extension to any projective limit of compact rings or groups.} We will always state explicitly when we require $D$ to be a global Dedekind domain.
 
\subsubsection*{Style and readership} The subject of densities in $\N$ and its extensions to integers in number fields are of interest to a large number of mathematicians other than algebraic number theorists, so we thought it convenient to offer more explanations and proofs (especially in Section \ref{s:prelim}), in the hope of making our work accessible to a wider audience. As a consequence, algebraic number theorists might get bored by details on obvious facts, while readers with a different background might find our explanations too terse: we apologize to both groups.

\subsection*{Acknowledgments} This paper was started while the second author was visiting National Center for Theoretical Sciences, Mathematics Division, in Taipei and was completed while he was a guest of the Indian Institute of Science in Bangalore: he is grateful to both institutions for providing ideal conditions of work. Both authors also thank Ki-Seng Tan and National Taiwan University for the help in organizing a visit of the first author in Taipei to work on this paper.

We thank Paolo Leonetti for providing us with a preprint of \cite{lt} and Francesco Maria Saettone and Marc-Hubert Nicole for comments on a first draft of this paper. The second author thanks Lior Bary-Soroker for suggesting the idea used in the last counterexample in \S\ref{sss:cnt}. He also thanks Henri Darmon, Giuseppe Molteni, Alessandro Zaccagnini and Daniele Garrisi for useful discussions and he is grateful to Alejandro Vidal-L\'opez and Simon Lloyd for their patience in listening to him in too many occasions when thinking about this matter. Finally, he wishes to thank Marco Osmo for having awakened his interest in probabilistic number theory. Last but not least, thanks to the referee, whose suggestions resulted in a number of improvements (in particular, a better proof of Theorem \ref{t:chbt}).

\section{Notations and some prerequisites} \label{s:prelim}

\subsection{Basic notations and conventions} \label{ss:ntt} Throughout the paper, $D$ is a Dedekind domain and $F$ is its field of fractions. (The definition of Dedekind domain that we use is as in \cite[Chapter 9]{am} - in particular, the Krull dimension must be 1, so $D$ cannot be a field.) Ideals in $D$ will be usually denoted by German letters; in particular, $\gotp$ will always mean a non-zero prime ideal of $D$. When $F$ is a global field, we will write $F_\nu$ for the completion at a place $\nu$ and prime ideals of $D$ will often be tacitly identified with the corresponding place.

In \S\ref{sss:invlm} we will endow $D$ with a topology; the expression ``closed subset of $D$'' will always be used in reference to this topology. Also, finite sets will always have the discrete topology; products, quotients and limits of topological spaces will always be given the corresponding topology.

On various occasions we are going to take infinite sums or products over some countable set of indexes $J$. Usually we do not choose an ordering on $J$: the sum or product is meant to be taken as a limit with respect to the cofinite filter on $J$.

The cardinality of a set $X$ will be denoted by $|X|$ and we shall write $X-Y$ for the set-theoretic difference. Usually we will not distinguish between canonical isomorphisms and equalities.

See also \S\ref{sss:ntt} for more notations.

\subsection{The ring $\da$} Let $D$ be a Dedekind domain. We will denote the set of all non-zero ideals of $D$ by $\cali(D)$ and the subset of non-zero prime ideals by $\calp(D)$. For 
$\gotp\in\calp(D)$, let $v_\gotp\colon\cali(D)\rightarrow\N$ be the corresponding valuation, so that for $\gota\in\cali(D)$ we have the prime factorization $\prod_\gotp\gotp^{v_\gotp(\gota)}$.\\

\subsubsection{Inverse limits} \label{sss:invlm} We define
\begin{equation}\label{e:dfnda} \da:=\varprojlim D/\gota \end{equation}
where the limit is taken over all the non-zero ideals of $D$. We briefly recall what this means (for a more complete discussion, see e.g.~\cite[Chapter 10]{am}). For every $\gota\in\cali(D)$ let $\pi_\gota\colon D\rightarrow D/\gota$ be the reduction modulo $\gota$. There is a natural ring homomorphism
$$D\hooklongrightarrow\prod_{\gota\in\cali(D)}D/\gota$$
given by the product of all the maps $\pi_\gota$. (Injectivity  follows from the equality $\cap_{\cali(D)}\gota=\{0\}$.) Every quotient $D/\gota$ is given the discrete topology, making their product a Hausdorff topological space. \footnote{The topology is not strictly necessary for the construction of $\da$, but it is convenient for us to have it from the start.} Then $\da$ is defined as the closure of the image of $D$ in $\prod D/\gota$. The ring operations are defined componentwise on the product and they are continuous with respect to the product topology: as a consequence, $\da$ is a topological ring. Each $\pi_\gota$ extends by continuity to a ring homomorphism $\hpi_\gota\colon\da\twoheadrightarrow D/\gota$. A base of open sets of $\da$ is provided by 
\begin{equation}\label{e:base} \{\hpi_\gota^{-1}(x)\mid\gota\in\cali(D),x\in D/\gota\}\,.\end{equation}
By construction there is a canonical injection of $D$ into $\da$ and in the following we will always think of $D$ as a (dense) subring of $\da$.\\

For every prime ideal $\gotp$ in $D$, we also have the $\gotp$-adic completion
$$\dap:=\varprojlim D/\gotp^n$$
obtained by the same construction as in \eqref{e:dfnda} with $\cali(D)$ replaced by the set of powers of $\gotp$. It is well-known that for any $\gotp\neq0$ the ring $\dap$ is a complete discrete valuation domain.

\begin{thm} \label{t:crt} There is a canonical isomorphism of topological rings
\begin{equation}\label{e:crt} \da\simeq\prod_{\gotp\in\calp(D)}\dap \,,\end{equation}
where the product runs over all non-zero prime ideals of $D$. \end{thm}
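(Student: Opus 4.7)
The plan is to deduce the isomorphism from the Chinese Remainder Theorem (CRT), using that in a Dedekind domain every nonzero ideal factors uniquely as a finite product of prime powers. I will construct explicit mutually inverse continuous ring homomorphisms $\phi\colon\da\to\prod_\gotp\dap$ and $\psi\colon\prod_\gotp\dap\to\da$, and then verify the topological part.

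First I will define $\phi$. For every $\gotp\in\calp(D)$ and every $n\ge 1$, the ideal $\gotp^n$ belongs to $\cali(D)$, so we have the continuous projection $\hpi_{\gotp^n}\colon\da\twoheadrightarrow D/\gotp^n$ built in \S\ref{sss:invlm}. As $n$ varies, these are compatible with the transition maps $D/\gotp^{n+1}\to D/\gotp^n$, so by the universal property of the inverse limit they assemble into a continuous ring homomorphism $\da\to\dap$ for each prime $\gotp$. Taking the product over all primes yields the desired continuous ring homomorphism $\phi$.

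Next I will construct $\psi$. Given $(y_\gotp)\in\prod_\gotp\dap$ and any $\gota=\prod_\gotp\gotp^{e_\gotp}\in\cali(D)$ (a finite product by unique factorization), CRT provides a ring isomorphism $D/\gota\simeq\prod_{\gotp\mid\gota}D/\gotp^{e_\gotp}$. Each $y_\gotp$ has an image in $D/\gotp^{e_\gotp}$ via the structural projection of $\dap$, and the resulting tuple defines an element $x_\gota\in D/\gota$. The key check is that the family $(x_\gota)_{\gota\in\cali(D)}$ is compatible with respect to the transition maps $D/\gotb\to D/\gota$ whenever $\gota\mid\gotb$: under the CRT identification this map is, componentwise, just the projection $D/\gotp^{e_\gotp(\gotb)}\to D/\gotp^{e_\gotp(\gota)}$, and compatibility then follows from the coherence of each $y_\gotp$ as an element of $\dap$. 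Thus $(x_\gota)$ defines an element of $\da$, giving a ring homomorphism $\psi$. A short diagram chase shows $\phi\circ\psi=\id$ (the $\gotp^n$-component of $\psi((y_\gotp))$ is by construction the reduction of $y_\gotp$ modulo $\gotp^n$) and $\psi\circ\phi=\id$ (because two compatible families in $\da$ which agree on all prime-power ideals must agree everywhere, by CRT applied pointwise).

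Finally, for the topological part, I need that $\phi$ is a homeomorphism. It is already continuous by construction; I will show that $\psi$ is continuous as well. By \eqref{e:base}, it suffices to check that $\psi^{-1}\bigl(\hpi_\gota^{-1}(x)\bigr)$ is open in the product topology for every $\gota=\prod_\gotp\gotp^{e_\gotp}\in\cali(D)$ and $x\in D/\gota$. Writing $x=(x_\gotp)_{\gotp\mid\gota}$ via CRT, the construction of $\psi$ identifies this preimage with $\prod_{\gotp\mid\gota}U_\gotp\times\prod_{\gotp\nmid\gota}\dap$, where $U_\gotp$ is the preimage of $x_\gotp$ in $\dap$ under the canonical projection to $D/\gotp^{e_\gotp}$. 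Since only finitely many primes divide $\gota$ and each $U_\gotp$ is open in $\dap$, this set is a basic open set of the product topology. Hence $\psi$ is continuous, so $\phi$ is an isomorphism of topological rings.

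I do not anticipate a serious obstacle: the substantive content is entirely CRT plus unique factorization of ideals, and the rest is bookkeeping. The only delicate point is making sure the CRT decomposition at different levels of the inverse system fits together coherently, which is guaranteed because the isomorphisms $D/\gota\simeq\prod_{\gotp\mid\gota}D/\gotp^{e_\gotp}$ are natural in $\gota$.
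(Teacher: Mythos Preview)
Your proof is correct and follows essentially the same approach as the paper: both arguments rest on the Chinese Remainder Theorem together with the naturality of the isomorphisms $D/\gota\simeq\prod_{\gotp\mid\gota}D/\gotp^{v_\gotp(\gota)}$ with respect to divisibility, then pass to the inverse limit. The paper's sketch packages this as ``take the inverse limit on both sides of the commuting CRT diagrams'', whereas you unwind this into explicit mutually inverse continuous maps $\phi$ and $\psi$; the substance is the same.
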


\begin{skprf} If $\gotm$ divides $\gotn$ as ideals of $D$, we get a diagram
\begin{equation} \label{e:crtdg} \begin{CD} D/\gotn @>\sim>> \prod_\gotp D/\gotp^{v_\gotp(\gotn)}\\
@VVV @VVV\\
D/\gotm @>\sim>> \prod_\gotp D/\gotp^{v_\gotp(\gotm)}  \end{CD} \end{equation}
where the horizontal maps are the isomorphisms from the Chinese Remainder Theorem and the vertical maps are induced by the inclusion $\gotn\subset\gotm$. Diagram \eqref{e:crtdg} commutes and one concludes by taking the inverse limit on both sides. (This works by general abstract nonsense.) \end{skprf} 

In particular, \eqref{e:crt} implies that for every $\gotp$ there is a canonical projection $\hpi_{\gotp^\infty}\colon\da\rightarrow \dap$. We denote its kernel by $\gotp^\infty$. One can easily observe $\gotp^\infty=\cap_n\gotp^n\da\,,$ which justifies the notation. Actually, for every $\gotp$, the ring $\dap$ is endowed with a discrete valuation, which, composing with 
$\hpi_{\gotp^\infty}$, yields a valuation
$$v_\gotp\colon\da\longrightarrow\N\cup\{\infty\}\,.$$
The ideal $\gotp^\infty$ consists exactly of those $x\in\da$ such that $v_\gotp(x)=\infty$. Note also that the canonical injection of $D$ into $\dap$ factors via the map $\hpi_{\gotp^\infty}$.

\subsubsection{Closed ideals of $\da$} For any ideal $I\subset\da$, the quotient $\da/I$ inherits the topology from $\da$. By well-known properties of topological groups, $\da/I$ is Hausdorff if and only if $I$ is closed: therefore in the following we shall only consider closed ideals.

\begin{lem} \label{l:clpr} An ideal of $\da$ is closed if and only if it is principal. \end{lem}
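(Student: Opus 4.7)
The plan is to exploit the product decomposition $\da\simeq\prod_\gotp\dap$ from Theorem~\ref{t:crt}, which turns the problem into a structural question about ideals in a countable product of complete discrete valuation domains.

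For the ``if'' direction I would argue directly: multiplication by $x$ is a continuous self-map of $\da$, and by Assumption~\ref{a:cntb} the ring $\da$ is compact, so $x\da$ is the continuous image of a compact set in the Hausdorff space $\da$, hence closed.

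For the converse, let $I\subseteq\da$ be a closed ideal and set $I_\gotp:=\hpi_{\gotp^\infty}(I)\subseteq\dap$. Since each $\dap$ is a DVR, $I_\gotp=\gotp^{n_\gotp}\dap$ for some $n_\gotp\in\N\cup\{\infty\}$ (with $\infty$ encoding the zero ideal). The crux of the argument is the identity
$$I=\prod_\gotp I_\gotp\,,$$
where ``$\subseteq$'' is immediate. For ``$\supseteq$'', given $(x_\gotp)\in\prod_\gotp I_\gotp$ I would pick, for each $\gotp$, an element $y^{(\gotp)}\in I$ with $\hpi_{\gotp^\infty}(y^{(\gotp)})=x_\gotp$. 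Via the isomorphism of Theorem~\ref{t:crt} the ring $\da$ contains the ``coordinate idempotents'' $e_\gotp$, equal to $1$ in the $\gotp$-slot and $0$ elsewhere (concretely approximated by CRT: pick $d_\gota\in D$ with $d_\gota\equiv1\bmod\gotp^{v_\gotp(\gota)}$ and $d_\gota\equiv0\bmod\gotq^{v_\gotq(\gota)}$ for all other $\gotq\mid\gota$). For every finite $S\subseteq\calp(D)$ the element
$$z_S:=\sum_{\gotp\in S} e_\gotp\,y^{(\gotp)}$$
then lies in $I$ (it is an ideal), agrees with $(x_\gotp)$ in every coordinate indexed by $S$, and vanishes outside $S$. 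A check against the basis \eqref{e:base} shows that $z_S\to(x_\gotp)$ as $S$ exhausts the finite subsets of $\calp(D)$, so closedness of $I$ forces $(x_\gotp)\in I$.

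Once this structural description is in place, principality is formal: choose a uniformizer $\pi_\gotp$ of $\dap$ and let $x\in\da$ be the element whose $\gotp$-coordinate is $\pi_\gotp^{n_\gotp}$ (or $0$ when $n_\gotp=\infty$); then $x\da=\prod_\gotp\pi_\gotp^{n_\gotp}\dap=I$. The main obstacle is the ``$\supseteq$'' half of the product identity, where closedness is used essentially to upgrade finitely supported approximants to a genuine product element. That this hypothesis cannot be dropped is witnessed by $\sum_\gotp e_\gotp\da$, the ideal of elements of finite support in the product: it is a proper dense ideal, hence not closed, hence non-principal by the ``if'' direction already proved.
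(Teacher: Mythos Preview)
Your argument is correct and, for the substantive direction (closed $\Rightarrow$ principal), essentially identical to the paper's: both use the coordinate idempotents $e_\gotp$ (or $e_S$) to build finitely supported elements of $I$ that converge to the intended generator, invoking closedness at the limit. Your phrasing via $I=\prod_\gotp I_\gotp$ is a mild repackaging of the same content.

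The one genuine difference is in the easy direction. You deduce that $x\da$ is closed from compactness of $\da$, which relies on Assumption~\ref{a:cntb}; but in the paper this lemma is placed \emph{before} Assumption~\ref{a:cntb} is imposed, and the proof there works for an arbitrary Dedekind domain by a direct valuation argument (if $y\notin x\da$ then $v_\gotp(y)<v_\gotp(x)$ for some $\gotp$, so the open set $\hpi_{\gotp^{v_\gotp(y)+1}}^{-1}(\hpi_{\gotp^{v_\gotp(y)+1}}(y))$ separates $y$ from $x\da$). Your version is slicker but less general; the paper's avoids any finiteness hypothesis on residue fields.
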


\begin{prf} We first prove that principal ideals are closed, by showing that they have open complement. Let $x,y\in\da$ and assume $y\notin x\da$. Then there is 
some prime ideal $\gotp$ such that $v_\gotp(y)<v_\gotp(x)$. This implies that $x\da$ is contained in the kernel of
$$\hpi_{\gotp^{v_\gotp(y)+1}}\colon\da\longrightarrow D/\gotp^{v_\gotp(y)+1}$$
and $y$ is not. By definition of the topology, the image is discrete and the map is continuous: hence  the complement of $\ker(\hpi_{\gotp^{v_\gotp(y)+1}})$ is an open neighbourhood of $y$.

As for the converse, we start with the observation that by \eqref{e:crt} one can express $x\in\da$ as $x=(x_\gotp)_\gotp$, with $x_\gotp\in \dap$. For any subset $S$ of $\calp(D)$, let 
$e_S=(e_{S,\gotp})_\gotp$ be defined by 
$$e_{S,\gotp}:=\begin{cases}1\quad\text{ if }\gotp\in S\,;\\ 0\quad\text{ if }\gotp\notin S\,.    \end{cases}$$
Then $e_S\da$ is a subring of $\da$, isomorphic to $\prod_{\gotp\in S}\dap$\,.

Also, for every non-zero prime $\gotp$ choose $\tilde u_\gotp\in D$ satisfying $v_\gotp(\tilde u_\gotp)=1$ and put $u_\gotp:=e_{\{\gotp\}}\tilde u_\gotp$\,. The subring $e_{\{\gotp\}}\da$ is a discrete valuation domain having $u_\gotp$ as a uniformizer.
 
Let $I$ be any ideal of $\da$. By the above, $e_{\{\gotp\}}I$ is a principal ideal and we have $e_{\{\gotp\}}I=u_\gotp^{v_\gotp(I)}\da$ for some $v_\gotp(I)\in\N\cup\{\infty\}$. If $S$ is any finite set of non-zero primes, then the equality
$$e_S=\sum_{\gotp\in S}e_{\{\gotp\}}$$
implies
$$e_SI=\sum_{\gotp\in S}e_{\{\gotp\}}I=a_{S,I}\da\,,$$
with $a_{S,I}=\sum u_\gotp^{v_\gotp(I)}$. Moreover, $a_{S,I}\in I$, since $e_S\cdot I\subseteq I$.

Let $a_I\in\da$ be the point corresponding to $(\tilde u_\gotp^{v_\gotp(I)})_\gotp$ in the isomorphism \eqref{e:crt}. The inequality
$$v_\gotp(x)\ge v_\gotp(I)=v_\gotp(a_I)$$
holds for every $x\in I$ and every $\gotp$, proving the inclusion $I\subseteq a_I\da$.

Moreover $a_I$ is an accumulation point of the set $\{a_{S,I}\}$ (where $S$ varies among all finite subsets of non-zero primes). Indeed, let $U$ be any open neighbourhood of $a_I$. Without loss of generality, we can assume $U=\prod_\gotp U_\gotp$, where each $U_\gotp$ is open in $\dap$ and $U_\gotp=\dap$ for every $\gotp$ outside of a finite set $T$; but then $a_{S,I}\in U$ if $T\subseteq S$. If $I$ is closed this yields $a_I\in I$ and hence $a_I\da\subseteq I$. In the general case, one gets the equality $\widehat I=a_I\da$. \end{prf}

\begin{rmk} It might be worth observing that the key idea behind the proof of Lemma \ref{l:clpr} is the equality $1=\lim_Se_S$ (where the limit is taken with respect to the cofinite filter). \end{rmk}

\begin{eg} \label{eg:ncl} We provide an example of non-closed ideal of $\da$ (suggested by \cite[Theorem 1]{pst2}). Recall the isomorphism of topological rings \eqref{e:crt} and consider the set
$$I=\{(x_\gotp)_\gotp\in\da\mid x_\gotp=0\text{ for all but finitely many }\gotp\}\,.$$
It is immediate to check that $I$ is an ideal. By definition of the product topology, any open subset of $\da$ must contain a product $U=\prod_\gotp U_\gotp$, where every $U_\gotp$ is 
open in $\dap$ and $U_\gotp=\dap$ for almost all $\gotp$. Hence $U\cap I\neq\emptyset$, proving that $I$ is dense. Thus this ideal can be closed only if $I=\da$ and clearly this 
is not the case if $\calp(D)$ is infinite. On the other hand, if $D$ has only finitely many prime ideals, then one can take $S=\calp(D)$ in the proof of Lemma \ref{l:clpr} to show that 
every ideal of $\da$ is principal (and hence closed). \end{eg}

\subsubsection{Supernatural ideals} \label{sss:sprn} 

\begin{dfn} Let $\cals(D)$ be the set of all closed ideals of $\da$. We will call its elements the {\em supernatural ideals} of $D$. \end{dfn}

\begin{rmk} \label{r:sprnt} Elements of $\cals(\Z)$ are usually known as {\em supernatural numbers}: they can be described as formal products $\sigma=\prod_pp^{e_p}$, where the product is taken over all prime numbers and the exponents are allowed to take any value in $\N\cup\{\infty\}$. (Note that one can have $e_p\neq0$ for infinitely many $p$.) 

For a general $D$, it follows from the proof of Lemma \ref{l:clpr} that we can similarly write elements in $\cals(D)$ as $\sigma=\prod_\gotp\gotp^{e_\gotp}$, letting $\gotp$ vary in 
$\calp(D)$. If $\sigma=\gotp^\infty$, the map $\hpi_\sigma\colon\da\rightarrow\da/\sigma$ is exactly the projection $\hpi_{\gotp^\infty}\colon\da\rightarrow \dap$ that was described before.  \end{rmk}

The set of ideals of $D$ can be embedded in $\cals(D)$ by $\gota\mapsto\widehat\gota=\gota\da$. To lighten notation, in the following we will think of $\cali(D)$ as a subset of $\cals(D)$. Hence we shall often identify $\gota\in\cali(D)$ with $\widehat\gota\in\cals(D)$ as points in the space of supernatural ideals. (Of course we will distinguish between $\gota$ and $\widehat\gota$ as subsets of $\da$).

By Lemma \ref{l:clpr}, the map
\begin{equation} \label{e:ro} \rho\colon\da\longrightarrow\cals(D)\,,  \end{equation}
$x\mapsto x\da$, is surjective. Thus the set $\cals(D)$ inherits a topology as a quotient of $\da$. The notation
\begin{equation} \label{e:dflim0} \lim_{\sigma\rightarrow0} \end{equation}
will be used to mean that the limit is taken as $\sigma$ converges to $0$ in $\cals(D)$. (By a slight abuse of notation, we write $0$ to denote both the zero element in the ring $D$ and its image via $\rho$.) We remark that this topology makes $\cali(D)$ a dense subset of $\cals(D)$: actually, density can already be achieved just taking principal ideals, since $D-\{0\}$ is dense in $\da$. 

Moreover, $\cals(D)$ inherits a monoid structure from the product in $\da$, making $\cali(D)$ (with the usual product of ideals) a submonoid. There is also an order relation on $\cals(D)$, defined by
$$\sigma\le\tau\;\Longleftrightarrow\;\sigma|\tau\;\Longleftrightarrow\;\tau\da\subseteq\sigma$$ 
(following the usual convention for the order induced by divisibility of ideals in $D$).

\begin{eg} In the case $D=\Z$, the set $\N$ of natural numbers is a subset (in the obvious way) of the set of supernatural numbers $\cals(\Z)$. A basis of neighbourhoods of $0$ in the topological ring $\za$ is given by the set of ideals $\{a\za\}_{a\in\N}$\,: that is, $n$ gets closer and closer to $0$ as it has more and more divisors. So, if $(x_n)$ is a sequence taking values in a topological space $X$, the equality
$$\lim_{n\rightarrow0}x_n=y$$
means that for every neighbourhood $U$ of $y$ there is an $a\in\N$ such that $x_n\in U$ if $n\in a\za$ (equivalently, since both $a$ and $n$ are in $\N$, if $a$ divides $n$). \end{eg}

The valuation maps $\{v_\gotp\}$ descend in an obvious way to $\cals(D)$, by $v_\gotp(\prod\gotq^{e_\gotq})=e_\gotp$. We also define the support of $\sigma\in\cals(D)$ as
$$\supp(\sigma):=\{\gotp\mid v_\gotp(\sigma)>0\}$$
and the functions $\omega,\Omega\colon\cals(D)\rightarrow\N\cup\{\infty\}$,
\begin{equation} \label{e:ommin} \omega(\sigma):=|\supp(\sigma)| \end{equation}
and
\begin{equation} \label{e:Ommin} \Omega(\sigma):=\sum_\gotp v_\gotp(\sigma)\,, \end{equation}
which generalize the homonymous functions from elementary number theory. It might be worth to note the equality
$$\cali(D)=\{\sigma\in\cals(D)\mid\Omega(\sigma)<\infty\}\,.$$

\begin{rmk} \label{r:alxcmp} Give to $\N$ the discrete topology and let $\bar\N:=\N\cup\{\infty\}$ be its Alexandroff compactification. Remark \ref{r:sprnt} can be reformulated as saying that the valuation maps $\{v_\gotp\}$ induce a bijection $\sigma\mapsto\big(v_\gotp(\sigma)\big)_\gotp$ from $\cals(D)$ to $\bar\N^{\calp(D)}=\prod_{\gotp\in\calp(D)}\bar\N$. One can check that this map is actually a homeomorphism (where $\bar\N^{\calp(D)}$ has the product topology).

In the case $D=\Z$, this is the approach to supernatural numbers taken in the (very readable) paper \cite{pll}. Note also that supernatural numbers are a compactification of $\N$ (as 
emphasized in \cite{pll}) and that the latter is in natural bijection with the set of ideals of $\Z$. Under Assumption \ref{a:cntb} below, $\cals(D)$ is compact and we will think of it as a compactification of $\cali(D)$.  \end{rmk} 

\subsubsection{Compactness of $\da$} From now on, we shall assume the following hypothesis.

\begin{ass} \label{a:cntb}\begin{itemize}\item[]
\item[(A1)] $D$ is countable;
\item[(A2)] all non-zero ideals of $D$ have finite index.
\end{itemize}\end{ass}
\noindent From condition (A2) it follows that $\da$ is a compact topological ring. As for (A1), it implies that the set of all ideals of $D$ is also countable, because every ideal in a Dedekind domain can be generated by two elements. In particular also the set of prime ideals of $D$ is countable. (In the following, we shall usually assume that $D$ has infinitely many prime ideals.)

The two conditions together imply that $\da$ is a second-countable topological space (that is, it has a countable base). Indeed, cosets of ideals of $D$ induce a base for the topology on $\da$, as expressed in \eqref{e:base}.

\begin{rmk} \label{r:gldm} It is well-known that Assumption \ref{a:cntb} holds when $D$ is a global Dedekind domain or a localization of such, but it is interesting to note that the converse is false: there are countable, residually finite Dedekind domains whose fraction field is not global. A nice (although non constructive) example is provided by \cite[Theorem on page 114]{gld}. This result implies that there exists a ring $D$ containing $\Z[x]$ (where $x$ is a transcendental indeterminate) and having $\Q(x)$ as field of quotients, which is a Dedekind domain and satisfies (A2) (as for (A1), it is obvious since $\Q(x)$ is countable). \end{rmk}

For future reference, we recall some results about open ideals of $\da$. 

\begin{lem} \label{l:msr0} An ideal of $\da$ has finite index if and only if it is open. \end{lem}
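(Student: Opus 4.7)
The statement is an equivalence, so I would treat the two implications separately.

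$(\Leftarrow)$ is immediate: if $I$ is open then the quotient $\da/I$ is discrete (every coset of $I$ is open), and it is compact as a continuous image of the compact space $\da$ (compact by Assumption \ref{a:cntb}(A2)); a discrete compact space is finite.

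For $(\Rightarrow)$, my plan is to prove that $I$ contains an open ideal of the form $\ker\hpi_J$ for some $J\in\cali(D)$; then $I$, being a union of cosets of the open subgroup $\ker\hpi_J$, is itself open. The natural candidate is $J:=I\cap D$. If I can show $J\neq 0$, then (A2) ensures $J\in\cali(D)$; moreover, the factor-wise description of ideals provided by the isomorphism \eqref{e:crt} identifies $J\da$ with $\ker\hpi_J$, and since $I$ is a $\da$-ideal containing $J$ one gets $I\supseteq J\da=\ker\hpi_J$, which is open by continuity of $\hpi_J$ into the finite discrete set $D/J$.

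The only step requiring an argument is therefore $J\neq 0$, and here a pigeonhole suffices. Pick any non-zero non-unit $t\in D$, which exists because $D$, being a Dedekind domain, is by convention not a field. The powers $t^0,t^1,t^2,\ldots$ are pairwise distinct, since $t^i=t^j$ with $i<j$ would force $t^{j-i}=1$ by cancellation in the domain $D$, contradicting non-invertibility of $t$. Hence $D$ is infinite, whereas $\da/I$ has only $[\da:I]<\infty$ elements, so the composition $D\hookrightarrow\da\twoheadrightarrow\da/I$ fails to be injective. Any two distinct elements of $D$ with equal image produce a non-zero element of $J$. I do not foresee any serious obstacle; the argument is essentially an application of the profinite structure together with a counting trick.
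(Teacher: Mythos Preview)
Your proof is correct and follows essentially the same route as the paper's: both directions are handled identically, with the $(\Rightarrow)$ implication obtained by showing that $J=I\cap D$ is non-zero (via the pigeonhole $D$ infinite, $\da/I$ finite) and then that $I\supseteq J\da$, which is open. Your argument is slightly more detailed in justifying that $D$ is infinite and in identifying $J\da$ with $\ker\hpi_J$, but the substance is the same.
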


\begin{prf} Since $\da$ is a topological ring, the map $x\mapsto x+a$ is a homeomorphism for every $a\in\da$. Thus if an ideal $I$ is open then so are all its cosets. Hence $\da/I$ is a discrete compact topological space, which must be finite. 

Vice versa, if $I$ is an ideal of $\da$ of finite index, then the natural map $D\rightarrow\da/I$ must have a non-trivial kernel $\gota$, because $D$ is infinite. Since $I$ is an ideal of $\da$ and $\gota\subset I$, also the open ideal $\widehat\gota=\gota\da$ must be in $I$. Being a union of open cosets of $\widehat\gota$, $I$ must be open. \end{prf}

\begin{lem} \label{l:opn} Every open ideal of $\da$ is the completion of an ideal of $D$ with the same index. \end{lem}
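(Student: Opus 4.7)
The plan is to take $\gota := I \cap D$, show that $D/\gota \cong \da/I$, and then compare indices to get $\gota\da = I$.

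First I would observe that since $I$ is open in $\da$, the quotient $\da/I$ is both discrete and (by Lemma \ref{l:msr0}) finite. The natural map $D \to \da/I$ has kernel $\gota = I\cap D$, so it factors as an injection $D/\gota \hookrightarrow \da/I$. The key point is that this injection is also surjective: indeed, for any $x \in \da$, the coset $x + I$ is an open subset of $\da$, and since $D$ is dense in $\da$, it must meet $D$. Thus every class in $\da/I$ has a representative in $D$, so $D/\gota \cong \da/I$. In particular $[D:\gota]=[\da:I]<\infty$, so $\gota$ is a non-zero ideal of $D$.

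Next I would compare $\gota\da$ and $I$. The inclusion $\gota \subseteq I$ together with the fact that $I$ is an ideal of $\da$ gives $\gota\da \subseteq I$. To get equality, I would use the definition of $\da$ as the inverse limit: the reduction map $\pi_\gota: D \to D/\gota$ extends to the continuous surjection $\hpi_\gota: \da \twoheadrightarrow D/\gota$ whose kernel is precisely $\gota\da$. Therefore $\da/\gota\da \cong D/\gota$, which yields $[\da : \gota\da] = [D:\gota] = [\da : I]$. Since $\gota\da$ and $I$ are both ideals of $\da$ with $\gota\da \subseteq I$ and the same finite index, they must coincide. This proves both that $I = \hat\gota$ and that the indices match.

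I do not expect a significant obstacle here: the argument is essentially a repackaging of two standard facts, namely density of $D$ in $\da$ (used to identify $\da/I$ with $D/\gota$) and the universal property of the completion (used to identify $\da/\gota\da$ with $D/\gota$). The only point worth checking carefully is that a discrete quotient of $\da$ by an open ideal is automatically finite, which is exactly what Lemma \ref{l:msr0} provides, and that density combined with openness of each coset gives surjectivity onto the quotient — a routine but essential observation.
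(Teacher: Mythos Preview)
Your proof is correct and essentially matches the paper's: both take $\gota = I \cap D$ and use density of $D$ in $\da$ (together with openness of the cosets of $I$) to obtain $D/\gota \cong \da/I$. The only cosmetic difference is in the final step---the paper observes directly that $\gota$ is dense in the closed set $I$, hence $\hat\gota = I$, whereas you compare indices to conclude $\gota\da = I$; these are equivalent since $\hat\gota = \gota\da$.
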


\begin{prf} Let $I$ be an open ideal of $\da$. Then $I$ is closed, since all its cosets are open. The set $D\cap I$ is an ideal of $D$ and it is dense in $I$ (because $D$ is dense in $\da$ and $I$ is open). Finally, we get $D/(I\cap D)\simeq\da/I$ observing that the image  of $D$ in $\da/I$ is dense. \end{prf}

\begin{lem} \label{l:cmpopn} A subset of $\da$ is both open and closed if and only if it is a finite union of cosets of some open ideal $\widehat\gota$. \end{lem}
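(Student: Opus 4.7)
The plan is to prove each direction separately, with the forward one being a straightforward consequence of the topological ring structure and the reverse one using compactness of $\da$ together with \eqref{e:base}.

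For the easy direction, suppose $A = \bigcup_{i=1}^m (a_i + \hat\gota)$ for some $\gota\in\cali(D)$ and $a_1,\dots,a_m\in\da$. By Assumption (A2), $\hat\gota$ is open (it is the kernel of the continuous map $\hpi_\gota$ to the discrete finite ring $D/\gota$), so each coset $a_i+\hat\gota$ is open, being a translate of $\hat\gota$ under the homeomorphism $x\mapsto x+a_i$. The complement of $a_i+\hat\gota$ in $\da$ is the finite union of the other cosets of $\hat\gota$, hence open, so $a_i+\hat\gota$ is also closed. Therefore $A$, being a finite union of clopen sets, is clopen.

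For the converse, let $A\subseteq\da$ be both open and closed. Since $\da$ is compact (by Assumption (A2), as noted in the text), the closed set $A$ is compact. Because $A$ is open, every $a\in A$ has a neighbourhood from the base \eqref{e:base}, i.e.\ a set of the form $\hpi_{\gota_a}^{-1}(x_a)$ with $\gota_a\in\cali(D)$ and $x_a\in D/\gota_a$, contained in $A$. Each such basic neighbourhood is a coset of $\ker(\hpi_{\gota_a})=\hat\gota_a$. By compactness of $A$, finitely many of these suffice: $A=\bigcup_{i=1}^n(b_i+\hat\gota_i)$ for some $\gota_1,\dots,\gota_n\in\cali(D)$ and $b_i\in\da$.

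Now set $\gota:=\gota_1\cap\cdots\cap\gota_n$. This is a non-zero ideal of $D$ (as a finite intersection of non-zero ideals of the Dedekind domain $D$), and it has finite index since each $\gota_i$ does, so $\gota\in\cali(D)$. For each $i$ we have $\hat\gota\subseteq\hat\gota_i$, and consequently $b_i+\hat\gota_i$ decomposes as a finite disjoint union of cosets of $\hat\gota$ (one for each element of the finite group $\hat\gota_i/\hat\gota$). Substituting these decompositions into $A=\bigcup_{i=1}^n(b_i+\hat\gota_i)$ exhibits $A$ as a finite union of cosets of the open ideal $\hat\gota$, completing the proof. The only genuinely non-formal input is compactness, and the only mild subtlety is the final step of replacing the several ideals $\hat\gota_i$ by their common refinement $\hat\gota$, which is why Assumption (A2) is essential.
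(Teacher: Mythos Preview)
Your proof is correct and follows essentially the same route as the paper: use compactness to extract a finite cover by basic open sets $\hpi_{\gota_i}^{-1}(x_i)$, then pass to the common refinement $\gota=\cap_i\gota_i$. You simply give more detail than the paper (which omits the easy direction and the verification that cosets of $\hat\gota_i$ decompose into cosets of $\hat\gota$), but the argument is the same.
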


\begin{prf} Assume $C\subset\da$ is both open and closed. Being open, it is a union of subsets of the base \eqref{e:base}. Being closed, it is also compact and thus a finite union is enough. If $C=\cup_{i=1}^n\hpi_{\gota_i}^{-1}(x_i)$, then one can take $\gota=\cap\gota_i$ to get $C=\hpi_\gota^{-1}(A)$ for some $A\subseteq D/\gota$. \end{prf}

\begin{rmk} Under Assumption \ref{a:cntb}, it is possible to define a metric on $D$ so that $\da$ is the completion with respect to it. We mention this because some readers might feel more 
comfortable thinking of $\da$ as a metric space. However the topological structure is sufficient for the goals of this paper and it provides more straightforward arguments than those we would get from imposing a distance. \end{rmk}

\subsubsection{Some notations} \label{sss:ntt} The following notations shall be used throughout this paper: \begin{itemize}
\item for $\sigma\in\cals(D)$ and $n$ any positive integer, 
$$\hpi_\sigma\colon\da^n\longrightarrow(\da/\sigma)^n$$
is the natural projection;
\item for $\sigma\in\cals(D)$ and $X\subseteq\da^n$,
\begin{equation} \label{e:dfXsgm} X_\sigma:=\hpi_\sigma^{-1}\big(\overline{\hpi_\sigma(X)}\big)\,, \end{equation}
where $\overline{\hpi_\sigma(X)}$ is the closure of $\hpi_\sigma(X)$ in $(\da/\sigma)^n$ (with respect to the quotient topology);
\item $\xa$ is the closure of $X\subseteq\da^n$
\item for $I$ any ideal of $\da$, its index is denoted
\begin{equation} \label{e:indx} \|I\|:=|\da/I|\,\in\,\N\cup\{\infty\}\; \end{equation}
and given $a\in\da$ we use the shortening $\|a\|$ for $\|a\da\|$;
\item $\da^*$ is the group of units of $\da$ (not to be confused with $\wds$, the closure of $D^*$ in $\da$). \end{itemize}

In the case of an ideal $\gota$ of $D$, it is easy to check that one has $\widehat\gota=\gota\da$; moreover the equality
$$D/\gota=\da/\,\widehat\gota$$
holds for every $\gota\in\cali(D)$. Because of the embedding $\cali(D)\hookrightarrow\cals(D)$, we write $\hpi_\gota$ for $\hpi_{\widehat\gota}$ and we have $X_\gota=\hpi_\gota^{-1}(\hpi_\gota(X))$.

We shall occasionally use $\pi_\gota$ to abbreviate $\hpi_\gota|_{D^n}$\,.  

\begin{eg} Readers might attain a better grasp on these objects by observing that if $n=1$ then $X_\gota\cap D$ is simply a disjoint union of cosets of $\gota$. In the $D=\Z$, $n=1$ case, this is a union of arithmetic progressions. When $X=\{x\}$ is a singleton, we have
$$X_\gota=\prod_\gotp(x+\gota \dap^n)=\prod_{\gotp|\gota}(x+\gotp^{v_\gotp(\gota)}\dap^n)\times\prod_{\gotp\nmid\gota}\dap^n$$
for any $n$ and any $\gota$. Another example which is useful to have in mind is the following: fix a prime $\gotp$ and let $X(\gotp)$ denote the closure of $X$ in $\dap^n$. Then
\begin{equation} \label{e:Xpinf} X_{\gotp^\infty}=\hpi_{\gotp^\infty}^{-1}\big(X(\gotp)\big)=X(\gotp)\times\prod_{\gotq\neq\gotp}\hat D_\gotq^n \end{equation}
because $\ker(\hpi_{\gotp^\infty})=\{0\}\times\prod\hat D_\gotq^n$.  \end{eg}

\begin{rmk} By Lemmata \ref{l:msr0} and \ref{l:opn}, we obtain $\|I\|=\infty$ unless $I=\widehat\gota$ for some $\gota\in\cali(D)$, in which case 
$$\|\widehat\gota\|=|\da/\,\widehat\gota|=|D/\gota|\,.$$
\end{rmk}

Note also that the divisibility relation $\sigma|\tau$ implies $X_\tau\subseteq X_\sigma$ for every $X\subseteq\da^n$. 

\subsubsection{Closures in $\da^n$} Let $n$ be a positive integer. From now on we shall consider subsets of $D^n$ and $\da^n$. Note that the family \eqref{e:base} provides a base for the product topology on $\da^n$ for every $n$ (with the obvious change, with respect to \eqref{e:base}, of the domain of $\hpi_\gota$ from $\da$ to $\da^n$).

\begin{lem} \label{l:clsr} Let $\calt$ be a subset of $\cals(D)$. Then for every subset $X$ of $\da^n$, we have 
\begin{equation} \label{e:clsr} \xa=\bigcap_{\sigma\in\calt}X_\sigma\, \end{equation}
if $0$ is an accumulation point of $\calt$. \end{lem}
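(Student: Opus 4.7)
I would prove the two inclusions separately. The inclusion $\xa\subseteq\bigcap_{\sigma\in\calt}X_\sigma$ is immediate: by continuity of $\hpi_\sigma$, each set $X_\sigma=\hpi_\sigma^{-1}\bigl(\overline{\hpi_\sigma(X)}\bigr)$ is closed in $\da^n$ and contains $X$ by construction, hence it contains $\xa$.

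For the opposite inclusion, I would take $y\in\bigcap_{\sigma\in\calt}X_\sigma$ and verify that every basic open neighbourhood of $y$ meets $X$. By the description of the base \eqref{e:base} extended to $\da^n$, it is enough to consider neighbourhoods of the form $U=\hpi_\gota^{-1}(\hpi_\gota(y))$ with $\gota\in\cali(D)$. The key preliminary observation is that, via the homeomorphism of Remark \ref{r:alxcmp}, a neighbourhood basis of $0\in\cals(D)$ is provided by the sets $\{\tau\in\cals(D)\mid\gota\mid\tau\}$ as $\gota$ ranges over $\cali(D)$. Hence the hypothesis that $0$ is an accumulation point of $\calt$ yields some $\sigma\in\calt$ with $\gota\mid\sigma$, equivalently $\sigma\subseteq\hat\gota$ as closed ideals of $\da$.

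This inclusion $\sigma\subseteq\hat\gota$ means that $\hpi_\gota$ factors through $\hpi_\sigma$ via a continuous surjection $\pi\colon(\da/\sigma)^n\sri(D/\gota)^n$. Since the target is finite (by Assumption \ref{a:cntb}) and hence discrete, continuity of $\pi$ forces
\[ \pi\bigl(\overline{\hpi_\sigma(X)}\bigr)\subseteq\overline{\pi(\hpi_\sigma(X))}=\pi(\hpi_\sigma(X))=\hpi_\gota(X)\,. \]
Combining this with $y\in X_\sigma$, i.e.\ $\hpi_\sigma(y)\in\overline{\hpi_\sigma(X)}$, I obtain $\hpi_\gota(y)\in\hpi_\gota(X)$: there exists $x\in X$ with $\hpi_\gota(x)=\hpi_\gota(y)$, so $x\in U\cap X$ and consequently $y\in\xa$.

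The only conceptually nontrivial point is the identification of the neighbourhood basis of $0\in\cals(D)$ with the divisibility filter at $0$; once that is in place, everything else is a routine application of continuity together with the discreteness of $(D/\gota)^n$, so I don't foresee any serious obstacle beyond being careful about the interplay between the quotient topology on $\cals(D)$ and the product-topology picture provided by Remark \ref{r:alxcmp}.
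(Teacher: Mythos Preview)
Your proposal is correct and follows essentially the same approach as the paper: both arguments hinge on the observation that the hypothesis on $\calt$ guarantees, for each $\gota\in\cali(D)$, some $\sigma\in\calt$ with $\sigma\subseteq\hat\gota$, and then use the factorization of $\hpi_\gota$ through $\hpi_\sigma$. The only differences are cosmetic: the paper proves the reverse inclusion by contrapositive (starting from $z\notin\xa$), whereas you argue directly; and you are more explicit about why $\hpi_\sigma(y)\in\overline{\hpi_\sigma(X)}$ forces $\hpi_\gota(y)\in\hpi_\gota(X)$, invoking the discreteness of $(D/\gota)^n$, a step the paper leaves implicit.
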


\begin{prf} By definition each $X_\sigma$ is a closed set containing $X$. Hence $\xa$ is contained in the intersection on the right-hand side of \eqref{e:clsr}.

Vice versa, let $z\in\da^n$ be in the complement of $\xa$. By definition of the topology on $\da^n$, there is an ideal $\gota$ of $D$ such that $(z+\gota\da^n)\cap\xa=\emptyset$ - that is, $\hpi_\gota(z)\notin\hpi_\gota(X)$. The assumption on $\calt$ implies that there is some $\sigma\in\calt$ such that $\sigma\subseteq\widehat\gota$. Hence $\hpi_\sigma(z)\notin\hpi_\sigma(X)$, so $z\notin X_\sigma$. This shows that $z$ is not in the right-hand side of \eqref{e:clsr}.  \end{prf}

\begin{rmks} \label{r:idl} \begin{itemize} \item[]
\item[{\bf 1.}] Recall that ideals of $D$ form a dense subset of $\cals(D)$. Therefore in Lemma \ref{l:clsr} one can take $\calt=\cali(D)$ to get $\xa=\cap X_\gota$.
\item[{\bf 2.}] From Assumption \ref{a:cntb} we get that $\cals(D)$ is second-countable. In particular, $0$ has a countable neighbourhood basis: it follows that for any $\calt$ having $0$ as an accumulation point there is a countable set $\calt'\subseteq\calt$ having the same property. \end{itemize} \end{rmks}

The following immediate consequence of Lemma \ref{l:clsr} provides an easy way of checking if a set is dense.

\begin{cor} A subset $X$ of $\da^n$ is dense if there is some $\calt\subseteq\cals(D)$ having $0$ as an accumulation point and such that $X_\sigma=\da^n$ for every $\sigma\in\calt$. Vice versa, if $X$ is dense then $X_\sigma=\da^n$ for every $\sigma\in\cals(D)$. \end{cor}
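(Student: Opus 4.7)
The plan is to read off both implications directly from Lemma \ref{l:clsr}, with essentially no additional machinery required.

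First I would handle the sufficiency direction. Assuming $\calt$ has $0$ as an accumulation point, Lemma \ref{l:clsr} applies and gives
\[
\xa \;=\; \bigcap_{\sigma\in\calt} X_\sigma.
\]
Under the hypothesis that each $X_\sigma$ equals $\da^n$, the intersection collapses to $\da^n$, so $\xa=\da^n$ and $X$ is dense.

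For the converse I would fix an arbitrary $\sigma\in\cals(D)$ and recall that the quotient map $\hpi_\sigma\colon\da^n\to(\da/\sigma)^n$ is, by construction, a continuous surjection of topological spaces. Since a continuous surjection sends dense sets to dense sets, density of $X$ in $\da^n$ forces $\hpi_\sigma(X)$ to be dense in $(\da/\sigma)^n$. Hence $\overline{\hpi_\sigma(X)}=(\da/\sigma)^n$, and taking preimages under $\hpi_\sigma$ yields $X_\sigma=\hpi_\sigma^{-1}\bigl((\da/\sigma)^n\bigr)=\da^n$, as desired.

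I do not anticipate a real obstacle: the statement is an essentially automatic corollary of Lemma \ref{l:clsr} together with the continuity and surjectivity of the projections $\hpi_\sigma$. The only small check worth flagging is that the hypothesis on $\calt$ (having $0$ as an accumulation point) is precisely what is needed to invoke Lemma \ref{l:clsr}, and that for the converse no such hypothesis on $\sigma$ is required, because the statement is asked to hold for every $\sigma\in\cals(D)$ individually.
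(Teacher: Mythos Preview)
Your proposal is correct and follows essentially the same approach as the paper, which presents the corollary as an immediate consequence of Lemma \ref{l:clsr}. For the converse, the paper's implicit argument (from the first line of the proof of Lemma \ref{l:clsr}) is that each $X_\sigma$ is a closed set containing $X$, hence $\xa\subseteq X_\sigma$; your variant via continuity and surjectivity of $\hpi_\sigma$ is an equally trivial rephrasing of the same fact.
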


For example, a subset of $\N$ is dense in $\za$ if and only if it surjects onto $\Z/n\Z$ for every $n$.

\section{The closure of prime irreducibles} \label{s:clsirr} In this section we assume that $D$ is a global Dedekind domain.\\

Let $\Irr(D)$ denote the set of the prime elements of $D$, i.e., those irreducible elements which generate a prime ideal: that is,
$$\Irr(D):=\big\{x\in D\mid\Omega(x\da)=1\big\}$$
where $\Omega$, as defined in \eqref{e:Ommin}, is the function counting prime divisors of a supernatural ideal (with multiplicity). Note that one can write
\begin{equation} \label{e:irrdp} \Irr(D)=\bigsqcup_{\gotp\in\calp(D)}\Irr(\gotp)\,, \end{equation}
where $\Irr(\gotp):=\{p\in D\mid pD=\gotp\}$ is empty if $\gotp$ is not principal. We will compute the closure of $\Irr(D)$ in $\da$.

\begin{rmk} For a general Dedekind domain, the set $\Irr(D)$ might be empty, even when Assumption \ref{a:cntb} holds: actually, as shown in \cite[Example 1-5]{clab}, one can start with any $D$ and localize it so to kill all principal prime ideals and preserve the class group. \end{rmk}

Let $\wds $ denote the closure of $D^*$ in $\da$. We shall need
$$\wds\Irr(D)=\{ux\mid u\in\wds , x\in\Irr(D)\}.$$

\begin{thm} \label{t:chbt} If $D$ is a global Dedekind domain, then
\begin{equation} \label{e:clirr} \widehat{\Irr(D)}=\da^*\sqcup\wds\Irr(D)\,. \end{equation} \end{thm}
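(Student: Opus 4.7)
The strategy is to establish the two inclusions separately and then verify disjointness. Disjointness $\da^* \cap \Irrb(D) = \emptyset$ is immediate from valuations: since $\da^* = \prod_\gotq \hat D_\gotq^*$ is closed in $\da$, we have $\wds \subseteq \da^*$, and every $u \in \wds$ satisfies $v_\gotq(u) = 0$ for all $\gotq$; hence for $y = up$ with $(p) = \gotp$ one finds $v_\gotp(y) = 1$, preventing $y \in \da^*$.

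The inclusion $\Irrb(D) \subseteq \widehat{\Irr(D)}$ will follow from continuity: $\Irr(D)$ is stable under multiplication by $D^*$ (if $u \in D^*$ and $p \in \Irr(\gotp)$ then $(up) = \gotp$, so $up \in \Irr(D)$), so for any $p \in \Irr(D)$ and any $u_n \in D^*$ converging to $u \in \wds$ the sequence $u_n p \in \Irr(D)$ converges to $up$. The deeper inclusion $\da^* \subseteq \widehat{\Irr(D)}$ will rely on the Chebotarev density theorem. Given $y \in \da^*$ and a basic neighbourhood $y + \gota\da$ with $\gota \in \cali(D)$, we lift $\hpi_\gota(y) \in (D/\gota)^*$ to some $z \in D$ coprime to $\gota$ and consider the class of $(z)$ in the ray class group $\cls_\gota(D)$. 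By Chebotarev this class contains a prime ideal $\gotp$; writing $\gotp \cdot (z)^{-1} = (w)$ with $w \equiv 1 \pmod \gota$, the element $p := wz$ then lies in $\Irr(D)$ and satisfies $p \equiv z \equiv y \pmod \gota$.

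For the reverse inclusion, let $y = \lim p_n$ with $p_n \in \Irr(D)$. Convergence in $\da \cong \prod_\gotq \hat D_\gotq$ gives $v_\gotp(p_n - y) \to \infty$ for each fixed $\gotp$, so the ultrametric yields $v_\gotp(p_n) = v_\gotp(y)$ for $n$ sufficiently large; combined with the bound $v_\gotp(p_n) \leq 1$ (since $(p_n)$ is prime), this forces $v_\gotp(y) \in \{0, 1\}$. Moreover, if $v_\gotp(y) = 1$ then eventually $(p_n) = \gotp$, which simultaneously makes $\gotp$ principal and rules out any other prime with valuation $1$. If no such $\gotp$ exists then $y \in \da^*$; otherwise, fix a generator $p_0 \in \Irr(\gotp)$ and write $p_n = u_n p_0$ with $u_n \in D^*$ for $n$ large. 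Comparing valuations in each factor of $\da$ shows that $y = u p_0$ for some $u \in \da^*$, and the convergence $(u_n - u) p_0 \to 0$ transfers to $u_n \to u$ componentwise: for $\gotq \neq \gotp$ the element $p_0$ is a unit in $\hat D_\gotq$ so convergence is immediate, while in $\hat D_\gotp$ multiplication by the uniformizer $p_0$ merely shifts the $\gotp$-adic valuation. Hence $u \in \wds$ and $y = u p_0 \in \Irrb(D)$.

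The main obstacle is the Chebotarev step, which supplies the arithmetic content and is the one place where the global field hypothesis is genuinely used; everything else is topological manipulation of valuations and the product decomposition $\da \cong \prod_\gotp \hat D_\gotp$.
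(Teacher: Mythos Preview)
Your proof is correct and follows the same overall strategy as the paper: disjointness via valuations, $\Irrb(D)\subseteq\widehat{\Irr(D)}$ by continuity of multiplication, $\da^*\subseteq\widehat{\Irr(D)}$ via Chebotarev, and the reverse inclusion by analysing valuations.

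The packaging differs in two places. For the hard inclusion $\da^*\subseteq\widehat{\Irr(D)}$, you invoke Chebotarev directly on the ray class group $\cls_\gota(D)$: find a prime $\gotp$ in the ray class of $(z)$, whence $\gotp=(wz)$ with $w\equiv 1\pmod\gota$, and $wz\in\Irr(D)$ hits the required residue. The paper instead builds the idelic reciprocity map and the ray class field $H_\gota$ explicitly (their Lemma~3.4), tracking Frobenius elements through the diagram~\eqref{e:hlbrt}. Your route is more economical; the paper's idelic computation is essentially reconstructing the ray class correspondence from scratch. For the reverse inclusion, you work with sequential limits (legitimate since $\da$ is first-countable) and a componentwise convergence argument for $u_n\to u$; the paper argues more directly by exhibiting, for each $x\notin\da^*\cup\Irrb(D)$, an explicit neighbourhood ($\gotp\gotq\da$, or $\hat\gotp$ for non-principal $\gotp$) that meets $\Irr(D)$ only in a single $D^*$-orbit. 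The paper's argument here is slightly slicker, but both are fine.
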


\begin{prf} The right-hand side of \eqref{e:clirr} is indeed a disjoint union, because one has
$$\Omega(x\da)=\begin{cases} 0 \;\;\text{ if }x\in\da^*\\ 1 \;\;\text{ if }x\in\wds\Irr(D)\,. \end{cases}$$

The easy part of proving \eqref{e:clirr} is to show the inclusions
$$\wds\Irr(D)\subseteq\widehat{\Irr(D)}\subseteq\da^*\cup\wds\Irr(D)\,,$$ 
which amount to the statements \begin{itemize}
\item[\bf Irr1\,:] every element in $\wds\Irr(D)-\Irr(D)$ is a limit point of $\Irr(D)$;
\item[\bf Irr2\,:] $\Irr(D)$ has no accumulation point outside $\da^*\cup\wds\Irr(D)$. \end{itemize}

The claim {\bf Irr1} is obvious: if $\Irr(\gotp)$ is not empty then it is a $D^*$-orbit and \eqref{e:irrdp} shows that $\wds\Irr(D)$ is the union of the closures of these orbits.

As for {\bf Irr2}, we show that $x\in\da-\da^*$ can be an accumulation point of $\Irr(D)$ only if it is in $\wds\Irr(D)$. If $\Omega(x\da)>1$, then there are two prime ideals $\gotp$ and 
$\gotq$ (possibly equal) such that $x\in\gotp\gotq\da$; but then $\gotp\gotq\da$ is a neighbourhood of $x$ and clearly it cannot contain any element of $\Irr(D)$. So we can assume $\Omega(x\da)=1$. In this case $x\da=\widehat\gotp$ for some $\gotp\in\calp(D)$, which implies that $\widehat\gotp$ is a neighbourhood of $x$. If $\gotp$ is not principal then $\widehat\gotp$ has empty intersection with $\Irr(D)$. Therefore $x$ can be an accumulation point of $\Irr(D)$ only if $\widehat\gotp=p\da$ for some $p\in\Irr(D)$. But then $\widehat\gotp\cap\Irr(D)=D^*p$, so if $x$ is an accumulation point it must be in $\wds p$, which is the closure of $D^*p$.

The hard part is to show that every unit of $\da$ is a limit of prime irreducibles. In Lemma \ref{l:chbt} below, we will prove that for any $\gota\in\cali(D)$ we have 
$(D/\gota)^*\subseteq\pi_\gota\big(\Irr(D)\big)$. This is enough, because $\da^*=\varprojlim(D/\gota)^*$ yields $\hpi_\gota(\da^*)=(D/\gota)^*$ and, by Lemma \ref{l:clsr}, 
$$\widehat{\Irr(D)}=\bigcap_{\gota\in\cali(D)}\Irr(D)_\gota=\bigcap_{\gota\in\cali(D)}\hpi_\gota^{-1}\big(\pi_\gota(\Irr(D))\big)\,.$$
\end{prf}

\begin{rmk} \label{r:drc} In the case $D=\Z$, letting $\calp\subset\N$ denote the set of prime numbers, it is straightforward (using just the definition of the topology on $\za$) to see that the claim $\za^*\subseteq\cpr$ is equivalent to Dirichlet's theorem on primes in arithmetic progressions. \end{rmk}

Our next goal is to prove the equality \begin{equation} \label{e:pairr} \bigcup_{\gotp\notin\supp(\gota)}\pi_\gota\big(\Irr(\gotp)\big)=(D/\gota)^*\,. \end{equation}
As suggested by Remark \ref{r:drc}, the main tool will be a generalization of Dirichlet's result. For lack of a convenient reference, we shall sketch a proof of it in Theorem \ref{t:rfr}, after having first recalled some facts about ideles.

\subsubsection{A reminder on ideles} By a canonical identification, we can think of $\calp(D)$ as a subset of $\calv(F)$, the set of all places of $F$. Also, let $S_D$ be the complement of 
$\calp(D)$ in $\calv(F)$. The idele group of $F$ is defined as
$$\idl_F:=\varinjlim_S\,\prod_{\gotp\notin S}\dap^*\times\prod_{v\in S}F_v^*=\varinjlim_S\idl_S\,,$$
where the limit is taken over all finite subsets $S\subset\calv(F)$ containing $S_D$\,. Each $\idl_S$ is given the product topology: hence if $S\subseteq T$ the map $\idl_S\hookrightarrow\idl_T$ in the direct system is an open embedding. There is an obvious  injection of $\idl_F$ into $\prod_{v\in\calv(F)}F_v^*$ (not preserving the topology) and in the following we will use it to describe a generic idele as $x=(x_v)_{v\in\calv(F)}$.\\

We will need certain subgroups of $\idl_F$. First of all, $F^*$ will be thought of as a discrete subgroup of $\idl_F$ via the diagonal embedding: the idele class group is the quotient $\idl_F/F^*$.

For any place $v$ of $F$, let $\iota_v\colon F_v^*\hookrightarrow\idl_F$ be the map which sends $x\in F_v^*$ in the idele having component $x$ at $v$ and $1$ otherwise: that is,
$$ (\iota_v(x))_w:=\begin{cases} 1 \;\text{ if }v\neq w\,; \\ x\;\text{ if }v=w\,. \end{cases}$$
By \eqref{e:crt}, the product of the maps $\iota_\gotp|_{\dap^*}$ for all $\gotp\in\calp(D)$ defines an embedding $\iota_D\colon\da^*\hookrightarrow\idl_F$. The maps $v_\gotp$ extend to $\idl_F$ in the obvious way. Since $F^*\cap\iota_D(\da^*)=\{1\}$, the homomorphism $\iota_D$ induces an injection 
$$\bar\iota_D\colon\da^*\hooklongrightarrow\idl_F/F^*\,.$$
Another subgroup of $\idl_F$ that we are going to use is
$$F_\infty^*:=\prod_{v\in S_D}\iota_v(F_v^*)$$
together with the injection $\iota_\infty\colon F^*\hookrightarrow F_\infty^*$ defined as the product of $\iota_v|_{F^*}$ for all $v\in S_D$\,. Finally, we consider the subgroups 
$\iota_D(U_\gota)$, where $\gota\in\cali(D)$ and
$$U_\gota:=\{x\in\da^*\mid x\equiv1\!\!\mod\widehat\gota\}=(1+\gota\da)\cap\da^*\,.$$
We want to assess the image of $\da^*$ in the quotient group $\idl_F/F^*F_\infty^*\iota_D(U_\gota)\,.$

\begin{lem} For any $\gota\in\cali(D)$, we have 
\begin{equation} \label{e:intrsidl} \iota_D(\da^*)\cap F^*F_\infty^*\iota_D(U_\gota)=\iota_D(D^*U_\gota)\,. \end{equation} \end{lem}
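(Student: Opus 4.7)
The plan is to establish both inclusions in \eqref{e:intrsidl} by unwinding the definitions of $\iota_D$ and $\iota_\infty$ and comparing components of ideles at finite and infinite places.

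For the inclusion $\supseteq$, the key step is to check that $\iota_D(D^*)\subseteq F^*F_\infty^*$. Given $d\in D^*$, write also $d$ for its diagonal image in $\idl_F$; then the idele $d\cdot\iota_D(d)^{-1}$ is trivial at every $\gotp\in\calp(D)$ (since the diagonal and $\iota_D$ agree there) and equals $d$ at every $v\in S_D$, hence lies in $F_\infty^*$. Rearranging gives $\iota_D(d)\in F^*F_\infty^*$. Combined with the obvious inclusion $U_\gota\subseteq\da^*$ and the fact that $\iota_D$ is a group homomorphism, this yields $\iota_D(D^*U_\gota)\subseteq\iota_D(\da^*)\cap F^*F_\infty^*\iota_D(U_\gota)$.

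For the non-trivial direction $\subseteq$, suppose $\iota_D(a)=y\cdot z\cdot\iota_D(u)$ with $a\in\da^*$, $y\in F^*$, $z\in F_\infty^*$ and $u\in U_\gota$. I would read this equality componentwise. At a place $v\in S_D$ the left-hand side is $1$ and the right-hand side is $y\cdot z_v$ (since $\iota_D(u)$ is trivial there), which only determines $z_v=y^{-1}$ and contributes no extra information. At a finite place $\gotp\in\calp(D)$, noting that $z$ is trivial at $\gotp$ while $y$ contributes its diagonal value, the equation becomes $a_\gotp=y\,u_\gotp$ in $\dap^*$. Hence $y=a_\gotp u_\gotp^{-1}\in\dap^*$ for every $\gotp\in\calp(D)$, so $v_\gotp(y)=0$ for all such $\gotp$. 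Since $D$ is the ring of $S_D$-integers of $F$, both $y$ and $y^{-1}$ lie in $D$, which means $y\in D^*$. Using the isomorphism of Theorem \ref{t:crt} to reassemble the finite-place identities, we obtain $a=yu$ in $\da^*$, so $a\in D^*U_\gota$ and consequently $\iota_D(a)\in\iota_D(D^*U_\gota)$.

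The only point that requires a little care is keeping straight the three ways in which $D^*$ sits inside $\idl_F$ (the diagonal embedding $D^*\subseteq F^*$, the embedding $\iota_D$ through the finite places, and their difference which lands in $F_\infty^*$). Once these are disentangled, the proof reduces to elementary componentwise bookkeeping together with the valuation-theoretic characterization of $D^*$.
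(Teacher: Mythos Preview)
Your proof is correct and follows essentially the same approach as the paper: both directions are handled by comparing idele components at places in $S_D$ and at primes in $\calp(D)$, deducing that the $F^*$-factor has trivial $\gotp$-adic valuation for every $\gotp$ and hence lies in $D^*$. The only differences are notational (you write $y,z$ where the paper writes $a,y$) and in the level of detail spelled out.
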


\begin{prf} Assume $\iota_D(x)=ay\,\iota_D(u)$ with $x\in\da^*$, $a\in F^*$, $y\in F_\infty^*$ and $u\in U_\gota$. Comparing components at places in $S_D$ yields $y=\iota_\infty(a^{-1})$ 
and computing valuations one gets $v_\gotp(a)=0$ for every $\gotp\in\calp(D)$. Therefore $a$ is in $D^*$ and $x=au$ because $\iota_D$ is injective.

Vice versa, given $a\in D^*$ and $u\in U_\gota$, it is clear that $\iota_D(au)=a\iota_\infty(a^{-1})\iota_D(u)\in F^*F_\infty^*\iota_D(U_\gota)$. \end{prf}

Equality \eqref{e:intrsidl} shows that $\iota_D$ induces an injection $\da^*/D^*U_\gota\hooklongrightarrow\idl_F/F^*F_\infty^*\iota_D(U_\gota)\,.$ By \eqref{e:crt} we can write
$$U_\gota=\prod_{\gotp\in\supp(\gota)}(1+\gotp^{v_\gotp(\gota)}\dap)\times\prod_{\gotp\notin\supp(\gota)}\dap^*$$
which shows that there is an exact sequence
$$\begin{CD} 0 @>>> U_\gota @>>> \da^* @>\hpi_\gota>> (D/\gota)^* @>>> 0\,. \end{CD}$$
Hence there  is a surjection
\begin{equation} \label{e:psia} \psi_\gota\colon(D/\gota)^*\longrightarrow\!\!\!\!\!\rightarrow\da^*/D^*U_\gota \end{equation}
with kernel $\pi_\gota(D^*)$.

Let $\phi_\gota\colon\idl_F/F^*\twoheadrightarrow\idl_F/F^*F_\infty^*\iota_D(U_\gota)$ be the quotient  map. Summarizing, we have a commutatitive diagram
\begin{equation} \label{e:cmmidl} \begin{CD} && \da^* @>\bar\iota_D>> \idl_F/F^*   \\
&& @VV{\psi_\gota\circ\hpi_\gota}V @VV{\phi_\gota}V  \\
1 @>>> \da^*/D^*U_\gota @>\text{via }\iota_D>> \idl_F/F^*F_\infty^*\iota_D(U_\gota) @>>> \idl_F/F^*F_\infty^*\iota_D(\da^*) @>>> 1 \vspace{3pt}  \end{CD} \end{equation} 
where the second line is exact and vertical maps are surjective. Let $\cls(D)$ be the class group of $D$. Considering the map which sends $x\in\idl_F/F_\infty^*$ to $\prod_\gotp\gotp^{v_\gotp(x)}$, one obtains an isomorphism
\begin{equation} \label{e:cls} \idl_F/F^*F_\infty^*\iota_D(\da^*)\simeq\cls(D)\,. \end{equation}

\subsubsection{Conclusion of the proof of Theorem \ref{t:chbt}} For every $\gotp\in\calp(D)$, choose a uniformizer $t_\gotp\in F_\gotp$ and define a map $\iota_\calp\colon\calp(D)\rightarrow\idl_F$ by $\gotp\mapsto\iota_\gotp(t_\gotp)$.

\begin{thm} \label{t:rfr} The composition $\beta=\phi_\gota\circ\iota_\calp$ is surjective. \end{thm}

\begin{skprf} To lighten notation, let $G$ denote the group $\idl_F/F^*F_\infty^*\iota_D(U_\gota)$ and $G^\vee$ its dual. Note that $G$ is finite, by \eqref{e:psia} and \eqref{e:cls}. To each $\chi\in G^\vee$, attach an $L$-function
$$L(\chi,s)=\prod_{\gotp\in\calp(D),\,\gotp\nmid\gota}\left(1-\frac{\chi(\beta(\gotp))}{\|\gotp\|^s}\right)^{-1}\,.$$
It is well-known that all the functions $L(\chi,s)$ are meromorphic, with a pole in $s=1$ if $\chi$ is trivial and neither poles nor zeroes in $1$ otherwise (see e.g. \cite[VII, \S5, Corollary 2 and page 293, note P.126]{wl}). From here one can reason as in \cite[VI, \S4, proof of Theorem 2]{ser}. Namely, let $\calp_g(D)$ be the set of those $\gotp$ such that $\beta(\gotp)=g$. One concludes noticing that
$$\sum_{\gotp\in\calp_g(D)}\frac{1}{\|\gotp\|^s}=\frac{1}{|G|}\sum_{\chi\in G^\vee}\chi(g^{-1})\sum_{\gotp\in\calp(D)}\frac{\chi(\beta(\gotp))}{\|\gotp\|^s}\sim\log\prod_{\chi\in G^\vee}L(\chi,s)$$
(where $\sim$ refers to the asymptotic behaviour as $s$ approaches $1$) implies the divergence of the leftmost sum in $s=1$. Therefore $\calp_g(D)$ is non-empty. \end{skprf}

\begin{lem} \label{l:chbt} Equality \eqref{e:pairr} is true for every $\gota\in\cali(D)$. \end{lem}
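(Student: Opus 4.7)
The inclusion $\bigcup_{\gotp\notin\supp(\gota)}\pi_\gota\big(\Irr(\gotp)\big)\subseteq(D/\gota)^*$ is immediate: if $p\in\Irr(\gotp)$ with $\gotp\nmid\gota$, then $pD=\gotp$ is coprime to $\gota$, so $\pi_\gota(p)$ is a unit in $D/\gota$. The content is in the opposite inclusion.

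Given $\bar u\in(D/\gota)^*$, the plan is to produce a principal prime $\gotq=qD$ with $\gotq\notin\supp(\gota)$ such that $\pi_\gota(q)\equiv\bar u$ modulo $\pi_\gota(D^*)$; adjusting $q$ by a suitable unit in $D^*$ will then give $\pi_\gota(q)=\bar u$ exactly. Since $\ker\psi_\gota=\pi_\gota(D^*)$, the desired congruence is equivalent to
$$\psi_\gota(\pi_\gota(q))=\psi_\gota(\bar u)\quad\text{in}\quad\da^*/D^*U_\gota\,.$$

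By \eqref{e:hlbrt}, the reciprocity isomorphism $\rec_\gota$ identifies $\da^*/D^*U_\gota$ with the subgroup $\Gal(H_\gota/H_D)\subseteq\Gal(H_\gota/F)$. Hence setting $g:=\rec_\gota\big(\psi_\gota(\bar u^{-1})\big)$ one gets $g\in\Gal(H_\gota/H_D)$. The Chebotarev density theorem then yields a prime $\gotq$ with $\Frob_\gotq=g$, and since $\supp(\gota)$ is finite we may choose $\gotq\notin\supp(\gota)$ (such a $\gotq$ is automatically unramified in $H_\gota$). Lemma \ref{l:hlbrt} then forces $\gotq$ to be principal, so $\gotq=qD$ for some $q\in D$.

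It remains to translate $\Frob_\gotq=g$ into the required equality of $\psi_\gota$-classes, which is the main technical obstacle. Using $\Frob_\gotq=\rec_\gota(\phi_\gota(\iota_\gotq(q)))$ together with the fact that the diagonal idele $q_\idl$ lies in $F^*$, one decomposes $\iota_\gotq(q)\equiv y_\infty^{-1}\iota_D(\tilde q^{-1})\pmod{F^*}$, where $y_\infty\in F_\infty^*$ has component $q$ at each $v\in S_D$ and $\tilde q\in\da^*$ has component $q$ at each prime $\gotp'\neq\gotq$ and component $1$ at $\gotq$. Quotienting out $F_\infty^*$ and invoking the commutativity \eqref{e:cmmidl} gives $\phi_\gota(\iota_\gotq(q))=\psi_\gota(\hpi_\gota(\tilde q)^{-1})$, and since $\gotq\notin\supp(\gota)$ we have $\hpi_\gota(\tilde q)=\pi_\gota(q)$, so that $\phi_\gota(\iota_\gotq(q))=\psi_\gota(\pi_\gota(q)^{-1})$. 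Combining with $\rec_\gota(\phi_\gota(\iota_\gotq(q)))=\Frob_\gotq=g=\rec_\gota(\psi_\gota(\bar u^{-1}))$ and the injectivity of $\rec_\gota$ yields $\psi_\gota(\pi_\gota(q))=\psi_\gota(\bar u)$, as required. The delicate point is precisely this bookkeeping: relating the class of $q$ in the idele quotient (where one naturally sees the local uniformizer $\iota_\gotq(q)$) to the class of $\pi_\gota(q)$ in $(D/\gota)^*$ (where $q$ is a unit only because it is coprime to $\gota$, not because $q\in\da^*$), while keeping track of the sign introduced by passing between the diagonal and the local embedding.
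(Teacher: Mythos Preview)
Your proof is correct and follows essentially the same approach as the paper's: apply Chebotarev to hit the element $\rec_\gota(\psi_\gota(\bar u^{-1}))\in\Gal(H_\gota/H_D)$ by a Frobenius, use Lemma~\ref{l:hlbrt} to conclude the resulting prime is principal, and then perform the idele bookkeeping (decomposing the local uniformizer via the diagonal embedding of $q$) to show $\psi_\gota(\pi_\gota(q))=\psi_\gota(\bar u)$. Your decomposition $\iota_\gotq(q)\equiv y_\infty^{-1}\iota_D(\tilde q^{-1})\pmod{F^*}$ is exactly the paper's identity $t_\gotp x_\gotp=\iota_D(z_\gotp)$ with $x_\gotp=u_\gotp^{-1}\iota_\infty(u_\gotp)$, written from the other side.
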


\begin{prf} Let $\gotp$ be a principal prime and $u_\gotp$ any element in $\Irr(\gotp)$. It is obvious that $\gotp\notin\supp(\gota)$ implies $\pi_\gota(u_\gotp)\in(D/\gota)^*$.

In order to prove  the other inclusion, take $w\in(D/\gota)^*$ and recall the surjection $\psi_\gota$ from \eqref{e:psia}. By diagram \eqref{e:cmmidl} and Theorem \ref{t:rfr}, there is some prime $\gotp$ outside $\supp(\gota)$ such that
$$\psi_\gota(w)=\phi_\gota\big(\iota_\calp(\gotp)\big)^{-1}\,.$$
(The reason for taking the inverse will soon be clear.) Comparing the lower line of \eqref{e:cmmidl} with \eqref{e:cls} shows that $\gotp$ is a principal ideal: let $u_\gotp\in F$ be a generator. We can think of $u_\gotp$ as an idele via the diagonal embedding of $F^*$ into $\idl_F$; moreover, since $F$ is a subfield of each $F_v$, we can consider the ideles $\iota_v(u_\gotp)$ for every $v$. In particular, let $t_\gotp:=\iota_\gotp(u_\gotp)$. Note that the idele
$$x_\gotp:=u_\gotp^{-1}\prod_{v|\infty}\iota_v(u_\gotp)=u_\gotp^{-1}\iota_\infty(u_\gotp)$$
belongs to $F^*F_\infty^*$, which implies
\begin{equation} \label{e:frobp} \phi_\gota(t_\gotp)=\phi_\gota(t_\gotp x_\gotp)\,. \end{equation}
Furthermore $t_\gotp x_\gotp=\iota_D(z_\gotp)$ for some $z_\gotp\in\da^*$, because the idele $t_\gotp x_\gotp$ has components
\begin{equation} \label{e:cmptpxp} (t_\gotp x_\gotp)_v=\begin{cases} \begin{array}{ll} 1 & \text{ if }v\in S_D\cup\{\gotp\} \\ 
u_\gotp^{-1} & \text{ if }v\notin S_D\cup\{\gotp\} \end{array}\end{cases} \end{equation}
and $v_\gotq(u_\gotp)=0$ for each $\gotq\in\calp(D)-\{\gotp\}$. The image in $(D/\gota)^*$ only depends on the residue class at primes in $\supp(\gota)$ and thus \eqref{e:cmptpxp} yields 
$\pi_\gota(u_\gotp)=\hpi_\gota(z_\gotp^{-1})$. Therefore we obtain
$$\psi_\gota\big(\pi_\gota(u_\gotp)\big)=\psi_\gota\big(\hpi_\gota(z_\gotp^{-1})\big)\stackrel{\text{by \eqref{e:cmmidl}}}{=}
\phi_\gota\big(t_\gotp^{-1}x_\gotp^{-1}\big)\stackrel{\text{by \eqref{e:frobp}}}{=}\psi_\gota(w)\,.$$
This implies $w\in\pi_\gota(D^*u_\gotp)$, because the kernel of $\psi_\gota$ is $\pi_\gota(D^*)$. The observation $D^*u_\gotp=\Irr(\gotp)$ concludes the proof. \end{prf}

\subsubsection{The closure of $D^*$} \label{sss:clD*} Theorem \ref{t:chbt} expresses the closure of $\Irr(D)$ as a disjoint union of two pieces, $\da^*$ and $\wds\Irr(D)$. If $D^*$ is finite, then $\wds\Irr(D)=\Irr(D)$ is countable and hence it is much ``smaller'' (in an obvious sense) than the uncountable set $\da^*$. We are going to show that this is always the case, for any global Dedekind domain $D$.

\begin{prop} \label{p:indunt} If $D$ is a global Dedekind domain, then $|\da^*/\wds |=\infty$.  \end{prop}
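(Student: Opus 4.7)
The plan is to produce a continuous surjection from $\da^*$ onto an infinite discrete group on which the image of $\wds$ is automatically finite. The underlying intuition is that $D^*$ is finitely generated — by Dirichlet's $S$-unit theorem in the number field case, or by its function field analogue — so its image in any abelian group of fixed exponent is forced to be bounded in cardinality, whereas $\da^*$ admits very large quotients of any prescribed exponent.

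To implement this, fix a prime number $\ell$ and let $T$ denote the set of $\gotq \in \calp(D)$ for which $\ell$ divides $|D/\gotq|-1$. These are precisely the primes away from $\ell$ that split completely in $F(\zeta_\ell)/F$, so the Chebotarev density theorem (already used in the proof of Theorem \ref{t:chbt}) guarantees that $T$ is infinite. For each $\gotq \in T$ the group $(D/\gotq)^*$ is cyclic of order divisible by $\ell$, hence its quotient by $\ell$-th powers is isomorphic to $\Z/\ell\Z$. Composing with the reduction $\hpi_\gotq \colon \da^* \twoheadrightarrow (D/\gotq)^*$ and taking the product over $\gotq \in T$ yields a continuous homomorphism
$$\phi \colon \da^* \longrightarrow \prod_{\gotq \in T} \Z/\ell\Z\,.$$
This $\phi$ is surjective: each factor is, and exploiting the identification $\da^* = \prod_\gotp \dap^*$ one may lift independently at coordinates in $T$ and set the other coordinates equal to $1$.

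The key observation is that $\phi(D^*)$ is a subgroup of a group of exponent $\ell$ generated by the (finitely many) generators of $D^*$, so its cardinality is at most $\ell^N$, where $N$ is the number of such generators. Since a finite subset of a Hausdorff space is closed, continuity together with compactness gives $\phi(\wds) = \overline{\phi(D^*)} = \phi(D^*)$, which is still finite. On the other hand $|T|=\infty$ makes $\prod_{\gotq \in T} \Z/\ell\Z$ infinite, so the induced surjection
$$\da^*/\wds \;\twoheadrightarrow\; \prod_{\gotq \in T} \Z/\ell\Z \,\big/\, \phi(D^*)$$
has infinite target, whence $[\da^*:\wds]=\infty$.

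The only conceptual obstacle is recognizing where the obstruction must live: at any single prime $\gotq$ the image of $D^*$ in $(D/\gotq)^*$ can already be the whole group, so the finite generation of $D^*$ only becomes visible when it is tested diagonally against infinitely many independent $\ell$-th power residue symbols. Dirichlet's $S$-unit theorem and Chebotarev's density theorem are the two external ingredients, both standard.
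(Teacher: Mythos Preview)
Your proof is correct and follows essentially the same strategy as the paper: construct a continuous surjection from $\da^*$ onto an infinite product $\prod_{\gotq}\Z/\ell\Z$ indexed by primes with $\ell\mid |\F_\gotq^*|$, then invoke finite generation of $D^*$ (Dirichlet's $S$-unit theorem) to force $\phi(\wds)=\phi(D^*)$ to be finite. One small caveat you should make explicit: in positive characteristic $p$ you must take $\ell\neq p$, since otherwise $|D/\gotq|-1\equiv -1\pmod{\ell}$ for every $\gotq$ and $T=\emptyset$; the paper handles this point by a short case analysis on the characteristic rather than a single appeal to Chebotarev, but the substance is the same.
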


\begin{prf} Let $\F_\gotp:=\dap/\gotp$ denote the residue field of $\gotp\in\calp(D)$. For every integer $n>1$, put 
$$\calp_n(D):=\big\{\gotp\in\calp(D)\mid n\text{ divides }|\F_\gotp^*|\big\}$$
and $(\F_\gotp^*)^n:=\{x^n\mid x\in\F_\gotp^*\}$. If $\gotp\in\calp_n(D)$ then the quotient $\F_\gotp^*/(\F_\gotp^*)^n$ is a cyclic group of order $n$. The quotient homomorphism
$\dap^*\rightarrow\F_\gotp^*/(\F_\gotp^*)^n$ is onto for every $\gotp$ and thus, by \eqref{e:crt}, we obtain a surjection
$$\varphi_n\colon\da^*\longrightarrow\!\!\!\!\!\rightarrow\prod_{\gotp\in\calp_n(D)}\F_\gotp^*/(\F_\gotp^*)^n\,.$$

By Dirichlet's unit theorem (in its $S$-units version) $D^*$ is a finitely generated group of rank $|S_D|-1$ and with cyclic torsion: hence $\varphi_n(D^*)$ is at most a product of $|S_D|$ cyclic groups of order $n$ and therefore
$$|\varphi_n(D^*)|\le n^{|S_D|}\,.$$ 
The image $\varphi_n(D^*)$ is dense in $\varphi_n(\wds )$ and, being finite, it is closed: hence $\varphi_n(\wds )=\varphi_n(D^*)$. On the other hand, if $\calp_n(D)$ is infinite then 
$\varphi_n(\da^*)$ is uncountable: in this case,
$$|\da^*/\wds |\ge|\varphi_n(\da^*)/\varphi_n(\wds )|=\infty\,.$$

Thus we just need to show that $\calp_n(D)$ is infinite for at least one $n$. Let $\car(F)$ denote the characteristic of $F$. The set $\calp_2(D)$ is infinite if $\car(F)\neq2$. If $\car(F)=2$, let $\F$ be the constant field of $F$. If $|\F|>2$, then one can take $n=|\F^*|$ since $\F\subseteq\F_\gotp$ for every $\gotp$. The reasoning becomes more sophisticated if $\F=\F_2$ - for example, one can show that $\calp_3(D)$ is infinite by proving that there are infinitely many primes of $D$ which split in the constant field extension $K=F\F_4$. As well known, this can be seen by observing that the Dedekind zeta function $\zeta_K(s)$ has a pole in $s=1$, but the product $\prod(1-\|\gotp\|^{-s})^{-1}$ over places $\gotp$ of $K$ which are inert in $K/F$ converges (because so does the sum $\sum\|\gotp\|^{-s}$ over the same set of places). \end{prf}

Restricting the topology of $\da$ makes $\da^*$ a compact topological group (this is the same topology on $\da^*$ as induced by the embedding in $\idl_F$). In particular, $\da^*$ has its own Haar measure $\mu_{\da^*}$, normalized so to have total mass 1.

\begin{cor} \label{c:irrbpcc} The set $\wds$ has empty interior in the topology of $\da^*$ and $\mu_{\da^*}(\wds)=0$.  \end{cor}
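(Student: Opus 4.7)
The plan is to exploit the group structure: $\wds$ is the closure of the subgroup $D^*$ inside the topological group $\da^*$ (one first checks that $\da^* = \prod_\gotp \dap^*$ is closed in $\da$, using \eqref{e:crt} and the fact that each $\dap^*$ is closed in $\dap$), so $\wds$ is a closed subgroup of $\da^*$. Both statements will then follow from Proposition \ref{p:indunt} combined with standard facts about Haar measure and topological groups.

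For the interior assertion, I would invoke the general principle that a subgroup $H$ of a topological group $G$ with non-empty interior is automatically open: if $U \subseteq H$ is open and $h_0 \in U$, then for any $h \in H$ the translate $hh_0^{-1}U$ is an open neighborhood of $h$ contained in $H$. Applied to $H = \wds$ and $G = \da^*$, a non-empty interior would force $\wds$ to be open in the compact group $\da^*$, hence of finite index, contradicting $|\da^*/\wds| = \infty$ from Proposition \ref{p:indunt}.

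For the measure assertion, the key observation is that by left-translation invariance of $\mu_{\da^*}$ all cosets of the closed subgroup $\wds$ share the same measure. Since $\mu_{\da^*}(\da^*) = 1$, if $\mu_{\da^*}(\wds) > 0$ were true then $\wds$ could have only finitely many cosets, again contradicting Proposition \ref{p:indunt}. (One small technical point is to make sure the cosets are measurable and pairwise disjoint, but this is immediate since $\wds$ is closed, hence Borel, and cosets of a subgroup partition the ambient group.)

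I do not expect a serious obstacle here; the only thing to double-check is that $\da^*$ is indeed closed in $\da$ so that $\wds$, defined as the closure in $\da$, coincides with the closure in $\da^*$ and is therefore a closed subgroup of $\da^*$ to which the above arguments apply. Once this is verified, both parts of the corollary drop out directly from Proposition \ref{p:indunt}.
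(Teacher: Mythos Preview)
Your proposal is correct and follows essentially the same approach as the paper: both derive everything from Proposition~\ref{p:indunt} via standard facts about closed subgroups of compact groups and Haar measure. The only minor difference is logical order: the paper first gets $\mu_{\da^*}(\wds)=0$ from the infinite index (measure of a closed subgroup equals the inverse of its index) and then deduces empty interior from the fact that nonempty open sets have positive Haar measure, whereas you deduce both conclusions independently from the infinite index; your version is slightly more explicit about the preliminary check that $\da^*$ is closed in $\da$.
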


\begin{prf} The Haar measure of a closed subgroup is the inverse of its index; moreover every open subset has positive measure. Thus the first statement follows from the second, which in 
turn is immediate from Proposition \ref{p:indunt}. \end{prf}

By \eqref{e:irrdp}, the set $\wds\Irr(D)$ is the disjoint union of countably many copies of $\wds $. Putting together Theorem \ref{t:chbt} and Corollary \ref{c:irrbpcc}, one can say that the ``bulk'' of $\Irr(D)$ is somehow measured by $\da^*$. We hope to show in future work how this can be used to study the relative density of subsets of the primes.

\begin{rmk} It is straightforward to check that one has $D\cap\wds\Irr(D)=\Irr(D)$ and $D\cap\da^*=D^*$. Hence Theorem \ref{t:chbt} implies that $\Irr(D)$ is not a closed subset of $D$ in the induced topology: its closure is exactly $\Irr(D)\sqcup D^*$ (and the closure of $\calp$ in $\Z$ is $\calp\sqcup\{\pm1\}$). Corollary \ref{c:irrbpcc} shows that the part $D^*$ can be considered negligible. \end{rmk}

\section{Measure and density} \label{s:m&d}

In this section $D$ is any Dedekind domain satisfying Assumption \ref{a:cntb}.

\subsection{The measure} Since $\da$ is a compact topological ring, there is a Haar measure $\mu$ on it, normalized so to have $\mu(\da)=1$. By definition $\mu$ is invariant under translation by any element of $\da$, hence all cosets of an ideal have the same measure and it follows that for any ideal $I$ of $\da$, one has 
\begin{equation} \label{e:dfnhaar} \mu(I)=\frac{1}{|\da/I|}=\frac{1}{\|I\|}\;. \end{equation}
Here and in the following, we use the convention $\frac{1}{\infty}=0$. By Lemma \ref{l:msr0}, $\mu(I)\neq0$ if and only if $I$ is open (that is, if $I$ is the completion of a non-trivial ideal of $D$).

Multiplication by $a\in\da$ changes the Haar measure by a scalar multiple and thus \eqref{e:dfnhaar} implies
\begin{equation} \label{e:msrprd} \mu(aX)=\frac{1}{\|a\|}\cdot\mu(X) \end{equation}
for every measurable $X\subseteq\da$.

By a slight abuse of notation, in the following we shall also denote by $\mu$ the measure on $\da^n$ induced by the measure on $\da$. Reasoning as above, we get
\begin{equation} \label{e:hrdn} \mu(\gota\da^n)=\frac{1}{\|\gota\|^n} \end{equation}
for every $\gota\in\cali(D)$.

\begin{lem} \label{l:msrlmt} For every $X\subseteq\da^n$ and $\calt\subseteq\cals(D)$ having $0$ as a limit point, 
\begin{equation} \label{e:limmu} \mu(\xa)=\lim_{\sigma\rightarrow0}\mu(X_\sigma)\;, \end{equation} 
where the limit is taken letting $\sigma$ vary in $\calt$. \end{lem}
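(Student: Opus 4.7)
The plan is to combine monotonicity of $X_\sigma$ in $\sigma$ (with respect to divisibility) with continuity of the Haar measure from above, and then upgrade the resulting sequential statement to a genuine topological limit by exploiting the quotient structure on $\cals(D)$.

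First I would record two monotonicity facts. Each $X_\sigma$ is closed and contains $X$, so $\xa\subseteq X_\sigma$ and hence $\mu(\xa)\le\mu(X_\sigma)$; and the observation just before Lemma \ref{l:clsr} gives $X_\tau\subseteq X_\sigma$ (so $\mu(X_\tau)\le\mu(X_\sigma)$) whenever $\sigma\mid\tau$.

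Next, using countability of $\cali(D)$ granted by Assumption \ref{a:cntb}, I would enumerate $\cali(D)=\{\gota_k\}_{k\ge1}$ and set $\gotb_N:=\gota_1\cdots\gota_N$. Then $\{X_{\gotb_N}\}_N$ is a decreasing sequence of closed (hence Borel) subsets of the compact space $\da^n$, and since $\{\gotb_N\}$ is cofinal in $\cali(D)$ (every $\gota_k$ divides $\gotb_N$ for $N\ge k$), Remark \ref{r:idl}.1 yields $\bigcap_N X_{\gotb_N}=\xa$. Finiteness of $\mu$ combined with continuity of measure from above then gives $\mu(X_{\gotb_N})\to\mu(\xa)$ as $N\to\infty$.

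The last ingredient is to exhibit a convenient open neighborhood basis of $0$ in $\cals(D)$. For $\gota\in\cali(D)$ put $V_\gota:=\{\sigma\in\cals(D):\hat\gota\mid\sigma\}$; under the surjection $\rho$ of \eqref{e:ro}, the preimage $\rho^{-1}(V_\gota)$ is precisely the ideal $\hat\gota$, which is open in $\da$ by Lemma \ref{l:msr0}, so $V_\gota$ is open in $\cals(D)$ by the definition of the quotient topology. Given $\epsilon>0$, I would then pick $N$ with $\mu(X_{\gotb_N})-\mu(\xa)<\epsilon$ and set $V:=V_{\gotb_N}$; for every $\sigma\in V\cap\calt$ the relation $\hat\gotb_N\mid\sigma$ forces $X_\sigma\subseteq X_{\gotb_N}$, whence
$$\mu(\xa)\le\mu(X_\sigma)\le\mu(X_{\gotb_N})<\mu(\xa)+\epsilon,$$
which is exactly \eqref{e:limmu}. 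The only delicate step is the topological check that $V_\gota$ is open in $\cals(D)$; once that is in place, everything reduces to a routine continuity-of-measure argument.
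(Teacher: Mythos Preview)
Your proof is correct and follows essentially the same route as the paper's: both rely on the intersection formula $\xa=\bigcap X_\sigma$ from Lemma \ref{l:clsr} together with continuity of the finite Haar measure from above along a countable descending chain of closed sets. The paper's two-line sketch invokes Remark \ref{r:idl}.2 to pass to a countable subfamily of $\calt$, whereas you build the chain from an enumeration of $\cali(D)$ and explicitly verify that the sets $V_\gota$ are open neighborhoods of $0$ in $\cals(D)$ --- a more detailed but equivalent implementation of the same idea.
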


\begin{prf} One has $\mu(\xa)\le\mu(X_\sigma)$ for every $\sigma$, because $\xa\subseteq X_\sigma$ holds by definition. The equality \eqref{e:limmu} is then obvious from 
\eqref{e:clsr} and Remark \ref{r:idl}.2. \end{prf}

In  Lemma \ref{l:msrlmt} one can take a subset of $\cali(D)$ as $\calt$. Since $\hpi_\gota(X)$ is a finite set for every $\gota\in\cali(D)$, equation \eqref{e:limmu} becomes
\begin{equation} \label{e:limidl} \mu(\xa)=\lim_{\gota\rightarrow0}\mu(X_\gota)=\lim_{\gota\rightarrow0}\frac{|\hpi_\gota(X)|}{\|\gota\|^n}\,. \end{equation}
(One can observe that \cite[Theorem 1]{pst} is just a special case of this.) As the next result shows, there are other interesting choices for $\calt$.

\begin{cor} \label{c:egprd} For any $\gotp$, let $C_\gotp\subseteq \dap^n$ be a closed set and put $C=\prod_\gotp C_\gotp$. Then
$$\mu(C)=\prod_{\gotp\in\calp(D)}\mu_\gotp(C_\gotp)\,,$$ 
where $\mu_\gotp$ denotes the Haar measure on $\dap^n$ (normalized so to have $\mu_\gotp(\dap^n)=1$). \end{cor}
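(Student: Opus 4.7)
The plan is to apply Lemma \ref{l:msrlmt} to $C$ with a judiciously chosen family $\calt \subseteq \cals(D)$, so that each approximation $C_\sigma$ is a finite product for which the Haar measure can be computed explicitly. First, observe that $C$ is closed in $\da^n$: under the identification $\da^n \simeq \prod_\gotp \dap^n$ of Theorem \ref{t:crt}, the set $C$ is a product of closed subsets of the compact spaces $\dap^n$, hence closed in the product topology. Thus $\hat C = C$, and Lemma \ref{l:msrlmt} will give $\mu(C)$ as a limit.

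For each finite $S \subseteq \calp(D)$ set $\sigma_S := \prod_{\gotp \in S} \gotp^\infty \in \cals(D)$, so that via Theorem \ref{t:crt} one has $\da/\sigma_S \simeq \prod_{\gotp \in S} \dap$ and $\hpi_{\sigma_S}$ is just the projection onto the $S$-components. The family $\calt := \{\sigma_S : S \subseteq \calp(D)\text{ finite}\}$ accumulates at $0 \in \cals(D)$: given any $\gota \in \cali(D)$ with $\supp(\gota) = S_0$, every $\sigma_S$ with $S \supseteq S_0$ is divisible by $\gota$, hence lies in the neighbourhood $\rho(\gota\da)$ of $0$. Since $C = \prod_\gotp C_\gotp$, we have $\hpi_{\sigma_S}(C) = \prod_{\gotp \in S} C_\gotp$, which is a finite product of closed sets in compact spaces and is therefore already closed. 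Hence
$$C_{\sigma_S} = \hpi_{\sigma_S}^{-1}\Bigl(\prod_{\gotp \in S} C_\gotp\Bigr) = \prod_{\gotp \in S} C_\gotp \,\times\, \prod_{\gotp \notin S} \dap^n.$$

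Now the group $\da^n$ decomposes as a product of the two compact groups $\prod_{\gotp \in S} \dap^n$ and $\prod_{\gotp \notin S} \dap^n$, each carrying its own normalized Haar measure; by uniqueness of the Haar measure on a compact group, $\mu$ coincides with the product of these two factors, and the measure of the second factor is $1$. The same argument on $\prod_{\gotp \in S} \dap^n$ shows that its Haar measure is the product of the $\mu_\gotp$ for $\gotp \in S$. Therefore
$$\mu(C_{\sigma_S}) = \prod_{\gotp \in S} \mu_\gotp(C_\gotp).$$
Applying Lemma \ref{l:msrlmt} and recalling the convention on infinite products over cofinite filters from \S\ref{ss:ntt},
$$\mu(C) = \lim_{S} \mu(C_{\sigma_S}) = \lim_{S} \prod_{\gotp \in S} \mu_\gotp(C_\gotp) = \prod_{\gotp \in \calp(D)} \mu_\gotp(C_\gotp),$$
which is the desired identity. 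The only step that is not automatic bookkeeping is the factorization of $\mu$ through the product decomposition, but this follows from the uniqueness of Haar measure on a compact group together with the normalization $\mu(\da^n)=1$.
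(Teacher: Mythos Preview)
Your proof is correct and follows essentially the same approach as the paper: introduce the supernatural ideals $\sigma_S=\prod_{\gotp\in S}\gotp^\infty$ for finite $S$, check that $\calt=\{\sigma_S\}$ accumulates at $0$, compute $C_{\sigma_S}$ as a finite product, and apply Lemma~\ref{l:msrlmt}. The only cosmetic difference is that the paper invokes Fubini's theorem for the equality $\mu(C_{\sigma_S})=\prod_{\gotp\in S}\mu_\gotp(C_\gotp)$, whereas you appeal to uniqueness of Haar measure on a product of compact groups; both justifications are standard and equivalent here.
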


\noindent Note that $C$ is closed (and therefore measurable) because each $C_\gotp$ is compact. Since all factors take value in $[0,1]$, the (possibly) infinite product on the right obviously has a limit.

\begin{prf} For $S$ a finite set of non-zero primes, put $\sigma_S:=\prod_{\gotp\in S}\gotp^\infty$. Then
$$C_{\sigma_S}=\prod_{\gotp\in S} C_\gotp\times\prod_{\gotp\notin S}\dap^n$$ 
and, by Fubini's theorem, $\mu(C_{\sigma_S})=\prod_{\gotp\in S}\mu_\gotp(C_\gotp).$ Let $\calt=\{\sigma_S\}_S$, where $S$ varies among all finite sets of non-zero primes. Then $\calt$ is a subset of $\cals(D)$ having $0$ as a limit point and one can apply Lemma \ref{l:msrlmt} to conclude. \end{prf}

\begin{eg} \label{eg:da*0} Corollary \ref{c:egprd} applies to the group of units of $\da$, since $\da^*=\prod \dap^*$ by \eqref{e:crt}. For computing $\mu(\da^*)$, it is convenient to introduce $\zeta_D$, the Dedekind zeta function of $D$, formally defined by the usual Euler product
\begin{equation} \label{e:Ddkzt} \zeta_D(s):=\prod_{\gotp\in\calp(D)}\left(1-\frac{1}{\|\gotp\|^s}\right)^{-1}. \end{equation}
(Of course there is no reason for the product in \eqref{e:Ddkzt} to converge for any $s\in\R$ without some stronger hypothesis on $D$, like it being a global Dedekind domain.)

One has $\dap^*=\dap-\gotp \dap$ and hence $\mu_\gotp(\dap^*)=\mu_\gotp(\dap)-\mu_\gotp(\gotp \dap)$. Therefore
$$\mu(\da^*)=\prod_\gotp\mu_\gotp(\dap^*)=\prod_\gotp\left(1-\frac{1}{\|\gotp\|}\right)=\frac{1}{\zeta_D(1)}\,.$$
In particular, $\mu(\da^*)=0$ if and only if $\zeta_D$ diverges in $1$.

In the case $D=\Z$, one can use this reasoning for the fun observation that Euler's celebrated proof of the existence of infinitely many prime numbers is equivalent to computing 
$\mu(\za^*)=0$. \end{eg} 

\subsection{Densities} We denote the power set of $D^n$ by $2^{D^n}$. 

\subsubsection{Definition of density} \label{sss:dns} By a density $d$ on $D^n$, we mean the datum of two functions 
$$d^+,\,d^-\colon 2^{D^n}\longrightarrow[0,1]$$ 
(called respectively upper and lower density) satisfying the properties (Dn1)-(Dn7) listed below. If $d^+(X)=d^-(X)$, we denote this common value by $d(X)$ and we say that $X$ has density $d(X)$. The requirements are the following (with $X$ and $Y$ varying among all subsets of $D^n$):\begin{itemize} 
\item[(Dn1)] $d^-(D^n)=1$\,;
\item[(Dn2)] $d^-(X)\le d^+(X)$\,;
\item[(Dn3)] $X\subseteq Y$ implies $d^*(X)\le d^*(Y)$, with $*\in\{+,-\}$\,;
\item[(Dn4)] if $D^n$ is the disjoint union of $X$ and $Y$, then $d^+(X)+d^-(Y)=1$;
\item[(Dn5)] if $X$ and $Y$ are disjoint, then 
\begin{equation}\label{e:d+} d^+(X\cup Y)\le d^+(X)+d^+(Y) \end{equation}
and
\begin{equation}\label{e:d-} d^-(X\cup Y)\ge d^-(X)+d^-(Y) \;;\end{equation}
\item[(Dn6)] for every $b\in D^n$, one has 
$$d^*(X+b)=d^*(X)\;,$$
with $*\in\{+,-\}$ and $X+b:=\{x+b\mid x\in X\}$;
\item[(Dn7)] for every ideal $\gota$ of $D$, one has
$$d^+(\gota D^n)=d^-(\gota D^n)=\frac{1}{\|\gota\|^n}\;.$$
\end{itemize} 

\begin{rmk} \label{r:cfltaxm} In the case $D=\Z$, the conditions above are almost equivalent to the axioms (F1)-(F5) proposed by Leonetti and Tringali in \cite{lt}. The difference is \eqref{e:d-}, which (as proved in \cite[Example 7]{lt}) is not a consequence of those axioms: we postulate it in order to have equality \eqref{e:d+-} below, which will play a crucial role in the proof of Theorem \ref{t:mt}. For a more precise comparison between our conditions and \cite{lt}, see \cite{dl3}. \end{rmk}

\begin{lem} Assume $d^+$ and $d^-$ satisfy conditions {\rm (Dn4)} and {\rm (Dn5)}. If $Y\subseteq D^n$ has a density, then the equality
\begin{equation}\label{e:d+-} d^-(X)=d(Y)-d^+(Y-X)\end{equation} 
holds for every $X\subseteq Y$. \end{lem}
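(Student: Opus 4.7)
The plan is to prove the two inequalities $d^-(X) \ge d(Y) - d^+(Y - X)$ and $d^-(X) \le d(Y) - d^+(Y - X)$ separately, in both cases by pairing (Dn4) (which converts between upper/lower density of a set and of its complement) with the subadditivity (Dn5). The key preliminary observation is that the disjoint partition $D^n = X \sqcup (Y - X) \sqcup (D^n - Y)$ admits two natural regroupings, and the trick is to choose the right regrouping for each inequality.

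First I would note that $W := D^n - Y$ has a density equal to $1 - d(Y)$: applying (Dn4) to the partition $D^n = Y \sqcup W$ both ways gives $d^+(W) = 1 - d^-(Y) = 1 - d(Y)$ and $d^-(W) = 1 - d^+(Y) = 1 - d(Y)$. For the lower bound on $d^-(X)$, I would regroup as $D^n - X = (Y - X) \sqcup W$. The upper inequality in (Dn5) gives
\[
d^+(D^n - X) \le d^+(Y - X) + d^+(W) = d^+(Y - X) + 1 - d(Y),
\]
and then (Dn4) applied to $D^n = X \sqcup (D^n - X)$ yields $d^-(X) = 1 - d^+(D^n - X) \ge d(Y) - d^+(Y - X)$.

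For the reverse inequality I would regroup the other way: $D^n - (Y - X) = X \sqcup W$, still a disjoint union. The lower inequality in (Dn5) gives
\[
d^-(X \sqcup W) \ge d^-(X) + d^-(W) = d^-(X) + 1 - d(Y),
\]
while (Dn4) applied to $D^n = (Y - X) \sqcup (X \sqcup W)$ gives $d^-(X \sqcup W) = 1 - d^+(Y - X)$. Combining these two and rearranging produces $d^-(X) \le d(Y) - d^+(Y - X)$, which together with the previous step establishes equality.

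There is no real obstacle here — the argument is a short manipulation of the axioms. The only non-obvious step is recognising that one must use \emph{two different} regroupings of the three-piece partition of $D^n$: the regrouping $(Y-X) \sqcup W$ controls $d^-(X)$ from below via upper-subadditivity, while the regrouping $X \sqcup W$ controls it from above via lower-subadditivity. In both cases, (Dn4) is the bridge that turns a statement about the complement (where subadditivity applies cleanly) into a statement about $X$ or $Y - X$ themselves.
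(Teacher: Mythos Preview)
Your proof is correct and follows essentially the same approach as the paper: both use the two regroupings $(Y-X)\sqcup W$ and $X\sqcup W$ of the complement (the paper writes $Z$ for your $W$), pairing (Dn4) with the upper and lower halves of (Dn5) respectively. The only cosmetic difference is that the paper keeps $d^+(Y)$ and $d^-(Y)$ separate in the two inequalities and invokes the hypothesis $d^+(Y)=d^-(Y)$ at the very end, whereas you use it up front to compute $d(W)$.
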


\begin{prf} Let $X$ be a subset of $Y$ and $Z$ the complement of $Y$ in $D^n$. Then the disjoint union $Z\cup X$ is the complement of $Y-X$ and it follows
$$d^+(Y-X)\stackrel{\text{by (Dn4)}}{=}1-d^-(Z\cup X)\stackrel{\text{by \eqref{e:d-}}}{\le}1-d^-(Z)-d^-(X)=d^+(Y)-d^-(X)\,,$$
that is, $d^+(Y-X)+d^-(X)\le d^+(Y)$.\\
On the other hand, the complement of $X$ is $Z\cup(Y-X)$ and thus
$$d^-(X)\stackrel{\text{by (Dn4)}}{=}1-d^+(Z\cup(Y-X))\stackrel{\text{by \eqref{e:d+}}}{\ge}1-d^+(Z)-d^+(Y-X)=d^-(Y)-d^+(Y-X)\,,$$
that is, $d^+(Y-X)+d^-(X)\ge d^-(Y)$.\\
Therefore, if the density of $Y$ exists then one has $d(Y)=d^-(X)+d^+(Y-X)$. \end{prf} 

\subsubsection{Densities on $\da^n$} \label{sss:dnda} Let $d=(d^+,d^-)$ be a density on $D^n$. We extend the functions $d^+$ and $d^-$ (and hence the notion of density) to the power set of $\da^n$, by putting $d^*(X):=d^*(X\cap D^n)$ for any $X\subseteq\da^n$.

\begin{lem} \label{l:d=m0} Assume $d=(d^+,d^-)$ is a density on $D^n$. Then for every $X\subseteq\da^n$ and every $\gota\in\cali(D)$, the density of $X_\gota$ exists and it satisfies
\begin{equation} \label{e:d=m0} d(X_\gota)=\mu(X_\gota)=\frac{|\hpi_\gota(X)|}{\|\gota\|^n}\,. \end{equation} \end{lem}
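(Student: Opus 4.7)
The plan is to reduce both quantities in \eqref{e:d=m0} to counting cosets of $\gota$ in $D^n$ (equivalently, cosets of $\hat\gota=\gota\da^n$ in $\da^n$). Let $k:=|\hpi_\gota(X)|$, which is finite because $(\da/\hat\gota)^n=(D/\gota)^n$ is finite by (A2). Since $D$ is dense in $\da$, the image of $D^n$ under $\hpi_\gota$ coincides with the image of $\da^n$, so we can choose representatives $b_1,\dots,b_k\in D^n$ with $\hpi_\gota(X)=\{\pi_\gota(b_1),\dots,\pi_\gota(b_k)\}$. By definition of $X_\gota$ we then have the disjoint unions
\begin{equation*}
X_\gota=\bigsqcup_{i=1}^{k}(b_i+\gota\da^n),\qquad X_\gota\cap D^n=\bigsqcup_{i=1}^{k}(b_i+\gota D^n).
\end{equation*}

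For the measure equality, each coset $b_i+\gota\da^n$ has measure $\|\gota\|^{-n}$ by translation invariance of $\mu$ and \eqref{e:hrdn}, so additivity gives $\mu(X_\gota)=k/\|\gota\|^n$.

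For the density, I would first note that by (Dn6) and (Dn7) each coset $b_i+\gota D^n$ has density equal to $\|\gota\|^{-n}$ (Lemma \ref{l:dn7} already guarantees that (Dn7) holds for every ideal, so even the non-principal case is covered). Applying the upper-density subadditivity \eqref{e:d+} iteratively to the finite disjoint union yields
\begin{equation*}
d^+(X_\gota)\le\sum_{i=1}^{k}d^+(b_i+\gota D^n)=\frac{k}{\|\gota\|^n},
\end{equation*}
while the lower-density superadditivity \eqref{e:d-} gives the reverse inequality
\begin{equation*}
d^-(X_\gota)\ge\sum_{i=1}^{k}d^-(b_i+\gota D^n)=\frac{k}{\|\gota\|^n}.
\end{equation*}
Combined with (Dn2) this forces $d^+(X_\gota)=d^-(X_\gota)=k/\|\gota\|^n$, so the density exists and equals the common value.

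There is no real obstacle here: the only points that need a moment's care are the identification of $X_\gota\cap D^n$ as a union of exactly $k$ cosets (which uses the surjectivity of $\pi_\gota$ onto $\hpi_\gota(\da^n)$, coming from density of $D$ in $\da$) and the invocation of (Dn7) for an arbitrary, possibly non-principal ideal $\gota$, which is exactly what Lemma \ref{l:dn7} provides.
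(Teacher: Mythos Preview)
Your proof is correct and follows essentially the same approach as the paper: decompose $X_\gota$ as a finite disjoint union of cosets of $\gota\da^n$, use (Dn6)--(Dn7) to get the density of each coset, and use (Dn5) together with (Dn2) to conclude. Your write-up is somewhat more explicit (you spell out the choice of representatives in $D^n$ via density of $D$ in $\da$, and you separate the $d^+$ and $d^-$ inequalities), but the argument is the same. One minor remark: you need not invoke Lemma \ref{l:dn7} for non-principal $\gota$, since (Dn7) as stated already covers every ideal; that lemma is only used in the paper to reduce the verification of (Dn7) to the principal case when checking that a specific candidate is a density.
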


\begin{prf} Recall that $X_\gota$ is a finite union of disjoint cosets of $\gota\da^n$. By (Dn7) and \eqref{e:hrdn}, we see that $d(\gota\da^n)$ exists and is equal to $\mu(\gota\da^n)$. The Haar measure is translation-invariant by definition and (Dn6) shows that so is the density. Finally, $\mu$ is additive on finite disjoint unions and (Dn5) implies that the same applies to $d$, if $d^+$ and $d^-$ coincide for each set in the union. \end{prf}

\begin{rmk} \label{r:dn8} Lemma \ref{l:d=m0} is the only reason why we postulated conditions (Dn6) and (Dn7). So they could be replaced by a request that \eqref{e:d=m0} holds for every $X$ and $\gota$. (Actually, if this happens then it is obvious that (Dn7) and at least a weak form of (Dn6) must be true.) \end{rmk}

\subsection{Examples of densities} \label{ss:egdns} We shall quickly discuss a few instances of pairs $(d^+,d^-)$ as in \S\ref{sss:dns}. For more (including generalizations to global Dedekind domains of the most commonly used densities over $\N$) see  \cite{dl3}.

\subsubsection{Buck density} \label{sss:Bd} Let $\cald_0\subset2^{D^n}$ consist of the subsets of the form $D^n\cap C$ where $C\subseteq\da^n$ is compact open: then $\cald_0$ is a subalgebra of $2^{D^n}$. By Lemma \ref{l:cmpopn}, one has 
$$\cald_0=\big\{\pi_\gota^{-1}(A)\mid\gota\in\cali(D)\text{ and }A\subseteq(D/\gota)^n\big\}\,.$$

Thus the density of $B\in\cald_0$ is fixed by Lemma \ref{l:d=m0} independently of the choice of $d$ (as long as (Dn1)-(Dn7) hold). We shall denote this value by $d(B)$. Following \cite{buck} (and simplifying the treatment along the lines of \cite[\S4]{lonied}), one can define the Buck density of $X\subseteq D^n$ by
$$\dsbkp(X):=\inf\big\{d(B)\mid B\in\cald_0\text{ and }X\subseteq B\big\}$$
and
$$\dsbkm(X):=\sup\big\{d(B)\mid B\in\cald_0\text{ and }X\supseteq B\big\}\,.$$  
Readers can check that $\dsbk$ satisfies conditions (Dn1)-(Dn7). More interestingly, Lemma \ref{l:d=m0} and formula \eqref{e:limidl} show that the equality
\begin{equation} \label{e:bckms} \dsbkp(X)=\mu(\xa) \end{equation}
is true for every $X\in2^{D^n}$.

Buck's original goal was to define an algebra of measurable subsets of $\N$. Following the approach in \cite{buck} (and generalized for example in \cite{lonied} and \cite{fptw}), let $X$ be a subset of $D^n$ and $Y=D^n-X$ its complement. We say that $X$ is {\em Buck-measurable} if $\dsbkp(X)+\dsbkp(Y)=1$; equivalently, $X$ is Buck-measurable if $\dsbkp(X)=\dsbkm(X)$. It is not hard to check that Buck-measurable sets form a subalgebra $\cald_{\rm Bk}$ of $2^{D^n}$ and that $\dsbkp$ is a finitely additive measure on $\cald_{\rm Bk}$; moreover, $\cald_{\rm Bk}$ is the Carath\'eodory closure of $\cald_0$ with respect to $\dsbkp$\,. \footnote{This means that it is the largest subalgebra $\cala\subseteq2^{D^n}$ such that for every $X\in\cala$ and every $\varepsilon>0$ one can find $A,B\in\cald_0$ satisfying $A\subseteq X\subseteq B$ and $\dsbkp(B-A)<\varepsilon$.}

\begin{rmks} \begin{itemize} \item[]
\item[{\bf 1.}] For $X\subseteq\N$, equality \eqref{e:bckms} appeared already in \cite[Theorem 1]{pst3}; this equality is also the main result in \cite{ok} (which mentions it as already 
known to Mauclaire, with a different proof in \cite{mcl6}).
\item[{\bf 2.}] The definition of Buck density works for any $D$ as in Assumption \ref{a:cntb} and can be extended to more general rings (see e.g.~\cite{fptw}). All other definitions 
of densities we are aware of assume that $D$ is a global Dedekind domain (with the set $S$ of places of $F$ not corresponding to prime ideals of $D$ playing a vital role in the constructions). \end{itemize} \end{rmks}

\subsubsection{Asymptotic densities} \label{sss:asnt} When $D$ is the ring of $S$-integers in a global field $F$, there is a natural embedding of it as a lattice in 
$$F_S:=\prod_{v\in S}F_v\,.$$
For every $v\in S$, let $|\cdot|_v$ be a normalized valuation on $F_v$ (in the sense of \cite[\S7 and \S11]{cas}): i.e., $|\cdot|_v$ is the standard absolute value if $F_v\simeq\R$, its square if $F_v\simeq\C$ and is defined by $|t|_v=q^{-1}$ if $F_v$ is non-archimedean, its residue field has $q$ elements and $t$ is a uniformizer. In the following, we shall say that $r\in F^*$ is sufficiently large if $\min_{v\in S}\{|r|_v\}\gg0$ and $r\rightarrow\infty$ will be a shortening for
\begin{equation} \label{e:rsffcbg} \min_{v\in S}\{|r|_v\}\rightarrow\infty\,. \end{equation} 

Let $B\subset F_S^n$ be any set with the following properties: \begin{itemize}
\item[(B1)] there is a positive constant $c\in\R$ such that, as,
\begin{equation} \label{e:dnsvlm} |D^n\cap rB|=c\prod_{v\in S}|r|_v^n+o\left(\prod_{v\in S}|r|_v^n\right) \end{equation}
where $r$ varies in $F^*$ and the small-$o$ notation refers to the growth of $r$,  in the sense of \eqref{e:rsffcbg};
\item[(B2)] for $r$ varying in $F^*$ and any fixed $b\in D^n$, the chain of inclusions
\begin{equation} \label{e:inclbx} r_1B\subseteq rB\cap(-b+rB)\subseteq rB\cup(-b+rB)\subseteq r_2B\,. \end{equation}
can be achieved with $r_1,r_2\in F^*$ which both satisfy $|r|_v-|r_i|_v=o(|r|_v)$ for every $v\in S$. \end{itemize}
Then the asymptotic density with respect to $B$ is defined by
\begin{equation} \label{e:d-asnt}
\dsbasp(X):=\limsup_{r\rightarrow\infty}\frac{|X\cap rB|}{|D^n\cap rB|}\;\;\text{ and }\;\;\dsbasm(X):=\liminf_{r\rightarrow\infty}\frac{|X\cap rB|}{|D^n\cap rB|}\;, \end{equation} 
where the limit is taken as $r$ varies in $F^*$.

\begin{prop} If {\em (B1)} and {\em (B2)} hold, the pair $(\dsbasp,\dsbasm)$ of \eqref{e:d-asnt} satisfies conditions {\em (Dn1)-(Dn7)}. \end{prop}

\begin{prf} It is straightforward to obtain (Dn1)-(Dn5) from \eqref{e:d-asnt}. As for the last two conditions, they follow from \eqref{e:dnsvlm} and respectively 
\begin{equation} \label{e:+b} |(b+X)\cap rB)|-|X\cap rB|=o\left(\prod_{v\in S}|r|_v^n\right) \end{equation}
and
\begin{equation} \label{e:k&rB} |\gota D^n\cap rB|=\frac{|D^n\cap rB|}{\|\gota\|^n}+o\left(\prod_{v\in S}|r|_v^n\right)\,. \end{equation}
In order to check \eqref{e:+b}, note that its left-hand side is bounded by the number of points of $D^n$ contained in the symmetric difference $rB\Delta(-b+rB)$. Together with \eqref{e:inclbx}, this yields the chain of inequalities
$$|(b+X)\cap rB)|-|X\cap rB|\le|D^n\cap(rB\Delta(-b+rB))|\le|D^n\cap r_2B|-|D^n\cap r_1B|\,,$$
where the magnitude of the last term is controlled by conditions (B1) and (B2).

Finally, if $\gota=aD$ is principal then \eqref{e:k&rB} is an immediate consequence of \eqref{e:dnsvlm} together with the obvious bijection between $aD^n\cap rB$ and $D^n\cap a^{-1}rB$. In the general case, just fix $a\in\gota$: then $\gota D^n$ can be written as a finite union of cosets of the form $aD^n+b$ and one obtains \eqref{e:k&rB} by applying \eqref{e:+b} to each of them. \end{prf}
  
\begin{rmk} Let $\mu_S$ be a Haar measure on $F_S^n$. The normalized valuations $|\cdot|_v$ are defined so that one has
$$\mu_S(rA)=\mu_S(A)\cdot\prod_{v\in S}|r|_v^n$$
for any measurable set $A\subseteq F_S^n$ and $r\in F^*$. Thus \eqref{e:dnsvlm} just says that the number of $D^n$-points in $B$ is proportional to its volume. \end{rmk}

Sets satisfying conditions (B1) and (B2) can be obtained by standard techniques of geometry of numbers. For example, fix a norm $\nu_v$ on $F_v^n$ for every $v\in S$ and let $B_v\subset F_v^n$ be the closed unit ball with respect $\nu_v$: then
$$B:=\prod_{v\in S}B_v\subset F_S^n$$  
enjoys the desired properties. We refer to \cite{dl3} for a detailed proof, as well as for alternative definitions of asymptotic densities. (Some special cases can be found in \cite[Proposition 2.11]{sch}.)

\subsubsection{Analytic density} \label{sss:anlt} Here we just consider the setting $n=1$.

We assume that $F$ is a global field and that $D^*$ is finite: by Dirichlet's unit theorem this happens exactly when $D$ is the ring of $S$-integers with $|S|=1$ (in the number field 
case, this means that $F$ is either $\Q$ or a quadratic imaginary extension of $\Q$). For $s$ positive real the series
$$\zeta_X^*(s):=\sum_{k\in X-\{0\}}\frac{1}{\|k\|^s}$$
is dominated by $|D^*|\cdot\zeta_F(s)$ (the Dedekind zeta function) and therefore it converges for every $s>1$. Moreover, a standard argument shows that $\zeta_D^*$ has a pole in $s=1$.

The analytic (or Dirichlet) density $\dsans$ is defined by taking
\begin{equation} \label{e:d-anlt} \dsans^+(X):=\limsup_{s\rightarrow1}\frac{\zeta_X^*(s)}{\zeta_D^*(s)}\;\;\text{ and }\;\;\dsans^-(X):=\liminf_{s\rightarrow1}\frac{\zeta_X^*(s)}{\zeta_D^*(s)}\,.  \end{equation}
It is not hard to check that conditions (Dn1)-(Dn7) are satisfied.

\begin{rmk} We will define a quite different notion of analytic density in Section \ref{s:anlt}. \end{rmk}

\subsection{Density and measure} \label{sss:dm} Throughout the rest of this section, we assume that we are given a density $d$ on $D^n$ in the sense of of \S\ref{sss:dns}: that is, there are two functions $d^+$, $d^-$ satisfying all the conditions (Dn1)-(Dn7).

\begin{lem} \label{l:e>d} Let $\calt$ be a subset of $\cals(D)$ having $0$ as an accumulation point and $X$ a subset of $\da^n$. Assume that for every $\sigma\in\calt$ the density of $X_\sigma$ exists and satisfies $d(X_\sigma)=\mu(X_\sigma)$. Then one has the inequality
\begin{equation} \label{e:m>d} d^+(X)\le\mu(\xa). \end{equation} \end{lem}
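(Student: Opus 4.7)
The plan is to exploit the defining inclusion $X \subseteq X_\sigma$, valid for every $\sigma \in \cals(D)$ by construction of $X_\sigma = \hpi_\sigma^{-1}\bigl(\overline{\hpi_\sigma(X)}\bigr)$. This nests $X$ inside a family of ``tractable'' sets whose upper density we already control via the hypothesis, and the conclusion then follows by letting $\sigma$ shrink to $0$.

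Concretely, I would first invoke the monotonicity property (Dn3) to the inclusion $X \subseteq X_\sigma$, getting
\[ d^+(X) \le d^+(X_\sigma) \quad \text{for every } \sigma \in \calt. \]
By hypothesis the density of $X_\sigma$ exists and coincides with its Haar measure, so $d^+(X_\sigma) = d(X_\sigma) = \mu(X_\sigma)$. This yields the pointwise inequality
\[ d^+(X) \le \mu(X_\sigma) \quad \text{for every } \sigma \in \calt. \]

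The final step is to pass to the limit along $\calt$. Since $0$ is an accumulation point of $\calt$, Lemma \ref{l:msrlmt} gives $\mu(\xa) = \lim_{\sigma \to 0}\mu(X_\sigma)$ with $\sigma$ varying in $\calt$. As $d^+(X)$ is a fixed real number bounded above by $\mu(X_\sigma)$ throughout the net, we conclude $d^+(X) \le \mu(\xa)$, which is \eqref{e:m>d}.

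There is no real obstacle here: the argument is just monotonicity of $d^+$ combined with the description of $\xa$ as the ``limit'' of the $X_\sigma$ encoded in Lemma \ref{l:msrlmt}. The only minor subtlety is to remember that $d^\pm$ on subsets of $\da^n$ is defined via $d^\pm(Y) = d^\pm(Y\cap D^n)$ (see \S\ref{sss:dnda}), which is consistent with the inclusion $X\cap D^n \subseteq X_\sigma \cap D^n$ needed when applying (Dn3).
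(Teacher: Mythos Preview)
Your proof is correct and follows essentially the same approach as the paper: use the inclusion $X\subseteq X_\sigma$ together with (Dn3) to bound $d^+(X)$ by $d(X_\sigma)=\mu(X_\sigma)$, then pass to the limit along $\calt$ via Lemma~\ref{l:msrlmt}. The paper phrases the final step with a $\limsup$, but since the limit exists by Lemma~\ref{l:msrlmt} this makes no difference.
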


\begin{prf} The tautological inclusion $X\subseteq X_\sigma$ implies $d^+(X)\le d^+(X_\sigma)=d(X_\sigma)$ for every $\sigma\in\calt$ and hence
$$d^+(X)\le\limsup_{\sigma\rightarrow0\,,\,\sigma\in\calt}d(X_\sigma)=\limsup_{\sigma\rightarrow0\,,\,\sigma\in\calt}\mu(X_\sigma)=\mu(\xa)$$
where the last equality follows from Lemma \ref{l:msrlmt}. \end{prf}

\begin{cor} \label{c:m>d} The inequality \eqref{e:m>d} holds for every $X\subseteq\da^n$. \end{cor}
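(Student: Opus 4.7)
The plan is to deduce this corollary immediately from Lemma \ref{l:e>d} by producing a suitable $\calt\subseteq\cals(D)$ that meets its hypotheses. The natural candidate is $\calt=\cali(D)$, viewed as a subset of $\cals(D)$ via the embedding $\gota\mapsto\hat\gota$. By Remark \ref{r:idl}.1, $0$ is an accumulation point of $\cali(D)$ in $\cals(D)$, so the topological hypothesis of Lemma \ref{l:e>d} is satisfied.

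The second hypothesis of Lemma \ref{l:e>d} — that for every $\sigma\in\calt$ the density of $X_\sigma$ exists and coincides with $\mu(X_\sigma)$ — is precisely the content of Lemma \ref{l:d=m0} when $\sigma=\hat\gota$ is the completion of an ideal $\gota\in\cali(D)$. Indeed, that lemma gives
\[
d(X_\gota)=\mu(X_\gota)=\frac{|\hpi_\gota(X)|}{\|\gota\|^n}
\]
with no restriction on $X\subseteq\da^n$, so the condition holds automatically for every $\sigma\in\calt$.

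Therefore all hypotheses of Lemma \ref{l:e>d} are verified and we conclude $d^+(X)\le\mu(\xa)$ for every $X\subseteq\da^n$. There is no real obstacle here: the corollary is essentially a packaging of Lemma \ref{l:d=m0} (which supplies the pointwise equality $d(X_\gota)=\mu(X_\gota)$) and Remark \ref{r:idl}.1 (which supplies a witnessing family $\calt$ with $0$ as an accumulation point) through the mechanism of Lemma \ref{l:e>d}.
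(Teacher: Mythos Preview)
Your proof is correct and follows exactly the same approach as the paper: take $\calt=\cali(D)$, invoke Remark \ref{r:idl}.1 for the accumulation-point condition, and Lemma \ref{l:d=m0} for the equality $d(X_\gota)=\mu(X_\gota)$, then apply Lemma \ref{l:e>d}.
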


\begin{prf} By Remark \ref{r:idl}.1 we can take $\calt=\cali(D)$. Then Lemma \ref{l:d=m0} ensures that Lemma \ref{l:e>d} can be applied for every $X\subseteq\da^n$. \end{prf}

It might be worth stating explicitly the following obvious consequence of Corollary \ref{c:m>d}.

\begin{cor} \label{c:m=0} For any $X\subseteq\da^n$, if $\mu(\xa)=0$ then also $d(X)=0$. \end{cor}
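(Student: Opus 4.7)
The plan is to derive this as a one-line consequence of Corollary \ref{c:m>d}, which already hands us the inequality $d^+(X) \le \mu(\xa)$ for every $X \subseteq \da^n$. Under the hypothesis $\mu(\xa) = 0$ this immediately forces $d^+(X) \le 0$. Since by definition $d^+$ takes values in $[0,1]$ (in particular it is non-negative), we conclude $d^+(X) = 0$.

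Next, I would invoke axiom (Dn2), which gives $d^-(X) \le d^+(X) = 0$; combined with $d^-(X) \ge 0$ (again from the codomain being $[0,1]$), this yields $d^-(X) = 0$. Therefore $d^+(X) = d^-(X) = 0$, so by the convention introduced in \S\ref{sss:dns} the density $d(X)$ exists and equals $0$.

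There is no real obstacle here: the statement is essentially a tautological corollary, with the only ingredients being Corollary \ref{c:m>d} together with the sign conventions built into the definition of a density (namely axiom (Dn2) and the fact that $d^\pm$ take values in $[0,1]$). The substantive content was already absorbed into Corollary \ref{c:m>d}, whose proof in turn relied on Lemma \ref{l:e>d}, Lemma \ref{l:d=m0} and Lemma \ref{l:msrlmt}.
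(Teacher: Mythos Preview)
Your proof is correct and matches the paper's approach exactly: the paper simply states this as an ``obvious consequence of Corollary \ref{c:m>d}'' without spelling out the details, and your argument (combining \eqref{e:m>d} with the codomain $[0,1]$ and axiom (Dn2)) is precisely the intended one-line verification.
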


\begin{rmks} \label{r:m>d} Versions of inequality \eqref{e:m>d} and Corollary \ref{c:m>d} have been discovered many times - see for example \cite[Theorem 3]{nov0}, \cite[Remark (iv) on page 201]{lonied}, \cite[Lemma 1.1]{ekd} and \cite[Theorem 3]{lt}. \begin{enumerate}
\item[{\bf 1.}] The inequality proved by Ekedahl in \cite[Lemma 1.1]{ekd} can be written as $d(\Sigma)\le\mu(\Sigma)$, where $\Sigma\subseteq\za^n$ is any measurable set (and $d$ an asymptotic density). However, the condition of measurability is too weak: for example, $\N$ is a measurable subset of $\za$, with $\mu(\N)=0$ (since it is countable) and its density is 1 (counting as in \cite{ekd}). The first statement of Ekedahl's proof shows that the number $\displaystyle A=\lim_{M\rightarrow\infty}c_M/(M!)^n$ given as $\mu(\Sigma)$ is really $\mu(\widehat\Sigma)$.
\item[{\bf 2.}] As shown in \cite[Theorem 3]{lt}, the proof of Corollary \ref{c:m>d} does not really require the full strength of properties (Dn1)-(Dn7); actually, it is enough that the 
Leonetti-Tringali axioms (F1), (F3) and (F6) hold. In \cite{lt}, pairs $(d^+,d^-)$ satisfying these weaker conditions are called quasi-densities. Corollary \ref{c:m=0} can be strengthened to ``quasi-densities'' - see \cite[Theorem 2.4]{lt2}.  \end{enumerate} \end{rmks}

Let $\partial Z$ denote the boundary of $Z\subseteq\da^n$.

\begin{lem} \label{l:brtt} Let $X$ be a subset of $\da^n$ and assume $\mu(\partial\xa)=0$. Then the density of the set $\xa$ exists and it is equal to $\mu(\xa)$. \end{lem}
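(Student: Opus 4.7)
The plan is to prove the two inequalities $d^+(\xa) \le \mu(\xa)$ and $d^-(\xa) \ge \mu(\xa)$, which together with (Dn2) will force the density of $\xa$ to exist and to equal $\mu(\xa)$.

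For the upper bound I would simply invoke Corollary \ref{c:m>d} applied to $\xa$ itself. Since $\xa$ is closed, its closure in $\da^n$ is $\xa$, so the corollary immediately yields $d^+(\xa) \le \mu(\xa)$. This step is essentially free and does not use the hypothesis on the boundary at all.

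For the lower bound, the first move is to pass to the complement via (Dn4): writing $D^n \cap W = D^n - (D^n \cap \xa)$, one obtains
\[
d^-(\xa) = d^-(\xa \cap D^n) = 1 - d^+(D^n \cap W) = 1 - d^+(W),
\]
where the last equality uses the convention in \S\ref{sss:dnda}. A second application of Corollary \ref{c:m>d}, this time to the set $W$, gives $d^+(W) \le \mu(\hat W)$. The key computation is now to show $\mu(\hat W) = \mu(W) = 1 - \mu(\xa)$, which is where the hypothesis enters: since $\da^n = \xa \sqcup W$ and $W \subseteq \hat W$, we have the decomposition $\hat W = W \cup (\hat W \cap \xa)$, hence
\[
\mu(W) \le \mu(\hat W) \le \mu(W) + \mu(\hat W \cap \xa) = \mu(W) + 0,
\]
by the assumption $\mu(\xa \cap \hat W) = 0$. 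Combining these gives $d^+(W) \le 1 - \mu(\xa)$, and therefore $d^-(\xa) \ge \mu(\xa)$.

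Together with the upper bound and (Dn2) one gets $\mu(\xa) \le d^-(\xa) \le d^+(\xa) \le \mu(\xa)$, so all four quantities agree and the density of $\xa$ is $\mu(\xa)$. I do not expect any real obstacle here: the only substantive input is the boundary hypothesis, used precisely once to kill $\mu(\hat W \cap \xa)$, and everything else is bookkeeping with (Dn2), (Dn4) and Corollary \ref{c:m>d}.
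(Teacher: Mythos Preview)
Your proof is correct and follows essentially the same route as the paper: both arguments use Corollary \ref{c:m>d} once on $\xa$ and once on $W$, combine with (Dn4) to get $d^-(\xa)=1-d^+(W)$, and invoke the hypothesis $\mu(\xa\cap\hat W)=0$ to obtain $\mu(\hat W)=1-\mu(\xa)$, closing the chain $\mu(\xa)\ge d^+(\xa)\ge d^-(\xa)\ge\mu(\xa)$. Your decomposition $\hat W=W\sqcup(\hat W\cap\xa)$ is just a rephrasing of the paper's inclusion--exclusion identity $\mu(\xa)+\mu(\hat W)=\mu(\xa\cup\hat W)+\mu(\xa\cap\hat W)$.
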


\begin{prf} Put $W=\da^n-\xa$, so to have $\partial\xa=\xa\cap\widehat W$. The obvious equality 
$$\mu(\xa)+\mu(\widehat W)=\mu(\xa\cup\widehat W)+\mu(\xa\cap\widehat W)$$ 
and the hypothesis $\mu(\xa\cap\widehat W)=0$ imply $\mu(\xa)+\mu(\widehat W)=1.$ Moreover, $d^-(\xa)+d^+(W)=1$, by (Dn4). The claim follows from the chain of inequalities
$$\mu(\xa)\stackrel{\text{by \eqref{e:m>d}}}{\ge}d^+(\xa)\ge d^-(\xa)=1-d^+(W)\stackrel{\text{by \eqref{e:m>d}}}{\ge}1-\mu(\widehat W)=\mu(\xa)\,.$$
\end{prf}

\begin{prop} \label{p:intrs} Let $X$ be a subset of $D^n$ and put $Y=D^n-X$. Assume $\mu(\xa\cap\widehat Y)=0$. Then the density of the set $X$ exists and it is equal to $\mu(\xa)$. \end{prop}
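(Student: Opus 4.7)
The plan is to mirror the argument of Lemma \ref{l:brtt}, but with one crucial bookkeeping change: here the complement is taken inside $D^n$ rather than inside $\da^n$, so I first need to exploit the density of $D^n$ in $\da^n$ to recover an analogue of the decomposition $\da^n = \xa \cup \hat W$ used there.

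First I would observe that, since closure commutes with finite unions, the equality $X \cup Y = D^n$ together with the density of $D^n$ in $\da^n$ gives
\[
\xa \cup \hat Y \;=\; \overline{X \cup Y} \;=\; \overline{D^n} \;=\; \da^n.
\]
Combining this with the standard inclusion-exclusion identity for the Haar measure and with the hypothesis $\mu(\xa \cap \hat Y) = 0$ yields
\[
\mu(\xa) + \mu(\hat Y) \;=\; \mu(\xa \cup \hat Y) + \mu(\xa \cap \hat Y) \;=\; 1.
\]

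Next I would squeeze $d^-(X)$ and $d^+(X)$ between $\mu(\xa)$ on both sides. Applying Corollary \ref{c:m>d} to $Y$ gives $d^+(Y) \le \mu(\hat Y)$, while (Dn4) applied to the partition $D^n = X \sqcup Y$ gives $d^-(X) + d^+(Y) = 1$. Hence
\[
d^-(X) \;=\; 1 - d^+(Y) \;\ge\; 1 - \mu(\hat Y) \;=\; \mu(\xa).
\]
On the other hand, by (Dn2) and Corollary \ref{c:m>d} applied to $X$ itself,
\[
d^-(X) \;\le\; d^+(X) \;\le\; \mu(\xa).
\]
Chaining the two estimates forces equality throughout, so the density $d(X)$ exists and equals $\mu(\xa)$, as claimed.

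There is no real obstacle: the only point worth flagging is the passage through $\hat X \cup \hat Y = \da^n$, which is what replaces the tautological identity $\xa \cup \hat W = \da^n$ used in Lemma \ref{l:brtt}. Once that identity is in hand, the proof is a direct application of (Dn2), (Dn4) and Corollary \ref{c:m>d}, exactly as in Lemma \ref{l:brtt}.
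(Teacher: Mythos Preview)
Your proof is correct. The approach is essentially the same squeeze argument as in Lemma \ref{l:brtt}, but you apply it more directly than the paper does. The paper first invokes Lemma \ref{l:brtt} (with $W=\da^n-\xa$) to obtain $d(\xa)=\mu(\xa)$, which requires the intermediate observation that $\hat W\subseteq\hat Y$ via the open-set argument; it then needs a second step, using Corollary \ref{c:m=0} to kill $d(\xa\cap Y)$ and pass from $d(\xa)$ to $d(X)$. You bypass both steps by running the inequality chain directly on the pair $X,Y\subseteq D^n$, with the identity $\xa\cup\hat Y=\da^n$ (from density of $D^n$) replacing the tautology $\xa\cup W=\da^n$. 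This is a genuine simplification: same ingredients, one fewer layer of indirection.
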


\begin{prf} Let $W$ be as in the proof of Lemma \ref{l:brtt} and put $V=W\cap D^n$. We have $\widehat W=\widehat V$ because $W$ is open and $D^n$ is dense; moreover $V\subseteq Y$ yields $\widehat W\subseteq\widehat Y$. Hence $\mu(\partial\xa)=0$ and $\xa$ has density $\mu(\xa)$, by Lemma \ref{l:brtt}. By Corollary \ref{c:m=0}, the inclusion $\xa\cap Y\subseteq\xa\cap\widehat Y$ yields $d(\xa\cap Y)=0$. Therefore 
$$d(\xa)=d(\xa\cap D^n)\stackrel{\text{by \eqref{e:d+-}}}{=}d(\xa\cap Y)+d(X)=d(X)\,.$$
\end{prf}

\begin{rmk} \label{r:bmsr} As above, assume $D^n=X\sqcup Y$. Then $\da^n=\xa\cup\widehat Y$ and hence we have
\begin{equation} \label{e:bckbnl} \mu(\xa)+\mu(\widehat Y)=\mu(\xa\cup\widehat Y)+\mu(\xa\cap\widehat Y)=1+\mu(\xa\cap\widehat Y). \end{equation} 
Remembering \S\ref{sss:Bd}, it is straightforward to see from \eqref{e:bckbnl} that $X$ is Buck-measurable if and only if $\mu(\xa\cap\widehat Y)=0$. Therefore Proposition \ref{p:intrs} proves that the density exists for every Buck-measurable set, whatever choice of $d$ has been made, as long as (Dn1)-(Dn7) hold. One can easily check that the Buck algebra $\cald_{\rm Bk}$ is the maximal subalgebra of $2^{D^n}$ such that for any of its elements the density exists and it equals the Haar measure of the closure. 

Furthermore the inclusion $\partial\xa\subseteq\xa\cap\widehat Y$ (which becomes an equality if $X$ is closed, i.e., if $X=\xa\cap D^n$) shows that the condition $\mu(\partial\xa)=0$ is necessary (and sufficient, if $X$ is closed) in order to have $X$ Buck-measurable. This explains why Buck's theory cannot be used to study the density, for example, of the set $X$ of square-free integers in $\Z$ (or in $\N$), as we mentioned in the introduction. Such an $X$ is indeed closed in $\Z$ (respectively $\N$), but $\xa$ has empty interior (as we shall see as a consequence of Corollary \ref{c:kpfree}), so that $\partial\xa=\xa$; and one finds $\mu(\xa)>0$. \end{rmk}

\begin{thm} \label{t:mt} Let $\calt$ be a subset of $\cals(D)$ having $0$ as an accumulation point and $X$ a subset of $D^n$. Assume that for every $\sigma\in\calt$ the density of $X_\sigma$ exists and satisfies $d(X_\sigma)=\mu(X_\sigma)$. Then the equality
\begin{equation} \label{e:banale} \lim_{\sigma\rightarrow0} d^+(X_\sigma-X)=0 \end{equation}
holds if and only if $d(X)$ exists and it satisfies $d(X)=\mu(\xa)$. \end{thm}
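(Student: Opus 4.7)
The plan is to use the complementation formula \eqref{e:d+-} applied to the pair $(X, X_\sigma)$ as $\sigma$ ranges over $\calt$. The hypothesis is tailor-made for this: it guarantees that every approximating set $X_\sigma$ has a genuine density which coincides with its Haar measure, so \eqref{e:d+-} becomes a workable identity. Combined with Lemma \ref{l:msrlmt} (the measure of $\xa$ is the limit of $\mu(X_\sigma)$ along any $\calt$ with $0$ as accumulation point) and Corollary \ref{c:m>d} (the inequality $d^+(X)\le \mu(\xa)$), the equivalence should drop out with essentially no additional work.

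In detail, first I would extend $X$ to a subset of $D^n$ by intersecting with $D^n$ (which changes nothing, since $X\subseteq D^n$ already, and recall that our density on subsets of $\da^n$ is defined via restriction to $D^n$ in \S\ref{sss:dnda}, so in particular $d(X_\sigma)=d(X_\sigma\cap D^n)$). Then for each $\sigma\in\calt$ the inclusion $X\subseteq X_\sigma\cap D^n$ and the assumption that $X_\sigma$ has a density allow me to invoke \eqref{e:d+-}, obtaining
\begin{equation*}
d^-(X)\;=\;d(X_\sigma)-d^+(X_\sigma-X)\;=\;\mu(X_\sigma)-d^+(X_\sigma-X).
\end{equation*}
The key observation is that the left-hand side is independent of $\sigma$, while on the right both summands depend on $\sigma$ and their behaviour as $\sigma\to 0$ is controlled.

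Now the two directions become symmetric. If $\lim_{\sigma\to 0}d^+(X_\sigma-X)=0$, then Lemma \ref{l:msrlmt} gives $d^-(X)=\mu(\xa)$; coupling this with the inequality $d^+(X)\le \mu(\xa)$ from Corollary \ref{c:m>d} and the trivial $d^-(X)\le d^+(X)$ from (Dn2) yields $d(X)=\mu(\xa)$. Conversely, if $d(X)$ exists and equals $\mu(\xa)$, then the same identity reads $d^+(X_\sigma-X)=\mu(X_\sigma)-\mu(\xa)$, which by Lemma \ref{l:msrlmt} tends to $0$ along $\calt$.

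I do not anticipate a genuine obstacle: the whole argument is a bookkeeping exercise once \eqref{e:d+-} is available. The only subtlety worth double-checking is that the complementation formula genuinely applies with $Y=X_\sigma$, which requires $Y$ to have a density (given by hypothesis) and $X\subseteq Y$ (immediate). All the deeper content sits in the inputs we are citing: Lemma \ref{l:clsr} and Lemma \ref{l:msrlmt} on the one hand, and the duality \eqref{e:d+-} (itself a consequence of the additivity axiom (Dn5), see Remark \ref{r:axmdns}.1) on the other.
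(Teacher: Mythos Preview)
Your proof is correct and follows essentially the same route as the paper: apply the complementation identity \eqref{e:d+-} with $Y=X_\sigma$, pass to the limit along $\calt$ via Lemma \ref{l:msrlmt}, and close the argument with the a priori inequality $d^+(X)\le\mu(\xa)$. The only cosmetic difference is that you invoke Corollary \ref{c:m>d} for this inequality whereas the paper cites the slightly more specific Lemma \ref{l:e>d}, and you spell out the two directions separately while the paper compresses them into the single identity $\lim_{\sigma\to0}d^+(X_\sigma-X)=\mu(\xa)-d^-(X)$.
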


\begin{prf} By Lemma \ref{l:e>d}, the inequality \eqref{e:m>d} holds and we just need to prove that \eqref{e:banale} is equivalent to $d^-(X)=\mu(\xa)$. For this, it is enough to note that
$$d^-(X)\stackrel{\text{by \eqref{e:d+-}}}{=}d(X_\sigma)-d^+(X_\sigma-X)=\mu(X_\sigma)-d^+(X_\sigma-X)$$
is true for every $\sigma\in\calt$. Therefore \eqref{e:limmu} implies
$$\lim_{\sigma\rightarrow0}d^+(X_\sigma-X)=\lim_{\sigma\rightarrow0}\big(\mu(X_\sigma)-d^-(X)\big)=\mu(\xa)-d^-(X)\,.$$
\end{prf}

\subsubsection{Close pairs of sets} \label{sss:clssts} Let $X\Delta Y$ denote the symmetric difference of $X,Y\in2^{D^n}$. The following is essentially a restatement of \cite[Proposition 1(i)]{lt}; we include a proof for the convenience of the reader.

\begin{lem} \label{l:vc} Let $d$ be a density on $D^n$. If $d^+(X\Delta Y)=0$ then $d^+(X)=d^+(Y)$ and $d^-(X)=d^-(Y)$. \end{lem}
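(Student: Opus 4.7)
The plan is to reduce everything to showing that $X$, $Y$ and $X \cap Y$ have the same upper density and the same lower density; symmetry then immediately gives the conclusion. The starting observation is that $X - Y$ and $Y - X$ are both contained in $X \Delta Y$, so (Dn3) forces $d^+(X - Y) = d^+(Y - X) = 0$, and then (Dn2) gives $d^-(X-Y) = d^-(Y-X) = 0$ as well. The rest is axiomatic bookkeeping; I will carry it out for the pair $(X, X \cap Y)$ only, since the case $(Y, X \cap Y)$ is identical by swapping the roles of $X$ and $Y$.

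For the upper density, write $X$ as the disjoint union $(X \cap Y) \sqcup (X - Y)$. Then (Dn3) yields $d^+(X \cap Y) \le d^+(X)$, while \eqref{e:d+} from (Dn5) gives
\[
d^+(X) \le d^+(X \cap Y) + d^+(X - Y) = d^+(X \cap Y),
\]
so $d^+(X) = d^+(X \cap Y)$, and by symmetry $d^+(Y) = d^+(X \cap Y)$.

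For the lower density, the slightly less obvious direction is $d^-(X \cap Y) \ge d^-(X)$. To get it, I pass to complements: write
\[
D^n - (X \cap Y) = (D^n - X) \sqcup (X - Y).
\]
Applying \eqref{e:d+} from (Dn5) to this disjoint union, and remembering that $d^+(X - Y) = 0$, gives $d^+(D^n - (X \cap Y)) \le d^+(D^n - X)$. Now (Dn4) converts each side into a lower density of the complement, yielding $d^-(X \cap Y) \ge d^-(X)$. The reverse inequality comes directly from \eqref{e:d-} in (Dn5) applied to $X = (X \cap Y) \sqcup (X - Y)$, since $d^-(X - Y) = 0$. Hence $d^-(X) = d^-(X \cap Y)$, and a symmetric argument gives $d^-(Y) = d^-(X \cap Y)$, completing the proof.

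There is no real obstacle here: the whole argument is a mechanical manipulation of the axioms (Dn2)--(Dn5). The only step that is not entirely routine is extracting the inequality $d^-(X \cap Y) \ge d^-(X)$, which cannot be obtained by direct monotonicity of $d^-$ (the axioms do not even assert that $d^-$ is monotone in a form that helps, and in any case $X \cap Y \subseteq X$ points the wrong way); one has to take complements and exploit the subadditivity of $d^+$ together with (Dn4).
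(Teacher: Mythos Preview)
Your proof is correct. The approach for $d^+$ is essentially the same as the paper's: the paper sandwiches both $X$ and $Y$ between $X\cap Y$ and $X\cup Y$ and shows $d^+(X\cap Y)=d^+(X\cup Y)$ via (Dn3) and (Dn5), while you compare $X$ and $Y$ directly to $X\cap Y$; these are equivalent manipulations.

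For $d^-$ the paper takes a shorter route than yours: it simply observes that $(D^n-X)\Delta(D^n-Y)=X\Delta Y$, so the hypothesis $d^+(X\Delta Y)=0$ applies verbatim to the complements, and the $d^+$ case already proved gives $d^+(D^n-X)=d^+(D^n-Y)$; then (Dn4) converts this into $d^-(X)=d^-(Y)$ in one stroke. Your argument reaches the same conclusion but unpacks the complement trick by hand for each inequality. Also, a small aside: your ``reverse inequality'' $d^-(X)\ge d^-(X\cap Y)$ could equally well be obtained from (Dn3) (monotonicity of $d^-$) rather than from \eqref{e:d-}, though your route via (Dn5) is of course valid.
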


\begin{prf} By $X\cup Y=(X\cap Y)\sqcup(X\Delta Y)$ and (Dn3), (Dn5) one gets
$$d^+(X\cap Y)\le d^+(X\cup Y)\le d^+(X\cap Y)+d^+(X\Delta Y)\,.$$
Since $X$ and $Y$ are both intermediate between $X\cap Y$ and $X\cup Y$, it follows that $d^+(X\Delta Y)=0$ implies $d^+(X)=d^+(Y)$. As for the second equality, use 
$$(D^n-X)\Delta(D^n-Y)=X\Delta Y$$
and $d^-(X)=d^+(D^n-X)$. \end{prf}

Let $X\widehat\Delta Y$ denote the closure of the symmetric difference $X\Delta Y$. We say that $X$ and $Y$ are {\em close} if $\mu(X\widehat\Delta Y)=0$. 

\begin{prop} \label{p:clssts} If two subsets $X$, $Y$ of $D^n$ are close, then  the equalities $d^{\pm}(X)=d^{\pm}(Y)$ hold for any choice of $d$. Moreover, $\mu(\xa)=\mu(\widehat Y)$. \end{prop}

\begin{prf} The first statement is immediate from Corollary \ref{c:m=0} and Lemma \ref{l:vc}. As for the second one, it follows from the inclusion
$$\xa\Delta\widehat Y\subseteq(X\widehat\Delta Y)\,$$
which is proved by an elementary reasoning.  \end{prf}

We say that $X\subseteq D^n$ is {\em almost closed} if it is close to its closure $\xa\cap D^n$.

\begin{cor} \label{c:bmsr} A subset $X$ of $D^n$ is Buck-measurable if and only if it is almost closed and $\mu(\partial\xa)=0$. \end{cor}

\begin{prf} Put $Z:=\xa\cap D^n$ and $W:=X\Delta Z=Z-X$. 

Sufficiency: since $Z$ is closed, it follows by Remark \ref{r:bmsr} that it is Buck-measurable if $\mu(\partial\widehat Z)=0$, which holds because $\widehat Z=\xa$, and then Proposition 
\ref{p:clssts} implies that any set close to $Z$ is also Buck-measurable.
 
Necessity: it was proved in Remark \ref{r:bmsr} that if $X$ is Buck-measurable one must have $\mu(\partial\xa)=0$. As for being close to its closure, \eqref{e:bckms} yields
$$\dsbkp(X)=\mu(\xa)=\mu(\widehat Z)=\dsbk(Z)$$ 
and hence, by \eqref{e:d+-},
$$\dsbk(Z)=\dsbkm(X)+\dsbkp(W)=\dsbkm(X)+\mu(\widehat W)\,,$$
proving that $X$ is not Buck-measurable if $\mu(\widehat W)>0$. \end{prf}

\subsubsection{The role of closed sets} \label{sss:rlclsd} Even if not strictly necessary, when comparing $d(X)$ and $\mu(\xa)$ it is convenient to assume that $X$ is a closed (or at least almost closed) subset of $D^n$. On the one hand, if $d(X)=\mu(\xa)$ is true for $X$ then it is true also for its closure. This is immediate from the chain of inequalities
$$\mu(\xa)\ge d^+(\xa)\ge d^-(\xa)\ge d^-(X)\,,$$
which holds for every $X\subseteq D^n$ because of (Dn2), (Dn3) and Corollary \ref{c:m>d} and shows that $d(X)=\mu(\xa)$ implies $d(X)=d(\xa\cap D^n)$. 

On the other hand, if $X$ is far from being closed then there is no reason to expect equality between density and measure. In particular, if $X$ is dense in $\da^n$ the density can be 
anything. We give a few examples with $D=\Z$ and $n=1$. \begin{enumerate}
\item Let $Y$ be the set of square-free integers and $X$ its complement. It is not hard to see that $X$ is dense in $\za$ (we will prove later a slightly more general result - see Corollary \ref{c:kpfree}). The equality $\dsas(X)=1-\frac{6}{\pi^2}$ is a classical result.
\item Let $X$ be the set of all positive integers having first digit $1$ in their decimal expansion. Then $\xa=\za$, so $\mu(\xa)=1$, while $X$ has no asymptotic density and its logarithmic density exists and is strictly smaller than 1 (more precisely, in $\N$ one has $\dsasm(X)=\frac{1}{9}$, $\dsasp(X)=\frac{5}{9}$ and $\dlib_{\log}(X)=\frac{\log2}{\log10}$, as explained in \cite[pages 415 and 417]{tnm}).
\item It is easy to find a dense set $X$ with $\dsas(X)=0$. The simplest and most classical example, often mentioned in the literature, is probably $X=\{n!+n\mid n\in\N\}$. \end{enumerate}

It would be interesting to have an example of a closed $X\subseteq D^n$ and a density $d$ such that $d(X)$ exists and it is different from $\mu(\xa)$ (equivalently, the limit in \eqref{e:banale} is not $0$). By results of Davenport-Erd\H os \cite{de1} and Besicovitch \cite{bsc}, one can obtain an example of a closed $X\subseteq\Z$ such that $\mu(\xa)=\dlib_{\log}(X)=\dsasp(X)>\dsasm(X)$. See Remark \ref{r:deN}.1 below.

\subsubsection{Some counterexamples} \label{sss:cnt} 
 
The first one implies that \cite[Proposition 2.2]{ekd} is wrong. Since the latter plays a key role in the proof of \cite[Theorem 2.3]{ekd}, one should expect that also this second claim is false, as we will show. \\

\noindent 1. {\em There is no need for an open subset of $\da$ to have a density equal to its measure}. By Assumption \ref{a:cntb}, we can write $D=\{x_n\}_{n\in\N}$. For a given $\varepsilon$ in the open interval $(0,1)$, choose $a\in D$ such that 
$$\sum_{n=1}^\infty\frac{1}{|D/aD|^n}<\varepsilon$$
and, for every $n\in\N-\{0\}$, let $U_n$ be the coset $x_n+a^n\da$. Put $U=\bigcup_nU_n$. Then $U$ is an open subset of $\da$ with measure
$$\mu(U)\le\sum\mu(U_n)=\sum\frac{1}{|D/aD|^n}<\varepsilon\;.$$
On the other hand, $d(U)=d(U\cap D)=d(D)=1$ for every choice of $d$.\\ 

\noindent 2. {\em A counterexample to \cite[Theorem 2.3]{ekd}.} Let $Y$ be the closed subscheme of the affine plane on $\Z$ corresponding to the point $(0,0)$. Then the set
$$W:=\bigcap_{p\in\calp(\Z)-\{2\}}\hpi_{p\Z}^{-1}(\F_p^2-Y(\F_p))$$
consists of all pairs $(a,b)\in\za^2$ having greatest common divisor a power of $2$ (including $2^0$). It is well-known (and can be proved for example by \cite[Theorem 1.2]{ekd}) that the asymptotic density of $W$ exists, with value
$$\dsas(W)=\prod_{p\neq2}\left(1-\frac{1}{p^2}\right)=\left(1-\frac{1}{4}\right)\cdot\frac{1}{\zeta(2)}=\frac{8}{\pi^2}\,.$$
Then, according to \cite[Theorem 2.3]{ekd}, for any open subset $T$ of $\Z_2^2$ of Haar measure $t$ the set
$$X_T:=W\cap\hpi_{2^\infty}^{-1}(T)$$
should have density $\frac{8}{\pi^2}t$. However one can reason as in counterexample 1 and take $T$ containing $\Z^2$ with $t<1$ to obtain a contradiction, since for such a $T$ one has
$$X_T\cap\Z^2=W\cap\Z^2$$
and hence $\dsas(X_T)=\frac{8}{\pi^2}>\frac{8}{\pi^2}t$. 

With a little more effort, a construction similar to the one used in counterexample 3 below can be used to show that it is also possible to choose $T$ so that $X_T$ has no 
asymptotic density.\\

\noindent 3. {\em There is no reason to expect that open or closed subsets of $D^n$ have a density}. We give an example (based of an idea of Bary-Soroker) with $d=\dsan$\,, the analytic 
density in $\N$ (since the existence of $\dsan$ implies that the asymptotic density exists as well - see e.g. \cite[Theorems III.1.2 and III.1.3]{tnm}; we work in $\N$ rather than $\Z$ because it simplifies notations without affecting the argument). 

Let $A\subseteq\N$ be any set which has no analytic density (that  is, the ratio $\zeta_A(s)/\zeta(s)$ diverges in $1$, with $\zeta_A(s):=\sum_{a\in A}a^{-s}$), so that we can find $m\in\N$ satisfying
$$\frac{1}{m-1}<\dsanp(A)-\dsanm(A)\,.$$
For simplicity we assume $0\notin A$. Then
$$U=\{a+jm^a\mid a\in A, j\in\N\}=\N\cap\bigcup_{a\in A}(a+m^a\Z)$$
is a proper open subset of $\N$ (in the topology induced by the dense embedding $\N\hookrightarrow\za$). One computes
\begin{eqnarray*} \zeta_U(s):=\sum_{k\in U}\frac{1}{k^s} & = & \sum_{a\in A}\sum_{j=0}^\infty\frac{1}{(a+jm^a)^s}\le\zeta_A(s)+\sum_{j=1}^\infty\sum_{a\in A}\frac{1}{(jm^a)^s} \\
& \le & \zeta_A(s)+\sum_{j=1}^\infty\frac{1}{j^s}\sum_{n=1}^\infty\frac{1}{(m^s)^n}=\zeta_A(s)+\frac{1}{m^s-1}\,\zeta(s)\,. \end{eqnarray*}
Dividing by $\zeta(s)$ and taking the limit as $s$ goes to $1$ we obtain
$$\dsanm(U)\le\dsanm(A)+\frac{1}{m-1}<\dsanp(A)\le\dsanp(U)\,,$$
proving that $U$ has no density. Therefore the closed set $X=\N-U$ has no density too.

\section{Densities on $\cals(D)$ and the Davenport-Erd\H os Theorem} \label{s:anlt}

In this section we assume that $D$ is global (that is, it is the ring of $S$-integers in a global field).  

\subsection{Densities on the set of ideals} The general definition of density in \S\ref{sss:dns} has a straightforward adaptation to the set of ideals of $D$. Namely, a density $d$ on $\cali(D)$ is the datum of two functions 
$$d^+,\,d^-\colon 2^{\cali(D)}\longrightarrow[0,1]$$  
satisfying the following properties: \begin{itemize}
\item[(Dn$_\cali$1)-(Dn$_\cali$5)] same as (Dn1)-(Dn5), just replacing subsets of $D^n$  with subsets of $\cali(D)$;
\item[(Dn$_\cali$7)]  for every ideal $\gota$ of $D$, one has
$$d^+\big(\gota\cali(D)\big)=d^-\big(\gota\cali(D))=\frac{1}{\|\gota\|}\;,$$
where $\gota\cali(D):=\{\gotb\in\cali(D):\gotb\subseteq\gota\}$. \end{itemize}
We do not ask for an analogue of (Dn6), for the obvious reason that the additive  group $D$ does not act on the multiplicative monoid $\cali(D)$.

We saw in \S\ref{sss:sprn} that one can think of $\cali(D)$ as a dense subset of the topological space $\cals(D)$. As before, we keep the convention of using German letters for ideals in $\cali(D)$ and Greek letters for generic supernatural ideals. As in \S\ref{sss:dnda}, any density $d$ on $\cali(D)$ is extended to $\cals(D)$ by $d^*(\calx):=d^*(\calx\cap\cali(D))$.

\subsubsection{The map $\rho$ and push-forward densities} Recall the map $\rho\colon\da\longrightarrow\cals(D)$ defined in \eqref{e:ro}.

\begin{lem} \label{l:ropn} The map $\rho$ is open. \end{lem}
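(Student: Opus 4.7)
The plan is as follows. By construction, $\cals(D)$ carries the quotient topology induced by $\rho$, so a subset $V\subseteq\cals(D)$ is open if and only if $\rho^{-1}(V)$ is open in $\da$. Thus, to show $\rho$ is open, it suffices to prove that for every open $U\subseteq\da$, the saturation $\rho^{-1}(\rho(U))$ is open in $\da$.

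The key step is to identify this saturation as the orbit of $U$ under the multiplicative action of $\da^*$. Concretely, I would show
$$\rho^{-1}(\rho(U))=\da^*\cdot U:=\{ux\mid u\in\da^*,\,x\in U\}.$$
The inclusion $\da^*\cdot U\subseteq\rho^{-1}(\rho(U))$ is immediate, since $ux\cdot\da=x\cdot\da$ whenever $u\in\da^*$. For the reverse inclusion, if $y\da=x\da$ for some $x\in U$, then using the isomorphism \eqref{e:crt} and comparing valuations one has $v_\gotp(y)=v_\gotp(x)$ for every $\gotp\in\calp(D)$. Defining $u\in\da$ componentwise by $u_\gotp:=y_\gotp/x_\gotp$ where $x_\gotp\neq 0$ and $u_\gotp:=1$ where $x_\gotp=0$, one checks that $u\in\da^*$ and $y=ux$, so $y\in\da^*\cdot U$.

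Once this identification is in place, the rest is immediate: since $\da$ is a topological ring and each $u\in\da^*$ has a two-sided continuous inverse in $\da$, the map $x\mapsto ux$ is a homeomorphism of $\da$. Hence $uU$ is open for every $u\in\da^*$, and therefore
$$\rho^{-1}(\rho(U))=\bigcup_{u\in\da^*}uU$$
is a union of open sets, hence open. This gives the desired openness of $\rho$.

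The only mildly subtle point is checking that $y\da=x\da$ forces $y\in\da^*x$ even when some components of $x$ (or $y$) vanish, i.e.\ when $v_\gotp(x)=\infty$; this is the one place where one must argue componentwise via \eqref{e:crt} rather than appealing to the fact that $\dap$ is a domain. Everything else is a formal consequence of the definition of the quotient topology and of $\da$ being a topological ring.
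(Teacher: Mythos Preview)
Your proof is correct and follows essentially the same route as the paper: both identify the $\rho$-saturation of a set as its $\da^*$-orbit via the componentwise valuation argument, then conclude openness from the fact that multiplication by a unit is a homeomorphism. The only difference is cosmetic---you spell out the quotient-topology reduction and the construction of $u$ a bit more explicitly than the paper does.
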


\begin{prf} For any $x\in\da$ one has $x\da=\{z\in\da\mid v_\gotp(z)\ge v_\gotp(x)\;\forall\,\gotp\in\calp(D)\}\,.$ Hence
$$\rho(x)=\rho(y)\Longleftrightarrow x\da=y\da\Longleftrightarrow v_\gotp(x)=v_\gotp(y)\;\forall\,\gotp\in\calp(D)\Longleftrightarrow\exists\,u\in\da^*:x=uy\,.$$
(For the last implication, use the fact that if $v_\gotp(x)=v_\gotp(y)=\infty$ then the components of $x$ and $y$ in $\gotp$ are both $0$.) Thus for any $X\subseteq\da$ we get
\begin{equation} \label{e:bln} \rho^{-1}(\rho(X))=\da^*X:=\{ux\mid u\in\da^*,\,x\in X\}\,. \end{equation}
Multiplication by a unit induces a homeomorphism of $\da$ in itself: hence if $X$ is open then so is $\da^*X=\cup_{u\in\da^*}uX$.
\end{prf}

\begin{rmk} \label{r:daqt} The proof of Lemma \ref{l:ropn} also shows that $\rho$ identifies $\cals(D)$ with $\da/\da^*$. \end{rmk}

Any density $d=(d^+,d^-)$ on $D$ induces a density $\rho_* d=(\rho_* d^+,\rho_* d^-)$ on $\cali(D)$ by
$$\rho_* d^\pm(\calx):=d^\pm(\rho^{-1}(\calx))\,.$$ 
It is immediate  to check that if (Dn1)-(Dn5) hold for $d$ then (Dn$_\cali$1)-(Dn$_\cali$5) are inherited by $\rho_* d$. The same applies to (Dn$_\cali$7), because
$$\gotb\in\gota\cali(D)\Longleftrightarrow\gotb\subseteq\gota\Longleftrightarrow\widehat\gotb\subseteq\widehat\gota\Longleftrightarrow\widehat\gotb=a\da\text{ for some }a\in\widehat\gota$$
(by Lemma \ref{l:clpr}) proves the equality
\begin{equation} \label{e:idmlt} \gota\cali(D)=\rho(\widehat\gota)\cap\cali(D) \end{equation}
and Remark \ref{r:daqt} implies $\rho^{-1}(\rho(\widehat\gota))=\widehat\gota$, yielding $\rho_* d(\gota\cali(D))=d(\widehat\gota)=d(\gota)=\mu(\widehat\gota)$ by (Dn7).

\begin{rmk} \label{r:ztXvY} The set $\gota\cali(D)$ is contained in $\rho(\widehat\gota)$, but not in $\rho(\gota)$ (unless $D$ has class number one). It is important to note that, because of the existence of ideals which are not principal, when working with densities on $\cali(D)$ we should expect $d^\pm(\rho(X))\neq d^\pm(\rho(\xa))$ even when $X$ is closed in $D$ (that is, $X=D\cap\xa$).  \end{rmk}

\subsubsection{Analytic density for a  general $D$} \label{sss:anltid} As implicit in \S\ref{sss:anlt}, the analytic density on $\N$ does not have an obvious generalization to $D$ when there are infinitely many units: this is because the zeta function is an efficient tool to count ideals rather than elements. On the other hand, we are going to see that the construction in \S\ref{sss:anlt} works nicely as a density on $\cali(D)$.

For any $\calx\subseteq\cals(D)$, define 
\begin{equation} \label{e:ztX} \zeta_\calx(s):=\sum_{\gotn\in\calx\cap\cali(D)}\frac{1}{\|\gotn\|^s}\;. \end{equation}
In particular, if $D$ is the ring of integers of a number field then $\zeta_{\cali(D)}$ is the same as the usual Dedekind zeta function. Our hypothesis that $D$ is a global Dedekind 
domain ensures that $\zeta_\calx(s)$ always converges, absolutely and uniformly on compact subsets of the complex half-plane ${\rm Re}(s)>1$, for every $\calx$. 

We shall use the shortenings $\zeta_D$ and $\zeta_\gota$ for $\zeta_{\cali(D)}$ and $\zeta_{\gota\cali(D)}$. The ratio 
\begin{equation} \label{e:dltXs} \delta(\calx,s):=\frac{\zeta_\calx(s)}{\zeta_D(s)} \end{equation}
takes values in $[0,1]$ for every $s$ in the real half-line $(1,\infty$). The {\em analytic density} $\dsan$ is defined by
\begin{equation} \label{e:d-anltid}
\dsanp(\calx):=\limsup_{s\rightarrow1}\delta(\calx,s)\;\;\text{ and }\;\;\dsanm(\calx):=\liminf_{s\rightarrow1}\delta(\calx,s)\,. \end{equation}

\begin{eg} \label{eg:dnl} In the case $D=\Z$, the set of ideals is canonically identified with $\N$. Therefore the analytic density \eqref{e:d-anltid} coincides with  the traditional definition (as found for example in \cite[Chapter III.1.3]{tnm}). We also note that one has $\dsan=\rho_*\dsans$ when $D^*$ is finite (the setting of \S\ref{sss:anlt}) and $D$ has class number 1. However, if $D$ is just a global Dedekind domain, we don't know how to define a density on it having $\dsan$ as its push-forward and we see no reason to expect that such a density should exist. \end{eg}

\begin{prop} \label{p:andid} The analytic density $\dsan$ satisfies conditions {\rm (Dn$_\cali$1)-(Dn$_\cali$5)} and {\rm (Dn$_\cali$7)}. \end{prop}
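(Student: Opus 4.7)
The plan is to recognize $\dsan$ as an instance of the general construction of \S\ref{sss:fga}, adapted in the obvious way from $2^{D^n}$ to $2^{\cali(D)}$: take the directed set $I=(1,\infty)$ ordered by $s\preccurlyeq t\Leftrightarrow s\ge t$, set $\mlib_{\rm an}(\calx,s):=\zeta_\calx(s)$, so that the ratio $\delta(\calx,s)$ of \eqref{e:dltXs} is exactly $\mlib_{\rm an}(\calx,s)/\mlib_{\rm an}(\cali(D),s)$. Then $\dsanp,\dsanm$ become the $\limsup$ and $\liminf$ in the sense of \eqref{e:d-bt}, and once the analogues of (m1) and (m2) are established the argument of Lemma \ref{l:mds} gives (Dn$_\cali$1)-(Dn$_\cali$5).

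Conditions (m1) and (m2) are immediate: $\zeta_\calx(s)\ge 0$, $\zeta_D(s)>0$ for every $s>1$, and absolute convergence of the defining series on $(1,\infty)$ allows me to rearrange and obtain exactly $\zeta_{\calx\sqcup\caly}(s)=\zeta_\calx(s)+\zeta_\caly(s)$. Unpacking the consequences: (Dn$_\cali$1) follows because $\delta(\cali(D),s)\equiv 1$; (Dn$_\cali$2) is the tautological $\liminf\le\limsup$; (Dn$_\cali$3) comes from the termwise monotonicity of $\zeta_\calx$ in $\calx$; (Dn$_\cali$5) combines the exact pointwise identity $\delta(\calx\sqcup\caly,s)=\delta(\calx,s)+\delta(\caly,s)$ with the standard sub/super-additivity of $\limsup$ and $\liminf$. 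For (Dn$_\cali$4), specializing the same additivity to a partition $\calx\sqcup\caly=\cali(D)$ yields $\delta(\calx,s)+\delta(\caly,s)=1$ pointwise in $s$, whence $\liminf(1-f)=1-\limsup f$ gives $\dsanp(\calx)+\dsanm(\caly)=1$.

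For (Dn$_\cali$7) I expect a direct computation to suffice. Every ideal in $\gota\cali(D)$ is uniquely of the form $\gota\gotc$ with $\gotc\in\cali(D)$, and the multiplicativity of $\|\,\|$ yields
$$\zeta_{\gota\cali(D)}(s)=\sum_{\gotc\in\cali(D)}\frac{1}{\|\gota\gotc\|^s}=\frac{1}{\|\gota\|^s}\,\zeta_D(s)\,,$$
so $\delta(\gota\cali(D),s)=\|\gota\|^{-s}\to\|\gota\|^{-1}$ as $s\to 1^+$, which is visibly stronger than what (Dn$_\cali$7) demands.

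I do not anticipate a genuinely hard step: the construction fits a framework already in place, and the two crucial identities (exact additivity of $\delta$ on disjoint unions, and the closed form $\delta(\gota\cali(D),s)=\|\gota\|^{-s}$) hold exactly for every $s>1$, not just asymptotically as in the predensity framework of Lemma \ref{l:mds}. The only minor point worth flagging is that (Dn$_\cali$4) involves a complement and so cannot be read off from the $\limsup$ and $\liminf$ of $\delta(\calx,s)$ alone; it genuinely needs the pointwise relation $\delta(\calx,s)+\delta(\caly,s)=1$ together with the identity $\liminf(1-f)=1-\limsup f$.
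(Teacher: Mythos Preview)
Your proof is correct and essentially matches the paper's approach: the paper dismisses (Dn$_\cali$1)--(Dn$_\cali$5) as trivial and only writes out the computation for (Dn$_\cali$7), which is exactly your closed-form identity $\zeta_{\gota\cali(D)}(s)=\|\gota\|^{-s}\zeta_D(s)$. Your framing via the predensity machinery of \S\ref{sss:fga} makes explicit what the paper leaves implicit, but the content is the same.
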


\begin{prf} We just check (Dn$_\cali$7), since the rest is trivial. In order to prove that the analytic density exists and it satisfies $\dsan(\gota\cali(D))=\mu(\widehat\gota)$ for any 
$\gota\in\cali(D)$, it is enough to show 
$$\zeta_\gota(s)=\frac{1}{\|\gota\|^s}\,\zeta_D(s)\;\text{ for every }s\in(1,\infty).$$
This is immediate from \eqref{e:ztX}, since
$$\gotn\in\gota\cali(D)\Longleftrightarrow \gotn\subseteq\gota \Longleftrightarrow \gotn=\gota\gotn_1 \Longleftrightarrow\|\gotn\|^s=\|\gota\|^s\cdot\|\gotn_1\|^s\,. $$
\end{prf}

\subsubsection{The measure on $\cals(D)$} \label{sss:SDmsr} The Haar measure $\mu$ on $\da$ is pushed forward by $\rho$ to a measure $\rho_*\mu$ on $\cals(D)$, by
\begin{equation} \label{e:romsr} \rho_*\mu(\calx):=\mu(\rho^{-1}(\calx))\,. \end{equation}
Points in $\cals(D)$ have measure zero with respect to $\rho_*\mu$, as follows from Example \ref{eg:da*0} and formula \eqref{e:msrprd} together with \eqref{e:bln}.

\begin{rmk} Note that $\rho_*\mu$ is not a Haar measure on the monoid $\cals(D)$, in the sense that it is not invariant under the operation, as one immediately sees from 
$$\rho_*\mu\big(\sigma\cdot\cals(D)\big)=\rho_*\mu(\rho(\sigma\cdot\da))=\mu(\sigma)=\frac{1}{\|\sigma\|}\neq\rho_*\mu\big(\cals(D)\big)=1\,.$$
In high-brow terms, one could say that this is because $\mu$ (and hence $\rho_*\mu$) comes from the measure on the additive adele group attached to $D$, while a product  invariant measure must be related to the group of ideles. \end{rmk}

\subsection{``Sets of multiples''} Classically the ``set of multiples'' of a (possiby finite) $\N$-valued sequence $(a_i)$ is the set of all the positive integers divisible by some $a_i$: that is, a set of multiples consists of the intersection of $\N-\{0\}$ with a family of ideals of $\Z$. Note that sets of multiples are open in the topology induced on $\N$ by $\za$. Hence the natural analogue in our setting is the following. Let $(\gota_i)_{i\in\N}$ be a sequence of ideals of $D$ and put 
\begin{equation} \label{e:dfU} U_n:=\bigcup_{i\le n}\widehat\gota_i\;\;\,,\;\;U:=\bigcup_{i\in\N}\widehat\gota_i\,. \end{equation}
The set of multiples of $(\gota_i)$ is $U\cap D$. 

\subsubsection{Primitive sequences} In the study of sets of multiples, traditionally one works just with primitive sequences, that is, sequences $(a_i)$ in $\N$ such that $a_i|a_j$ only 
if $i=j$. It is straightforward to check that the sequence $(a_i)$ is primitive if and only if there is no $i$ satisfying $a_i\Z\subseteq\cup_{j\neq i}a_j\Z$. Therefore the requirement that a sequence is primitive eliminates redundancy in constructing the set of multiples.

\begin{lem} \label{l:prmnc} Assume that the class group of $D$ is not trivial. Then there is an ideal $\gota_0$ and a set $\calj\subset\cali(D)$ such that
\begin{equation} \label{e:prmnc} \gota_0\subseteq\bigcup_{\gotb\in\calj}\gotb \end{equation}
and no divisibility relation holds between any two distinct elements of $\calj\cup\{\gota_0\}$. \end{lem}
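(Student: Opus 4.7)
My plan is to exhibit an explicit example. The key observation is that $\gota_0$ needs to be both non-principal and to have at least two distinct prime factors in its factorization: if instead $\gota_0=\gotp$ were a single non-principal prime of order $n$ in $\cls(D)$, then a generator of the principal ideal $\gotp^n$ would lie in $\gota_0$ but could not be placed in any ideal of $\cali(D)$ incomparable with $\gotp$.

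Concretely, I pick a non-principal prime $\gotp$ of $D$ (which exists by the nontriviality of $\cls(D)$) together with a principal prime $\gotq\neq\gotp$. The existence of such a $\gotq$ is guaranteed by the Chebotarev density theorem applied to the Hilbert class field $H_D$, exactly as in the proof of Theorem~\ref{t:chbt}: the principal primes of $D$ are those whose Frobenius in $\Gal(H_D/F)\simeq\cls(D)$ is trivial, and Chebotarev yields infinitely many of them. Setting $\gota_0:=\gotp\gotq$, we have $[\gota_0]=[\gotp][\gotq]=[\gotp]\neq 1$, so $\gota_0$ is non-principal. For the covering family I take
\[
\calj\;:=\;\{\gotp^2,\gotq^2\}\cup\{\gotr\in\calp(D)\mid\gotr\neq\gotp,\gotq\}.
\]

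The antichain property of $\calj\cup\{\gota_0\}$ is then a routine check using unique factorization in $\cali(D)$: distinct primes are pairwise incomparable; $\gotp^2$ and $\gotq^2$ have disjoint supports and are therefore incomparable with each other, with every $\gotr\in\calp(D)-\{\gotp,\gotq\}$, and with $\gota_0=\gotp\gotq$ (comparability with $\gotp^2$, say, would force $\gotq|\gotp$ or $\gotp|\gotq$); and $\gota_0$ itself is incomparable with each such $\gotr$. For the covering, I take any $y\in\gota_0$ and write $(y)=\gota_0\gotc_y$; nonprincipality of $\gota_0$ forces $\gotc_y\neq D$, so $\gotc_y$ admits some prime factor $\gotp_0$. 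If $\gotp_0\in\{\gotp,\gotq\}$, then $\gotp_0^{\,2}$ divides $(y)$ and $y$ belongs to $\gotp^2$ or $\gotq^2$; otherwise $\gotp_0\in\calj$ and $y\in\gotp_0$. The main (in fact, only) obstacle is producing $\gota_0$ with the two features identified above, and the pairing of a non-principal with a principal prime via Chebotarev handles this uniformly, regardless of the structure of $\cls(D)$.
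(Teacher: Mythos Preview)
Your proof is correct and is essentially identical to the paper's own argument: the same choice $\gota_0=\gotp\gotq$ with one principal and one non-principal prime (via Chebotarev), the same family $\calj=\{\gotp^2,\gotq^2\}\cup(\calp(D)-\{\gotp,\gotq\})$, and the same covering argument via a prime factor of the cofactor $\gotc_y$. The only difference is that you and the paper swap which of $\gotp,\gotq$ is principal, which is immaterial.
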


\begin{prf} Choose $\gotp,\gotq\in\calp(D)$ with $\gotp$ principal and $\gotq$ not (this is always possible thanks to Chebotarev). Then one can take $\gota_0=\gotp\gotq$ and 
$$\calj=\{\gotp^2,\gotq^2\}\cup\big(\calp(D)-\{\gotp,\gotq\}\big)\,.$$
Indeed, any two elements in $\calj$ are coprime and none of them divides $\gota_0$, nor can be divided by it. As for \eqref{e:prmnc}, we show that every $x\in\gota_0$ belongs to an ideal in $\calj$. Since $\gota_0$ is not principal, one has $xD=\gotp\gotq\gotc$ for some proper ideal $\gotc$. If $\gotr$ is any prime which divides $\gotc$, then one of $\gotr,\gotp\gotr,\gotq\gotr$ is in $\calj$ and all of them contain $x$. \end{prf}

We say that a sequence of ideals $(\gota_i)$ in $\cali(D)$ is {\em primitive} if there is no index $i$ such that $\gota_i\subseteq\cup_{j\neq i}\gota_j\,.$ Lemma \ref{l:prmnc} proves that 
primitivity cannot be expressed by a divisibility condition on ideals of $D$ when the class group is not trivial.

\begin{rmk} \label{r:prmnc} Since every closed ideal in $\da$ is principal, the inclusion and divisibility conditions coincide for completed ideals:
$$\widehat\gota_i\subseteq\cup_{j\neq i}\widehat\gota_j\Longleftrightarrow\exists\,j\neq i\text{ such that }\widehat\gota_j|\widehat\gota_i\Longleftrightarrow\exists\,j\neq i\text{ such that }\gota_j|\gota_i\,.$$
This also shows that  $\widehat\gota_i\subseteq\cup_{j\neq i}\widehat\gota_j$ implies  $\gota_i\subseteq\cup_{j\neq i}\gota_j$, while there is no reason for the converse to be true if the sequence is infinite: just take $\gota_0$ and $\calj$ as in Lemma \ref{l:prmnc} to get  an example where $\widehat\gota_0\nsubseteq\cup\widehat\gotb$ in spite of \eqref{e:prmnc}. \end{rmk}

\subsubsection{An application} The fact that ``sets of multiples'' are  open allows to give an easy answer to a question asked in \cite{lt2}. Recall the function 
$\omega\colon\cals(D)\rightarrow\bar\N$ defined in \eqref{e:ommin}.

\begin{prop} \label{p:msromg} Fix $k\in\N$ and put $Z_k:=\{x\in D\mid\omega(xD)=k\}$. Then $\mu(\widehat Z_k)=0$. \end{prop}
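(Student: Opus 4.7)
The plan is to enclose $\hat Z_k$ in a larger set which decomposes cleanly as a product along the primes of $D$, and then exploit the divergence of $\zeta_D$ at $s=1$.

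First I would establish the inclusion
\begin{equation*}
\hat Z_k \;\subseteq\; \Omega_k \;:=\; \{y\in\da\mid \omega(y\da)\le k\}.
\end{equation*}
For any $k+1$ pairwise distinct primes $\gotp_1,\dots,\gotp_{k+1}\in\calp(D)$, the product $\gotp_1\cdots\gotp_{k+1}$ is a non-zero ideal of $D$ and hence of finite index by (A2), so by Lemma~\ref{l:msr0} the ideal $\gotp_1\cdots\gotp_{k+1}\da$ is open in $\da$. Its intersection with $D$ is $\gotp_1\cdots\gotp_{k+1}$, which is disjoint from $Z_k$: an element lying in it would be divisible by each of the $k+1$ distinct primes $\gotp_i$, forcing $\omega(xD)\ge k+1$. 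Hence $Z_k$ lies in the (closed) complement of $\gotp_1\cdots\gotp_{k+1}\da$, and so does $\hat Z_k$. Intersecting over all $(k+1)$-tuples yields the claim.

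Next I would decompose $\Omega_k$ along primes using the identification $\da\simeq\prod_\gotp\dap$ of Theorem~\ref{t:crt}. For each finite $S\subseteq\calp(D)$ with $|S|\le k$, set
\begin{equation*}
W_S \;:=\; \prod_{\gotp\in S}\gotp\dap \;\times\; \prod_{\gotp\notin S}\dap^*.
\end{equation*}
Every $y\in\Omega_k$ has a unique support $S(y):=\supp(y\da)$ of size at most $k$ and satisfies $y\in W_{S(y)}$, so $\Omega_k=\bigsqcup_{|S|\le k}W_S$ is a countable disjoint union (since $\calp(D)$ is countable by Assumption~\ref{a:cntb}). Each $W_S$ is a product of closed sets in the factors $\dap$, hence closed, and Corollary~\ref{c:egprd} gives
\begin{equation*}
\mu(W_S)=\prod_{\gotp\in S}\frac{1}{\|\gotp\|}\cdot\prod_{\gotp\notin S}\left(1-\frac{1}{\|\gotp\|}\right).
\end{equation*}

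The main (and really the only) point is to show that the second factor vanishes: this is where the global hypothesis enters. For any global Dedekind domain $\zeta_D$ diverges at $s=1$, as recalled in Example~\ref{eg:da*0}; since $\zeta_D$ differs from the Dedekind zeta function of the fraction field by the finitely many Euler factors attached to the places in $S_D$, this divergence is standard in both the number-field and function-field cases. Therefore $\prod_{\gotp\in\calp(D)}(1-\|\gotp\|^{-1})=1/\zeta_D(1)=0$, and removing the finitely many factors indexed by $S$ does not alter this value. Hence $\mu(W_S)=0$ for every admissible $S$, and countable subadditivity gives $\mu(\Omega_k)=0$, so $\mu(\hat Z_k)=0$.
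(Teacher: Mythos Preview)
Your proof is correct and runs parallel to the paper's, with only a cosmetic difference in the decomposition step. Both arguments enclose $\hat Z_k$ in the set $\Omega_k=\{y\in\da:\omega(y\da)\le k\}$ and show $\mu(\Omega_k)=0$, and both ultimately rest on the same fact: $\mu(\da^*)=\prod_\gotp(1-\|\gotp\|^{-1})=1/\zeta_D(1)=0$ for global $D$. The paper routes this through the quotient $\rho:\da\to\cals(D)$, identifies $\Omega_k=\rho^{-1}(\calk)$ with $\calk=\{\sigma\in\cals(D):\omega(\sigma)\le k\}$ countable, and hence writes $\Omega_k$ as a countable union of single $\da^*$-orbits $a\da^*$, each of measure $\|a\|^{-1}\mu(\da^*)=0$. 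You instead partition $\Omega_k$ more coarsely by the support $S=\supp(y\da)$, obtaining the product sets $W_S$ and computing $\mu(W_S)$ directly via Corollary~\ref{c:egprd}. Your route is a touch more elementary in that it never needs $\cals(D)$ or $\rho$; the paper's version integrates better with the supernatural-ideal framework it has already set up. Either way the substance is the same.
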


\begin{prf} Consider $\calt:=\{\gota\in\cali(D)\mid\omega(\gota)>k\}\,.$ Then $U:=\cup_{\gota\in\calt}\widehat\gota$ is open and hence so is its image $\rho(U)$ in $\cals(D)$. Thus 
$$\calk:=\cals(D)-\rho(U)$$
is a closed set. Note that $\sigma\in\rho(U)$ implies $\omega(\sigma)>k$, by the trivial inequality
$$\omega(\widehat\gota)\le\omega(\rho(a))\;\,\forall\,a\in\widehat\gota\,.$$
On the other hand assume $\omega(\sigma)>k$ with $\sigma=x\da$. Then $x\in\widehat\gotp_i$ for distinct primes $\gotp_1$, ..., $\gotp_{k+1}$. Since the ideal $\gota=\prod_{i=1}^{k+1}\gotp_i$ is in $\calt$, we obtain 
$$\sigma=\rho(x)\in\rho(\widehat\gota)\subseteq\rho(U)\,.$$ This proves the equality  
\begin{equation} \label{e:kappa} \calk=\{\sigma\in\cals(D)\mid\omega(\sigma)\le k\}\,. \end{equation}
From \eqref{e:kappa} it is easy to see that $\calk$ is countable. Thus $\rho^{-1}(\calk)$ is a countable union of subsets which, by \eqref{e:bln}, all have the form $a\da^*$. Therefore, by 
\eqref{e:msrprd} and Example \ref{eg:da*0}, we get $\mu(\rho^{-1}(\calk))=0.$ The proof is concluded by the simple observation that $\widehat Z_k$ is a subset of $\rho^{-1}(\calk)$, since 
$\rho(Z_k)\subseteq\calk$ by \eqref{e:kappa} and $\rho^{-1}(\calk)$ is closed. \end{prf}

\begin{cor} \label{c:lt2} The set $\{x\in D\mid\omega(xD)=k\}$ has density $0$ for every $k\in\N$ and every definition of density satisfying {\rm (Dn1)-(Dn7)} or, more generally, the 
Leonetti-Tringali axioms {\rm (F1), (F3)} and {\rm (F6)}. \end{cor}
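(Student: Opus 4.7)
The proof is essentially immediate from the machinery already assembled. My plan is to observe that Corollary \ref{c:lt2} is just the statement that $Z_k := \{x \in D \mid \omega(xD) = k\}$ has density $0$, and this follows formally from Proposition \ref{p:msromg} combined with the ``measure-zero forces density-zero'' principle.

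First, Proposition \ref{p:msromg} gives $\mu(\hat Z_k) = 0$. Now for any density $d = (d^+, d^-)$ satisfying (Dn1)-(Dn7), I would invoke Corollary \ref{c:m=0}, which asserts precisely that $\mu(\hat X) = 0$ implies $d(X) = 0$ (so in particular both $d^+(X) = 0$ and $d^-(X) = 0$ by (Dn2)). Applied to $X = Z_k$, this gives the result.

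For the more general case of Leonetti-Tringali quasi-densities (i.e. pairs $(d^+,d^-)$ satisfying only (F1), (F3), (F6)), the same conclusion holds by the stronger form of Corollary \ref{c:m=0} noted in Remark \ref{r:m>d}.2, which points to \cite[Theorem 2.4]{lt2}. There is nothing new to prove here beyond citing that reference.

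There is no real obstacle in this step: all the work has been done in proving Proposition \ref{p:msromg}, which establishes the measure-theoretic content, and the density-to-measure comparison has been set up in Section \ref{s:m&d}. The corollary is just the observation that these two ingredients combine. I would present it as a three-line proof.
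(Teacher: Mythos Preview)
Your proposal is correct and follows essentially the same approach as the paper: combine Proposition \ref{p:msromg} (which gives $\mu(\hat Z_k)=0$) with the measure-zero-implies-density-zero principle, citing Remark \ref{r:m>d}.2 for the extension to Leonetti--Tringali quasi-densities. The only cosmetic difference is that the paper cites Corollary \ref{c:m>d} directly (the inequality $d^+(X)\le\mu(\xa)$) rather than its immediate consequence Corollary \ref{c:m=0}, but this is the same argument.
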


\begin{prf} Just apply Proposition \ref{p:msromg} together with Corollary \ref{c:m>d} and Remark \ref{r:m>d}.2. \end{prf}

In the case $D=\Z$, this strengthens \cite[Theorem 3]{bf} (which proves that each $Z_k$ has uniform density $0$) and provides an affirmative answer to \cite[Question 5.2]{lt2}. All the main ideas for this special case were discussed in a S.U.R.F. project at XJTLU in summer 2016 and already appeared in \cite{saett}. Our result also shows that composite integers have density $1$ in any global Dedekind domain, since, by definition, $x$ composite means $\omega(x)>1$. This improves on \cite[Example 4.6]{lonied}, where it was observed that the upper Buck density of the set of all composite algebraic integers in a number field is $1$.

\begin{rmk} Corollary \ref{c:lt2} can be interpreted as stating that the probability that an integer $n$ is divisible by $k$ different primes is $0$. Taking the sum over all $k\in\N$, one obtains the apparently paradoxical result
\begin{equation} \label{e:prbbdvf} {\rm Prob}\big(\omega(n)<\infty\big)\stackrel{?}{=}\sum_{k\in\N}{\rm Prob}\big(\omega(n)=k)=0\,. \end{equation}
The definition of probability implicit on the right-hand side of \eqref{e:prbbdvf} is
$${\rm Prob}\big(x\in X\big):=\mu(\xa)$$
and this makes clear where the mistake lies: the dubious equality in \eqref{e:prbbdvf} is false because the closure of a union is not the union of the closures. The explanation of the paradox is completed observing that the set $\{z\in\za\mid\omega(z\za)=\infty\}$ has measure $1$, but its complement is dense (it contains the non-negative integers). \end{rmk}

\subsubsection{The closure of $U$} For $\gota\in\cali(D)$, we will write $\supp(\gota)$ to denote its support (that is, the set of primes dividing $\gota$). 

\begin{lem} \label{l:Udns} Take $U=\cup_i\widehat\gota_i$ as in \eqref{e:dfU}. Then $U$ is dense in $\da$ if and only if for every finite $S\subset\calp(D)$ there is an index $i$ such that 
$\supp(\gota_i)\cap S=\emptyset$. \end{lem}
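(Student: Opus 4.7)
The plan is to use the product decomposition $\da \simeq \prod_{\gotp} \dap$ from Theorem \ref{t:crt}, together with the explicit description $\hat\gota_i = \prod_\gotp \gotp^{v_\gotp(\gota_i)}\dap$ that follows from it. Under this identification, a basic open set of $\da$ has the form $V = \prod_\gotp V_\gotp$ with each $V_\gotp$ open in $\dap$ and $V_\gotp = \dap$ for all but finitely many $\gotp$. Both directions will reduce to checking what it means for $\hat\gota_i$ to intersect such a $V$ at the finite set of ``active'' primes.

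For the ``if'' direction, I would assume the combinatorial condition and show that $U$ meets every non-empty open set. Given a non-empty open $V \subseteq \da$, I shrink it to a basic open set $\prod_\gotp V_\gotp$ and let $S$ be the (finite) set of primes where $V_\gotp \neq \dap$. By hypothesis there is an index $i$ with $\supp(\gota_i) \cap S = \emptyset$, so $\gotp^{v_\gotp(\gota_i)} \dap = \dap$ for every $\gotp \in S$. Then I build an element of $V \cap \hat\gota_i$ componentwise: at primes in $S$ pick any $x_\gotp \in V_\gotp$, and at primes outside $S$ pick any $x_\gotp \in \gotp^{v_\gotp(\gota_i)}\dap$ (the zero element works). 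This element lies in $\hat\gota_i \subseteq U$, proving density.

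For the ``only if'' direction, I would argue contrapositively by exhibiting a non-empty open set disjoint from $U$ when the condition fails. Given a finite $S \subset \calp(D)$ such that every $\gota_i$ has some prime factor in $S$, consider the open set $V := \prod_{\gotp \in S} \dap^{*} \times \prod_{\gotp \notin S} \dap$, which is non-empty because each $\dap^*$ is a non-empty open subset of $\dap$. For any $x = (x_\gotp)_\gotp \in V \cap \hat\gota_i$, we would have $v_\gotp(x_\gotp) \ge v_\gotp(\gota_i)$ for every $\gotp$, but also $v_\gotp(x_\gotp) = 0$ for $\gotp \in S$; this forces $v_\gotp(\gota_i) = 0$ for every $\gotp \in S$, contradicting $\supp(\gota_i) \cap S \neq \emptyset$. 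Hence $V \cap U = \emptyset$, so $U$ is not dense.

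There is no real obstacle here: the whole argument is bookkeeping with the product topology on $\da$. The only point that deserves some care is making sure the chosen basic neighbourhood is well-behaved, i.e.\ that one really can restrict to basic open sets of the form $\prod_\gotp V_\gotp$ with $V_\gotp = \dap$ for almost all $\gotp$; this is immediate from the product topology but is the input that converts a topological density statement into the finite combinatorial condition on $\supp(\gota_i)$.
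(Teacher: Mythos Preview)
Your proof is correct and follows essentially the same approach as the paper's, just phrased in terms of the product decomposition $\da\simeq\prod_\gotp\dap$ rather than the coset base \eqref{e:base}: your witnessing open set $\prod_{\gotp\in S}\dap^*\times\prod_{\gotp\notin S}\dap$ is exactly the complement of $\cup_{\gotp\in S}\hat\gotp$, which is what the paper uses, and your finite set $S$ of ``active'' primes plays the same role as the paper's $\supp(\gotb)$. The two arguments are equivalent presentations of the same idea.
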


\begin{prf} We are going to prove that $U$ is not dense if and only if there is a finite $S_0$ such that for every $i$ one has $\supp(\gota_i)\cap S_0\neq\emptyset$. \begin{itemize}
\item If $U$ is not dense, then there are $x\in\da$ and $\gotb\in\cali(D)$ such that
$$(x+\widehat\gotb)\cap\widehat\gota_i=\emptyset\;\forall\,i\in\N\,.$$
This implies $\widehat\gotb+\widehat\gota_i\neq\da$, which is equivalent to $\supp(\gotb)\cap\supp(\gota_i)\neq\emptyset$. Take $S_0=\supp(\gotb)$.
\item Assume $S_0$ exists. Then 
$$U\subseteq\bigcup_{\gotp\in S_0}U(\gotp)\,,\text{ where }U(\gotp):=\bigcup_{\gota_i\subseteq\gotp}\widehat\gota_i\,.$$
Clearly $U(\gotp)\subseteq\widehat\gotp$ and so the closure of $\cup_{\gotp\in S_0}U(\gotp)$ is contained in $\cup_{\gotp\in S_0}\widehat\gotp$ (since $S_0$ is finite). Hence $U$ cannot be dense. \end{itemize} \end{prf}

\begin{cor} \label{c:kpfree} Let $\calq$ be any infinite subset of $\calp(D)$ and $\gotp\mapsto k_\gotp$ any function $\calq\rightarrow\N$. The set
\begin{equation} \label{e:kpfree} \{x\in\da\mid v_\gotp(x)<k_\gotp\text{ for every prime }\gotp\in\calq\}\end{equation}
has empty interior. \end{cor}

\begin{prf} It is enough to observe that the set in \eqref{e:kpfree} is the complement of $U=\cup_{\gotp\in\calq}\gotp^{k_\gotp}$ and apply Lemma \ref{l:Udns}. (Note that $U=D$ if $k_\gotp=0$ for some $\gotp$.) \end{prf}

In particular, this applies when there is a fixed $k\ge2$ such that $k_\gotp=k$ for every $\gotp$; in this case, elements in the set in \eqref{e:kpfree} are usually called $k$-free.

\subsubsection{The complements of $U$} \label{sss:cmplU} We consider a sequence $(\gota_i)_{i\in\N}$ and $U_n,U$ as in \eqref{e:dfU}. Let 
$$X_n:=D-U_n\,,\,X:=D-U\,\text{ and }\,Y:=\da-U\,.$$
Note that the definition yields
\begin{equation} \label{e:vltY} Y=\big\{y\in\da\mid\forall\,i\;\exists\,\gotp\text{ such that }v_\gotp(y)<v_\gotp(\gota_i)\big\}. \end{equation}
Recalling that completions of ideals are both closed and open, one immediately sees that so are all $U_n$, and hence $\xa_n=\da-U_n$; moreover $U$ is open and $Y=\cap\xa_n$. It is also clear that $Y$ contains $\xa$ and it is natural to ask what is their difference.
 
\begin{rmk} \label{r:XvY} The inclusion $\xa\subseteq Y$ can be strict. For a simple example, take $D=\Z$ and let $\{\gota_i\}$ be  the set of all prime ideals: then $X=\{\pm1\}=\xa\neq 
Y=\za^*$.

Moreover, one can have $\rho(\xa)\subsetneq\rho(Y)$ - and hence $\zeta_{\rho(\xa)}(s)<\zeta_{\rho(Y)}(s)$ for every $s>1$. The family of ideals $\{\gota_0\}\cup\calj$ used in Lemma \ref{l:prmnc} provides an instance of this phenomenon: putting 
$$U=\bigcup_\calj\widehat\gotb\,,\;U_0=U\cup\widehat\gota_0\,,\;Y=\da-U\,,\;X=D-U\text{ and }Y_0=\da-U_0\,,$$
then Remark \ref{r:prmnc} shows $\gota_0\in\rho(U_0)-\rho(U)$, while inclusion \eqref{e:prmnc} yields $D\cap U=D\cap U_0$ and hence $X=Y\cap D=Y_0\cap D$. Thus $\xa$ is a subset of $Y_0$ 
and $\gota_0\in\rho(Y)-\rho(\xa)$.  \end{rmk}

\begin{lem} \label{l:YdaX} Assume $D$ has class number $1$. Then $Y=\xa\da^*$. \end{lem}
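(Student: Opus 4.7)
The plan is to prove the two inclusions of $Y=\xa\da^*$ separately. The easy one, $\xa\da^*\subseteq Y$: since $U$ is open, $Y=\da-U$ is closed; and since each $\hat\gota_i$ is an ideal of $\da$, one has $uy\in\hat\gota_i\Leftrightarrow y\in\hat\gota_i$ for every $u\in\da^*$, so $Y$ is $\da^*$-invariant. From $X\subseteq Y$ one then gets $\xa\subseteq Y$ by closedness, and finally $\xa\da^*\subseteq Y\da^*=Y$ by invariance.

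For the other inclusion, I would push the whole question into $\cals(D)$ via the map $\rho$. By Remark \ref{r:daqt} the map $\rho$ identifies $\cals(D)$ with $\da/\da^*$, so formula \eqref{e:bln} applied to $\xa$ gives $\rho^{-1}(\rho(\xa))=\xa\da^*$, while the $\da^*$-invariance of $Y$ proved above gives $\rho^{-1}(\rho(Y))=Y$. Thus $Y\subseteq\xa\da^*$ is equivalent to $\rho(Y)\subseteq\rho(\xa)$. Given $\sigma\in\rho(Y)$, the plan is to build a sequence $(x_n)$ in $X$ with $\rho(x_n)\to\sigma$ in $\cals(D)$; compactness of $\da$ then yields a subsequential limit $w\in\xa$, and continuity of $\rho$ forces $\rho(w)=\sigma\in\rho(\xa)$.

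The construction of the $x_n$ is where the class number $1$ hypothesis enters. Write $\sigma=\prod_\gotp\gotp^{e_\gotp}$ as in Remark \ref{r:sprnt}, pick an enumeration $\gotp_1,\gotp_2,\ldots$ of $\calp(D)$, and use that $D$ is a PID to choose uniformizers $\pi_j\in D$ with $\gotp_j=\pi_j D$. Setting
\begin{equation*}
x_n:=\prod_{j=1}^n\pi_j^{\min(e_{\gotp_j},\,n)}\in D,
\end{equation*}
the convergence $\rho(x_n)\to\sigma$ is immediate from the product description of the topology on $\cals(D)\simeq\bar\N^{\calp(D)}$ given in Remark \ref{r:alxcmp}: at each fixed prime $\gotp_J$ the valuation $v_{\gotp_J}(x_n)$ eventually equals $e_{\gotp_J}$ if $e_{\gotp_J}<\infty$, and equals $n$ (tending to $\infty$) otherwise. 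To check $x_n\in X$, the hypothesis $\sigma\in\rho(Y)$ translates into the assertion that for every index $i$ there exists a prime $\gotq_i$ with $v_{\gotq_i}(\gota_i)>e_{\gotq_i}$ (forcing $e_{\gotq_i}<\infty$); a short case split on whether or not $\gotq_i\in\{\gotp_1,\ldots,\gotp_n\}$ shows that $v_{\gotq_i}(x_n)<v_{\gotq_i}(\gota_i)$ in either case, whence $x_n\notin\hat\gota_i$ for all $i$.

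The hard part is precisely the construction of $x_n\in D$: one must realize a prescribed finite-support valuation pattern, with trivial valuation at all other primes, by a single element. This is where class number $1$ is essential, since in a general Dedekind domain the ideal $\prod_j\gotp_j^{\min(e_{\gotp_j},n)}$ need not be principal, so no such $x_n$ will exist. Remark \ref{r:XvY} already illustrates that without the class-number-$1$ hypothesis one can have $\rho(\xa)\subsetneq\rho(Y)$, so this obstruction is genuine; once the approximating sequence is available, the rest of the proof is purely formal.
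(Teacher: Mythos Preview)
Your proof is correct and follows essentially the same route as the paper's: reduce to $\rho(Y)=\rho(\xa)$ via \eqref{e:bln}, then for $y\in Y$ truncate the valuation pattern of $\rho(y)$ to the first $n$ primes (capping infinite exponents at $n$), use class number $1$ to realize each truncation as an element $x_n\in D$, verify $x_n\in X$ via \eqref{e:vltY}, and conclude by convergence in $\cals(D)$. The only cosmetic differences are that you multiply explicit uniformizers where the paper just picks a generator of the ideal $\gotc_n=\prod_{j\le n}\gotp_j^{\min(e_{\gotp_j},n)}$, and you pass through a subsequential limit in $\da$ where the paper uses closedness of $\rho(\xa)$ directly.
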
 

\begin{prf} By \eqref{e:bln}, it is enough to prove $\rho(Y)=\rho(\xa)$.

Choose an increasing sequence $(S_n)_{n\in\N}$ of finite subsets of $\calp(D)$, so that $\cup_nS_n=\calp(D)$. Fix $y\in Y$ and, for every $n\in\N$ and $\gotp\in\calp(D)$, define
\begin{equation} \label{e:expny} e_n(\gotp):=\begin{cases} \begin{array}{ll} 0 & \text{ if }\gotp\notin S_n \\
v_\gotp(y) & \text{ if }\gotp\in S_n \text{ and }v_\gotp(y)\neq\infty\\
n & \text{ if }\gotp\in S_n \text{ and }v_\gotp(y)=\infty \end{array}\end{cases} \end{equation}
and consider the ideals
\begin{equation} \label{e:cny} \gotc_n:=\prod_\gotp\gotp^{e_n(\gotp)}\in\cali(D). \end{equation}
The hypothesis on the class number implies that for every $n$ the ideal $\gotc_n$ is principal: let $x_n\in D$ be a generator. One has $v_\gotp(x_n)=e_n(\gotp)\le v_\gotp(y)$ for every $n$ and $\gotp$ by construction and \eqref{e:vltY} then shows that each $x_n$ is in $X=Y\cap D$. Moreover $v_\gotp(y)=\lim e_n(\gotp)$ holds for every $\gotp$ and by Remark \ref{r:alxcmp} this is equivalent to $\lim\rho(x_n)=\rho(y)$. This proves $\rho(y)\in\rho(\xa)$ and hence $\rho(Y)=\rho(\xa)$. \end{prf}

The examples in Remark \ref{r:XvY} prove that the class number $1$ hypothesis in Lemma \ref{l:YdaX} is necessary and suggest that one cannot hope for much more than this without stronger conditions either on $D$ (see points 1 and 3 of Remark \ref{r:deN} for the case $D=\Z$) or on the sequence $(\gota_i)$. Theorem \ref{t:chbt} allows to prove the equality $\xa=Y$ in many cases. We give an example.

\begin{prop} \label{p:appirr} Assume that the ideals $\gota_i$ are pairwise coprime and satisfy $\Omega(\gota_i)>1$ for almost every $i$. Then $\xa=Y$. \end{prop}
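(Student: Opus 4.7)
The inclusion $\xa \subseteq Y$ is immediate, since $Y$ is closed and contains $X$. For the reverse, I fix $y \in Y$ and a basic neighbourhood $y + \hat\gotb$, and seek $x \in X$ with $x \equiv y \pmod{\gotb}$. One may assume every $\gota_i$ is proper (otherwise $Y = \emptyset$). Let $P_0$ be the finite set of those $\gota_i$ that are prime ideals: for each $\gotp \in P_0$, the definition of $Y$ applied to the ideal $\gotp$ produces a witness prime $\gotq$ with $v_\gotq(y) < v_\gotq(\gotp) = 1$, which forces $\gotq = \gotp$ and $v_\gotp(y) = 0$. Replacing $\gotb$ by $\gotc := \gotb \prod_{\gotp \in P_0 \setminus \supp(\gotb)}\gotp$, it suffices to find $x \equiv y \pmod{\gotc}$; set $T := \supp(\gotc)$ and $f_\gotp := \min(v_\gotp(y), v_\gotp(\gotc))$ for $\gotp \in T$.

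My plan is to build $x = r\,p$, where $r \in D$ carries the forced valuations at the primes of $T$ and $p \in \Irr(D)$ is a prime irreducible supplying the necessary residue corrections. Consider the ideal $\gota' := \prod_{\gotp \in T}\gotp^{f_\gotp}$. Using the finiteness of $\cls(D)$ together with the Chebotarev density theorem (in the form used in the proof of Theorem \ref{t:chbt}), I pick an auxiliary prime $\gotr \notin T$ lying in the inverse ideal class $[\gota']^{-1}$, subject to any prescribed finite list of avoidance conditions. Let $r \in D$ generate the principal ideal $\gota'\gotr$; then $v_\gotp(r) = f_\gotp$ for $\gotp \in T$, $v_\gotr(r) = 1$, and $v_\gotp(r) = 0$ elsewhere. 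After dividing locally by the unit $r/t_\gotp^{f_\gotp}$ at each $\gotp \in T$ with $f_\gotp < v_\gotp(\gotc)$, the relation $rp \equiv y \pmod{\gotc}$ reduces to a single congruence $p \equiv c_0 \pmod{\gotd}$ with $\supp(\gotd) \subseteq T$ and $c_0 \in (D/\gotd)^*$. Enlarging $\gotd$ to include $\gotr$ (together with any further primes I wish $p$ to avoid), Lemma \ref{l:chbt} furnishes $p \in \Irr(D)$ with $pD = \gotq$ a prime outside $\supp(\gotd)$. The resulting $x = rp$ lies in $D$, satisfies $x \equiv y \pmod{\gotc}$, and has $xD = \gota' \cdot \gotr \cdot \gotq$ with pairwise coprime factors.

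It remains to verify $x \notin \gota_i$ for every $i$. For $\gota_i \in P_0$ one has $v_{\gota_i}(x) = f_{\gota_i} = 0$. Otherwise $\Omega(\gota_i) > 1$, and $\gota_i \mid xD$ forces $\supp(\gota_i) \subseteq T \cup \{\gotr,\gotq\}$. When $\supp(\gota_i) \subseteq T$, the witness prime $\gotp^* \in \supp(\gota_i)$ with $v_{\gotp^*}(y) < v_{\gotp^*}(\gota_i)$ (supplied by $y \in Y$) yields $v_{\gotp^*}(xD) = f_{\gotp^*} \le v_{\gotp^*}(y) < v_{\gotp^*}(\gota_i)$, whence $\gota_i \nmid xD$. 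The only remaining possibility is that $\gotr$ or $\gotq$ lies in $\supp(\gota_i)$; pairwise coprimality of the ideals $\gota_i$ allows at most one index containing $\gotr$ and at most one containing $\gotq$, and both such indices are eliminated by including suitable primes in the avoidance lists used to pick $\gotr$ and $\gotq$.

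The main difficulty is the class-group obstruction in the construction of $r$: because $\gota'$ is in general non-principal, the auxiliary prime $\gotr$ is unavoidable and could in principle be exploited by some $\gota_i$. The proof escapes this via Chebotarev, which provides infinitely many primes in any prescribed ideal class and residue class, leaving ample room to enforce the finite list of avoidance conditions.
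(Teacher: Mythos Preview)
Your argument is correct and takes a genuinely different route from the paper's. The paper proceeds in two conceptual steps: first it shows $Y=\da^*\xa$ by building a sequence $\gotc_n\gotq_n$ of principal ideals converging to $\rho(y)$ in $\cals(D)$, extracting a convergent subsequence of generators $x_n\in X$ via compactness of $\da$, and deducing that any $y\in Y$ differs from some $x\in\xa$ by a unit $u\in\da^*$; second, it approximates $u$ by prime irreducibles $r_n$ (Theorem~\ref{t:chbt}) and checks $x_nr_n\in X$ for $n\gg0$. Your proof collapses these two steps into a single direct construction inside a fixed basic neighbourhood $y+\hat\gotb$: the auxiliary prime $\gotr$ plays the role of the paper's $\gotq_n$ (killing the class-group obstruction), and the single irreducible $p$ plays the role of the paper's $r_n$ (supplying the residue correction via Lemma~\ref{l:chbt}). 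You thereby avoid sequences, compactness, and the intermediate identification $Y=\da^*\xa$.

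The trade-off is bookkeeping. Your last paragraph is a bit compressed: to make the avoidance argument watertight you must, \emph{before} choosing $\gotr$, also exclude the finite set $T'=\bigcup_{i\in J}\supp(\gota_i)$, where $J$ indexes those $\gota_i$ whose support meets $T$ (pairwise coprimality makes $J$, hence $T'$, finite). This guarantees that the unique index $j(\gotr)$ with $\gotr\in\supp(\gota_{j(\gotr)})$, if it exists, has $\supp(\gota_{j(\gotr)})\cap T=\emptyset$; then excluding $\supp(\gota_{j(\gotr)})\cup T'\cup T\cup\{\gotr\}$ when choosing $\gotq$ handles both $j(\gotr)$ and the analogous $j(\gotq)$. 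With this made explicit, all cases in your final verification go through. The paper's two-step organisation sidesteps this case analysis at the cost of the compactness/limit machinery; your approach is more elementary but demands that the finite avoidance lists be specified up front rather than retroactively.
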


The proof is in two steps: first we show $Y=\da^*\xa$ using the hypotheses on $(\gota_i)$ to approximate any element in $\rho(Y)$ by a sequence of principal ideals with generators in $X$; then we prove $\da^*\xa=\xa$  by means of Theorem \ref{t:chbt}.

\begin{prf} Define $S_n:=\cup_{i\le n}\supp(\gota_i)$. For simplicity of exposition we start making the additional assumption $\cup_nS_n=\calp(D)$.

Fix $y\in Y$ and define $e_n(\gotp)$ and $\gotc_n$ as in \eqref{e:expny} and \eqref{e:cny}. The reasoning in the proof of Lemma \ref{l:YdaX} shows that the sequence $\gotc_n$ converges to $\rho(y)$ in $\cals(D)$. For every $n$, choose a  prime ideal $\gotq_n$ such that $\gotc_n\gotq_n$ is principal and $\gotq_n\notin S_n\cup\{\gotq_0,...,\gotq_{n-1}\}$. From \eqref{e:vltY} we get
$$\rho(Y)=\big\{\sigma\in\cals(D)\mid\forall\,i\;\exists\,\gotp\text{ such that }v_\gotp(\sigma)<v_\gotp(\gota_i)\big\}.$$
By construction we have 
$$v_\gotp(\gotc_n\gotq_n)=\begin{cases} \begin{array}{ll} v_\gotp(\gotc_n)\le v_\gotp(y) & \text{ if }\gotp\in S_n\,; \\
v_\gotp(\gotq_n)\le1 & \text{ if }\gotp\notin S_n\,. \end{array}\end{cases}$$
The first inequality shows $\gotc_n\gotq_n\notin\rho(\widehat\gota_i)$ for $i\le n$. The second inequality implies $\gotc_n\gotq_n\notin\rho(\widehat\gota_i)$ for $i>n\gg0$, because $\gotc_n$ is not a multiple of $\gota_i$ and $\Omega(\gota_i)>1=\Omega(\gotq_n)$. Therefore $\gotc_n\gotq_n\in\rho(Y)$ for $n\gg0$. Since the ideals $\gotq_n$ are pairwise distinct, we get 
$$\lim_{n\rightarrow\infty}v_\gotp(\gotc_n\gotq_n)=\lim_{n\rightarrow\infty}v_\gotp(\gotc_n)=v_\gotp(y)\;\;\forall\,\gotp\in\calp(D)$$ 
and hence $\rho(y)=\lim_n\gotc_n\gotq_n$\,. 

Let $x_n\in D$ be a generator of $\gotc_n\gotq_n$, so that $\rho(x_n)=\gotc_n\gotq_n$\,. Then $x_n\in X$ for $n\gg0$ and the compactness of $\da$ ensures that $(x_n)$ has a limit $x$ (restricting, if needed, to a subsequence). Continuity yields $\rho(x)=\rho(y)$ and hence there is some $u\in\da^*$ such that $xu=y$.

For every $n$, let $i(n)$ be the smallest index such that $\gotq_n$ divides $\gota_{i(n)}$ and put 
$$T_n=S_n\cup\supp(\gota_{i(n)})\,.$$ 
By Theorem \ref{t:chbt}, one can find a sequence $(r_n)$ in $\Irr(D)$ such that 
$$r_n-u\in\prod_{\gotp\in T_n}\gotp^n\,.$$
Then $(r_n)$ converges to $u$ and $r_n\in\Irr(D)-T_n$. The latter condition ensures $x_nr_n\in X$ for $n\gg0$, as one can see by the same reasoning used for checking $\gotc_n\gotq_n\in\rho(Y)$. Therefore $y=\lim x_nr_n\in\xa$.

Finally, we observe that one can dispense with the assumption $\cup_nS_n=\calp(D)$ without any loss of generality. Indeed, having prime ideals outside $\cup_nS_n$ just provides more elbow 
room in the choice of $\gotq_n$ and $r_n$. \end{prf}

\subsection{The Davenport-Erd\H os theorem} The notation is the same as in \S\ref{sss:cmplU}: we fix a sequence of ideals $(\gota_i)$ and by taking their complements we obtain sets $X_n,\xa_n$, $X$, $Y$. The goal is to compare analytic density and measure, as defined in \eqref{e:d-anltid} and \eqref{e:romsr}, for $\widehat\calx_n:=\rho(\xa_n)$, $\widehat\calx:=\rho(\xa)$ and $\caly:=\rho(Y)$. We also fix an asymptotic density $\dsas$ on $D$ (as in \S\ref{sss:asnt}). \\

The sets $\xa_n$ and $Y$ are invariant under the action of $\da^*$ and therefore their measures in $\da$ coincide with those of $\widehat\calx_n$ and $\caly$ in $\cals(D)$. The inclusion-exclusion principle yields
\begin{equation} \label{e:inclescl} \mu(\xa_n)=\sum_{k\le n}(-1)^k\sum_{\substack{J\subseteq\{0,...,n\}\\ |J|=k}}\mu\left(\bigcap_{j\in J}\widehat\gota_j\right)\,. \end{equation}
Since (Dn$_\cali$7) and Lemma \ref{l:d=m0} can be applied to each summand in \eqref{e:inclescl}, the equality
\begin{equation} \label{e:dvnerdn} \dsas(X_n)=\mu(\xa_n)=\dsan(\widehat\calx_n)  \end{equation} 
holds for every $n$. It is natural to ask what happens when $n$ goes to infinity. The sets $\xa_n$ form a decreasing nested sequence with intersection $Y$: therefore $\mu(\xa_n)$ converges to $\mu(Y)$ and one has $\dsan(\widehat\calx_n)\ge\dsanp(\caly)$ for every $n$, yielding
\begin{equation} \label{e:m>dan+} \mu(Y)=\lim_{n\rightarrow\infty}\mu(\xa_n)\ge\dsanp(\caly)\,. \end{equation}

\begin{thm}[Davenport-Erd\H os] \label{t:de} The analytic density of $Y$ exists for every sequence $(\gota_i)$ and it satisfies $\dsan(\caly)=\mu(Y)=\rho_*\mu(\caly)$.  \end{thm}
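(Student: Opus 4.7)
The plan is to establish both $\dsanp(\caly) \le \mu(Y)$ and $\dsanm(\caly) \ge \mu(Y)$, from which $\dsan(\caly) = \mu(Y)$ follows; the remaining identity $\mu(Y) = \rho_*\mu(\caly)$ is immediate, since $Y = \da - U$ is $\da^*$-invariant (each $\hat\gota_i$ is), so by \eqref{e:bln} one has $\rho^{-1}(\caly) = Y$. The upper bound has essentially been written down already in \eqref{e:m>dan+}: the inclusion $\caly \subseteq \hat\calx_n$ together with \eqref{e:dvnerdn} yields $\dsanp(\caly) \le \dsan(\hat\calx_n) = \mu(\hat X_n)$ for every $n$, and letting $n \to \infty$ gives the bound.

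The heart of the proof is the lower bound $\dsanm(\caly) \ge \mu(Y)$. Writing $A_n = \hat\calx_n \cap \cali(D)$ and $B = \caly \cap \cali(D)$, dominated convergence for the Dirichlet series (dominated by $\|\gotb\|^{-s}$, summable since $\zeta_D(s) < \infty$) yields $\delta(A_n,s) \downarrow \delta(B,s)$ as $n \to \infty$, for each fixed $s > 1$. Combined with $\mu(\hat X_n) \downarrow \mu(Y)$, the lower bound reduces to the key inequality
\begin{equation*}
\delta(A_n,s) \ \ge\ \mu(\hat X_n)\qquad \text{for every } n \text{ and every } s>1,
\end{equation*}
for once this is established, taking $n \to \infty$ gives $\delta(B,s) \ge \mu(Y)$ for every $s > 1$, hence $\dsanm(B) \ge \mu(Y)$.

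To prove the key inequality I would show that $s \mapsto \delta(A_n,s)$ is non-decreasing on $(1,\infty)$; together with the fact that $\lim_{s \to 1^+}\delta(A_n,s) = \mu(\hat X_n)$ (by \eqref{e:dvnerdn}), this gives $\delta(A_n,s) \ge \mu(\hat X_n)$ at once. Monotonicity in $s$ is best argued probabilistically. For $s > 1$, define the probability $P_s$ on $\cali(D)$ by $P_s(\gotb) = \|\gotb\|^{-s}/\zeta_D(s)$, so $\delta(A_n,s) = E_{P_s}[1_{A_n}]$. Under the identification $\gotb \leftrightarrow (v_\gotp(\gotb))_\gotp$, the Euler product $\zeta_D(s) = \prod_\gotp(1 - \|\gotp\|^{-s})^{-1}$ presents $P_s$ as a product measure with independent coordinates, the $\gotp$-th marginal being geometric of parameter $\|\gotp\|^{-s}$ (for $s > 1$, convergence of $\sum_\gotp \|\gotp\|^{-s}$ combined with the first Borel--Cantelli lemma ensures this product measure is concentrated on finite-support sequences, i.e.\ on $\cali(D)$). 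A direct differentiation produces
\begin{equation*}
\frac{d}{ds}\,\delta(A_n,s)\ =\ -\mathrm{Cov}_{P_s}\!\bigl(1_{A_n},\log\|\cdot\|\bigr).
\end{equation*}
Now $A_n$ is a \emph{down-set} in the divisibility order: if $\gotb \mid \gotc \in A_n$ and $\gota_i \mid \gotb$ for some $i \le n$, then $\gota_i \mid \gotc$, a contradiction. Hence $1_{A_n}$ is coordinatewise non-increasing in $(v_\gotp)$, while $\log\|\gotb\| = \sum_\gotp v_\gotp(\gotb)\log\|\gotp\|$ is coordinatewise non-decreasing. Harris' inequality for product measures then forces the covariance to be non-positive, so the derivative is non-negative.

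The main obstacle is setting up this probabilistic framework carefully and ensuring Harris' inequality applies. One must verify that $P_s$ really is the restriction of a product measure on $\prod_\gotp \N$ (which is guaranteed for $s > 1$ by $\zeta_D(s) < \infty$, since $D$ is global Dedekind), and verify coordinatewise monotonicity of $1_{A_n}$ and $\log\|\cdot\|$; everything else is a standard manipulation of Dirichlet series.
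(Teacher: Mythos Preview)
Your proposal is correct and follows the same global architecture as the paper: the upper bound is exactly \eqref{e:m>dan+}, and the lower bound is obtained by showing that $s\mapsto\delta(\hat\calx_n,s)$ is non-decreasing on $(1,\infty)$, then taking the pointwise limit $\delta(\hat\calx_n,s)\to\delta(\caly,s)$.

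The one genuine difference is in how you establish the monotonicity. The paper (Lemma \ref{l:de}) proceeds by a direct Dirichlet-series computation: it differentiates $\dlog\delta(\calu_n,s)$, rewrites everything via the von Mangoldt function \eqref{e:vm}--\eqref{e:dlg}, and reduces to the coefficient inequality \eqref{e:vm>}, which is immediate once one notices that $\calu_n$ is closed under multiples. Your route is to recognise $P_s$ as a product of geometric laws, observe that $1_{A_n}$ is coordinatewise non-increasing (the down-set property) while $\log\|\cdot\|$ is non-decreasing, and invoke Harris' inequality to force the covariance $\le0$. Both arguments hinge on exactly the same combinatorial input---$A_n$ is a down-set in divisibility (equivalently $\calu_n$ is an up-set)---so they are morally the same proof in different dress. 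Your version is perhaps more conceptual and makes the order-theoretic content explicit; the paper's version is more self-contained, avoiding appeal to an external correlation inequality. One small point worth checking explicitly in your write-up: $1_{A_n}$ depends only on the finitely many coordinates $\gotp\in\bigcup_{i\le n}\supp(\gota_i)$, so the application of Harris is effectively finite-dimensional (and the passage to the full $\log\|\cdot\|$ is by monotone or dominated convergence), which sidesteps any worry about infinite-product versions.
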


Davenport and Erd\H os gave two proofs of this result, in \cite{de1} and \cite{de2}, for the case $D=\Z$. Our version extends it to any global Dedekind domain and every sequence of ideals, without any of the additional hypotheses on either $D$ or $(\gota_i)$ we needed in \S\ref{sss:cmplU}. We shall prove the theorem in \S\ref{sss:prfde}, following quite closely the reasoning of \cite{de1}. 

\begin{rmks} \label{r:deN} \begin{itemize} \item[]
\item[{\bf 1.}] In \eqref{e:dvnerdn} we also inserted an asymptotic density. At this stage we cannot say anything about $\dsas^\pm(X)$ in full generality. However, in the case $D=\Z$ the chain of inequalities
\begin{equation} \label{e:as-an} \dsasm(W)\le\dlib_{\log}^-(W)\le\dlib_{\log}^+(W)\le\dsasp(W)\, \end{equation}
holds for every $W\subseteq\Z$; moreover, the logarithmic density $\dlib_{\log}(W)$ exists if and only if so does $\dsan(W)$ and when this happens they are equal (see e.g. \cite[III.1, 
Theorems 1.2 and 1.3]{tnm}). Because of class number $1$, the functions $\zeta_X$ and $\zeta_Y$ are the same and Theorem \ref{t:de} leads to
$$\mu(\xa)\le\mu(Y)=\dsan(Y)=\dsan(X)\stackrel{\text{by \eqref{e:as-an}}}{\le}\dsasp(X)\le\mu(\xa)\,,$$
proving 
$$\dsasp(X)=\mu(\xa)=\mu(Y).$$ 
This is the best  possible result, since Besicovitch in \cite{bsc} gave an example of a set of multiples for which the asymptotic density does not exist.
\item[{\bf 2.}] Put $\calx=\rho(X)$. If $D$ has class number $1$, then $\caly\cap\cali(D)=\calx\cap\cali(D)$ implies $\dsan(\calx)=\dsan(\caly)$. When the class group of $D$ is not trivial, one has to look at $\widehat\calx$ (since $\calx$ contains only principal ideals):  in this case we do not know if $\widehat\calx$ has analytic density and, if yes, whether it is the same as the one of $\caly$.
\item[{\bf 3.}] The equality $\mu(\xa)=\mu(\da^*\xa)$ holds for $D=\Z$, thanks to comparison with $\dsasp$\,, as seen above. It is not clear whether it is always true with a general $D$, or even just in the case of class number $1$. The stronger equality $\mu(Y)=\mu(\xa)$ holds in all the examples of Remark \ref{r:XvY}, because in all of them we have $\mu(Y)=0$. It would be interesting to have an example with $\mu(Y)\neq\mu(\xa)$.
\item[{\bf 4.}] In the case $D=\Z$, it is proved in \cite[Theorem 3]{eht} that $\dsas(X)$ esists when $|\supp(\gota_i+\gota_j)|$ is bounded. This result might be extendable to our setting: Theorem \ref{t:asdmltp} is a first step in this direction.
\item[{\bf 5.}] Classically, the Davenport-Erd\H os theorem is expressed looking at $U$ (the set of multiples) rather than $Y$ or $X$ and it is usually formulated as the equality $\dsl(U)=\mu(U)$. We considered the complements of $U$ because in our setting it is interesting to ask the finer question of determining the closure in $\da$ of a subset of $D$, rather than just its density. However, our final result is about $\caly$, a subset of $\cals(D)$. One could perhaps try and adapt the reasoning in \cite{de2} in order to compute $\dsl(X)$ (the logarithmic density on $D$, as defined in \cite{dl3}). A proof along the lines of \cite{de2} appeared in \cite{huck}, for a logarithmic density defined on $\cali(\calo_K)$ (the ring of integers of a number field). \end{itemize} \end{rmks}
 
\subsubsection{Some formulae} Before proving Theorem \ref{t:de}, we collect some analogues of well-known formulae from analytic number theory. The proofs are standard exercises and are left to the reader. First of all, from \eqref{e:ztX} one obtains
\begin{equation} \label{e:dztX} \frac{d}{ds}\zeta_\calz(s)=\sum_{\gotn\in\calz}\frac{d}{ds}\frac{1}{\|\gotn\|^s}=-\sum_{\gotn\in\calz}\frac{\log\|\gotn\|}{\|\gotn\|^s} \end{equation}
for every $\calz\subseteq\cali(D)$. Also, we define the von Mangoldt function on $\cali(D)$ by
$$\Lambda_D(\gotn)=\begin{cases} \begin{array}{ll} \log\|\gotp\| & \text{ if }\gotn=\gotp^r\text{ for some }r\in\N\,;\\ 0 & \text{ if }\gotn\text{ is not a prime power}. \end{array}\end{cases}$$
As in the classical case, unique factorization (of ideals) immediately implies 
\begin{equation} \label{e:vm} \log\|\gotn\|=\sum_{\gotd|\gotn}\Lambda_D(\gotd)  \end{equation}
and, with slightly more effort,
\begin{equation} \label{e:dlg} \dlog\zeta_D(s)=-\sum_\gotn\frac{\Lambda_D(\gotn)}{\|\gotn\|^s} \end{equation}
where $\dlog$ denotes the logarithmic derivative (that is, the operator sending a meromorphic function $f$ to $\frac{f'}{f}$).

\subsubsection{Proof of the Davenport-Erd\H os theorem} \label{sss:prfde} We start with the analogue of \cite[Lemma 1]{de1}.

\begin{lem} \label{l:de} For every $n$, the ratio $\delta(\widehat\calx_n,s)$ of \eqref{e:dltXs} extends to an increasing continuous function on $[1,\infty)$. \end{lem}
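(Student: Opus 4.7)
The plan is to use inclusion-exclusion to express $\delta(\hat\calx_n, s)$ as a finite sum of entire functions of $s$ — which handles the continuous extension to $[1,\infty)$ in one stroke — and then to establish monotonicity by a stochastic coupling on $\cali(D)$.

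First I would unwind the definitions. Since $\xa_n = \da - \bigcup_{i \le n}\hat\gota_i$ is $\da^*$-invariant, one gets $\hat\calx_n \cap \cali(D) = \{\gotn \in \cali(D) : \gota_i \nmid \gotn \text{ for all } i \le n\}$, whose complement in $\cali(D)$ is $\bigcup_{i=0}^{n}\gota_i\cali(D)$. For any $J \subseteq \{0,\ldots,n\}$, setting $L_J := \mathrm{lcm}_{j \in J}(\gota_j)$ (with $L_\emptyset = D$), the bijection $\gotm \mapsto L_J\gotm$ yields $\bigcap_{j \in J}\gota_j\cali(D) = L_J\cali(D)$ and $\zeta_{L_J\cali(D)}(s) = \|L_J\|^{-s}\zeta_D(s)$. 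Inclusion-exclusion then delivers
$$\delta(\hat\calx_n, s) = \sum_{J \subseteq \{0,\ldots,n\}}(-1)^{|J|}\|L_J\|^{-s},$$
a finite sum of entire functions of $s$. This defines an analytic continuation of $\delta(\hat\calx_n, \cdot)$ to all of $\R$ whose value at $s = 1$ is $\mu(\xa_n)$ by formula \eqref{e:inclescl}, supplying the required continuous extension.

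For monotonicity I would introduce, for $s > 1$, the probability measure $\nu_s$ on $\cali(D)$ given by $\nu_s(\{\gotn\}) := \|\gotn\|^{-s}/\zeta_D(s)$, so that $\delta(\hat\calx_n, s) = \nu_s(\hat\calx_n \cap \cali(D))$. The Euler product together with unique factorization makes the valuations $\{v_\gotp\}_{\gotp}$ independent under $\nu_s$, with $\mathbb{P}_s(v_\gotp \ge k) = \|\gotp\|^{-ks}$. For $1 < s_1 < s_2$, the decrease of $s \mapsto \|\gotp\|^{-ks}$ lets me construct, via the inverse-CDF coupling on each prime and then the product of these couplings, a joint law on $(\gotn^{(s_1)}, \gotn^{(s_2)})$ with $\gotn^{(s_2)} \mid \gotn^{(s_1)}$ almost surely. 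Since $\calu_n := \bigcup_i \gota_i\cali(D)$ is upward-closed for divisibility, this coupling forces $\nu_{s_2}(\calu_n) \le \nu_{s_1}(\calu_n)$, hence $\delta(\hat\calx_n, s_1) \le \delta(\hat\calx_n, s_2)$. Continuity then propagates monotonicity to the endpoint $s = 1$.

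The main technical point to watch is the rigour of the product coupling over infinitely many primes. I would realize it on $\prod_\gotp \mathbb{N}$ with the product measure; the convergence $\sum_\gotp \|\gotp\|^{-s} < \infty$ for $s > 1$ combined with Borel--Cantelli ensures that only finitely many $v_\gotp$ are nonzero almost surely, so the resulting random object genuinely lies in $\cali(D)$. A coupling-free alternative is the FKG inequality applied to the decreasing indicator $\chi_n := \prod_i(1 - \mathbf{1}_{\gota_i \mid \gotn})$ and the increasing function $\log\|\gotn\|$: it yields $\mathbb{E}_s[\chi_n \log\|\gotn\|] \le \mathbb{E}_s[\chi_n]\mathbb{E}_s[\log\|\gotn\|]$, which, after rewriting $\zeta'_\cals/\zeta_\cals$ via \eqref{e:dztX}, is equivalent to $\dlog\delta(\hat\calx_n, s) \ge 0$ on $(1,\infty)$.
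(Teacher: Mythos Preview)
Your proof is correct and takes a genuinely different route from the paper's.

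For the continuous extension, the paper simply notes that $\dsan(\hat\calx_n)$ exists (from \eqref{e:dvnerdn}) and hence $\delta(\hat\calx_n,s)$ extends continuously to $s=1$. Your inclusion--exclusion formula $\delta(\hat\calx_n,s)=\sum_{J}(-1)^{|J|}\|L_J\|^{-s}$ is more explicit and buys you analyticity on all of $\R$ at once, not just continuity at the endpoint.

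For monotonicity, the paper works analytically: it shows $\dlog\zeta_{\calu_n}(s)\le\dlog\zeta_D(s)$ by expanding both sides via the von~Mangoldt identity \eqref{e:vm} and verifying the termwise Dirichlet-coefficient inequality $\buno_n(\gotn)\log\|\gotn\|\ge\sum_{\gotd\mid\gotn}\buno_n(\gotd)\Lambda_D(\gotn\gotd^{-1})$, which holds because $\calu_n$ is closed under taking multiples. Your coupling argument is a probabilistic repackaging of exactly this closure property: stochastic domination of the $\nu_{s_2}$-valuations by the $\nu_{s_1}$-valuations, together with upward-closedness of $\calu_n$, yields the same inequality without ever writing down a Dirichlet series. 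Your FKG alternative sits in between the two: the negative covariance of the decreasing indicator $\chi_n$ with the increasing function $\log\|\gotn\|$ under the product measure $\nu_s$ is precisely what the paper's termwise inequality establishes by hand. So all three arguments ultimately exploit the same combinatorial fact (divisibility-monotonicity of $\calu_n$), but yours isolates it more cleanly and avoids the von~Mangoldt machinery entirely.
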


\begin{prf} For a general $\calz\subseteq\cals(D)$, it is clear from \eqref{e:dltXs} and \eqref{e:d-anltid} that $\delta(\calz,s)$ is continuous on $(1,\infty)$ and can be continuously extended to $1$ if and only if $\dsan(\calz)$ exists. We have already seen that this is the case for our sets $\widehat\calx_n$\,.

In order to prove monotonicity, put $\calu_n:=\rho(U_n)$ and observe that one has 
$$\delta(\widehat\calx_n,s)=1-\delta(\calu_n,s)\,,$$
because $\widehat\calx_n$ and $\calu_n$ are both compact open and $\cals(D)$ is their disjoint union (both statements follow from Remark \ref{r:daqt}, using the fact that $U_n$ and $\xa_n$ are both compact open and $\da^*$-invariant). We shall show that the derivative $\frac{d}{ds}\delta(\calu_n,s)$ is non-positive for every $s>1$. 

Since $\delta(\calu_n,s)>0$ is obviously true, it is enough to show
$$0\ge\dlog(\delta(\calu_n,s))=\dlog\zeta_{\calu_n}(s)-\dlog\zeta_D(s)\,,$$ 
which is clearly equivalent to
\begin{equation} \label{e:dver} \zeta_{\calu_n}(s)\cdot\dlog\zeta_D(s)\ge\frac{d}{ds}\zeta_{\calu_n}(s) \end{equation}
By \eqref{e:ztX}, \eqref{e:dztX} and \eqref{e:dlg}, we can rewrite \eqref{e:dver} as
$$\sum_{\gotn\in\cali(D)}\frac{\buno_n(\gotn)}{\|\gotn\|^s}\cdot\left(-\sum_{\gotn\in\cali(D)} 
\frac{\Lambda_D(\gotn)}{\|\gotn\|^s}\right)\ge-\sum_{\gotn\in\cali(D)}\frac{\buno_n(\gotn)\log\|\gotn\|}{\|\gotn\|^s} $$
where $\buno_n$ is the characteristic function of $\calu_n$. The last inequality follows if we can prove 
\begin{equation} \label{e:vm>} \buno_n(\gotn)\log(\|\gotn\|)\ge\sum_{\gotd|\gotn}\buno_n(\gotd)\Lambda_D(\gotn\gotd^{-1})\,. \end{equation}
If $\buno_n(\gotn)=1$, then \eqref{e:vm>} is immediate from \eqref{e:vm}. In the other case, \eqref{e:idmlt} shows that an ideal is in $\calu_n$ if and only if it is a multiple of some 
$\gota_i$, with $i\le n$: hence $\buno_n(\gotn)=0$ implies $\buno_n(\gotd)=0$ for every $\gotd$ dividing $\gotn$ and \eqref{e:vm>} reduces to $0=0$. \end{prf}

\begin{proof}[Proof of Theorem \ref{t:de}] By equality \eqref{e:dvnerdn} and Lemma \ref{l:de} it follows
\begin{equation} \label{e:dlt>m}  \delta(\widehat\calx_n,s)\ge\delta(\widehat\calx_n,1)=\dsan(\widehat\calx_n)=\mu(\xa_n)\ge\mu(Y)\,.  \end{equation}
Moreover we have
$$\lim_{n\rightarrow\infty}\zeta_{\widehat\calx_n}(s)=\zeta_\caly(s)\;\;\;\;\forall\,s\in(1,\infty)$$
because $\cap_n\widehat\calx_n=\caly$. Therefore as $n$ varies the functions $\delta(\widehat\calx_n,s)$ converge pointwise to $\delta(\caly,s)$ on $(1,\infty)$ and \eqref{e:dlt>m} yields
$$\delta(\caly,s)\ge\mu(Y)$$
for every $s\in(1,\infty)$, which implies $\dsanm(\caly)\ge\mu(Y)$. Now use \eqref{e:m>dan+}. \end{proof}

\section{Eulerian sets} \label{s:elr}

In this section $D$ is any Dedekind domain satisfying Assumption \ref{a:cntb} and $d$ is a density on $D^n$ such that (Dn1)-(Dn7) hold.

\subsection{Eulerian sets}

\begin{dfn} \label{d:elr} Let $X$ be a subset of $\da^n$. \begin{enumerate}
\item For any prime ideal $\gotp$ of $D$, we write $X(\gotp)$ to denote the closure of $\hpi_{\gotp^\infty}(X)$ in $\dap^n$.
\item We say that $X\subseteq D^n$ is {\em Eulerian} if 
$$\xa=\prod_\gotp X(\gotp)$$
in the identification $\da=\prod\dap$ induced by \eqref{e:crt}.
\item We say that $X$ is {\em openly Eulerian} if it is Eulerian and each $X(\gotp)$ is open. \end{enumerate} \end{dfn}

\begin{eg} Ideals of $D$ provide obvious examples of openly Eulerian sets. More generally, given $X\subseteq\da^n$ and $\gota\in\cali(D)$ the set $X_\gota$ is openly Eulerian if and only if $\hpi_\gota(X)$ can be written as a product in the decomposition
$$(\da/\widehat\gota)^n\simeq\prod_{\gotp|\gota}(\da/\widehat\gotp^{v_\gotp(\gota)})^n$$
induced by the Chinese Remainder Theorem. \end{eg}

Let $\mu_\gotp$ denote the Haar measure on $\dap^n$ (normalized so to have $\mu_\gotp(\dap^n)=1$). Corollary \ref{c:egprd} yields
$$\mu\left(\prod_\gotp X(\gotp)\right)=\prod_\gotp\mu_\gotp(X(\gotp))\,.$$

\begin{lem} \label{l:d<prd} The inequality
\begin{equation} \label{e:banale2} d^+(X)\le\prod_\gotp\mu_\gotp(X(\gotp))\, \end{equation} 
holds for every $X\subseteq D^n$ and every density satisfying {\rm (Dn1)-(Dn7)}. \end{lem}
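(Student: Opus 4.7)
The plan is to chain two inequalities: first bound $d^+(X)$ by $\mu(\xa)$, then bound $\mu(\xa)$ by the Eulerian product on the right-hand side.

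The first step is essentially free: Corollary \ref{c:m>d} gives $d^+(X) \le \mu(\xa)$ for every density satisfying (Dn1)--(Dn7) and every $X\subseteq\da^n$. So it suffices to prove the geometric inequality
\begin{equation*}
\mu(\xa)\le\prod_\gotp\mu_\gotp(X(\gotp))\,.
\end{equation*}

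For this, I would show the set-theoretic inclusion $\xa\subseteq\prod_\gotp X(\gotp)$ and then invoke Corollary \ref{c:egprd} to evaluate the measure of the right-hand product. The inclusion is a straightforward continuity argument: each projection $\hpi_{\gotp^\infty}\colon\da^n\sri\dap^n$ is continuous and $\dap^n$ is Hausdorff, so $\hpi_{\gotp^\infty}(\xa)$ is contained in the closure of $\hpi_{\gotp^\infty}(X)$, which by definition is $X(\gotp)$. Writing an element $z\in\xa$ as $z=(z_\gotp)_\gotp$ under the identification \eqref{e:crt}, we have $z_\gotp=\hpi_{\gotp^\infty}(z)\in X(\gotp)$ for every $\gotp$, hence $z\in\prod_\gotp X(\gotp)$.

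Since each $X(\gotp)$ is closed in $\dap^n$ (hence compact), Corollary \ref{c:egprd} applies and gives
\begin{equation*}
\mu\!\left(\prod_\gotp X(\gotp)\right)=\prod_\gotp\mu_\gotp(X(\gotp))\,.
\end{equation*}
Combining this with the inclusion and with Corollary \ref{c:m>d} yields \eqref{e:banale2}. There is no real obstacle here; the statement is a direct consequence of results already established, and the only thing worth checking carefully is that the definition of $X(\gotp)$ as the closure of $\hpi_{\gotp^\infty}(X)$ is precisely what makes the continuity step go through.
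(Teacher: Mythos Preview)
Your proof is correct and follows essentially the same route as the paper's: both use Corollary \ref{c:m>d} to bound $d^+(X)$ by $\mu(\xa)$, establish the inclusion $\xa\subseteq\prod_\gotp X(\gotp)$, and invoke Corollary \ref{c:egprd} for the product formula. The only cosmetic difference is that the paper expresses the inclusion via the identity $\prod_\gotp X(\gotp)=\bigcap_\gotp X_{\gotp^\infty}$ (using \eqref{e:Xpinf}), whereas you argue directly from the continuity fact $\hpi_{\gotp^\infty}(\xa)\subseteq\overline{\hpi_{\gotp^\infty}(X)}=X(\gotp)$; these are two phrasings of the same observation, and your mention of Hausdorffness is harmless but unnecessary.
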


\begin{prf} For any $X$ one has $\prod_\gotp X(\gotp)=\bigcap_\gotp X_{\gotp^\infty}\,,$ because, by \eqref{e:Xpinf}, 
$$X_{\gotp^\infty}=X(\gotp)\times\prod_{\gotq\neq\gotp}\da_\gotq^n\;.$$
Thus the inclusion $\xa\subseteq\prod X(\gotp)$ is always true and \eqref{e:banale2} follows Corollary \ref{c:m>d}. \end{prf}

\begin{prop} \label{p:d>mc} Assume $\xa$ is contained in a set $C=\prod_\gotp C_\gotp$, where every $C_\gotp\subseteq\dap^n$ is open and $\prod\mu_\gotp(C_\gotp)>0$. Then 
$d^+(X)\ge\mu(C)$ implies $\xa=C$ and $X(\gotp)=C_\gotp$ for every $\gotp$. \end{prop}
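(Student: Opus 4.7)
The plan is to sandwich the hypothesis $d^+(X)\ge\mu(C)$ between the tautological upper bounds on $d^+(X)$ and on $\mu(\xa)$, force every intermediate inequality to become an equality, and then extract both conclusions from $\mu(\xa)=\mu(C)$.

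First I would write the chain
$$\mu(C)\le d^+(X)\le\mu(\xa)\le\mu(C),$$
where the first inequality is the hypothesis, the second is Corollary \ref{c:m>d}, the third follows from $\xa\subseteq C$, and the final equality $\mu(C)=\prod_\gotp\mu_\gotp(C_\gotp)$ is the open analogue of Corollary \ref{c:egprd} (which amounts to the uniqueness of the Haar measure on $\da^n\simeq\prod\dap^n$ as the product measure). Consequently all terms coincide; in particular $\mu(\xa)=\mu(C)$.

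Since $\xa$ is closed and $C$ is open (as a product of open sets is open in the product topology), the set $C\setminus\xa$ is open in $\da^n$ and has measure $\mu(C)-\mu(\xa)=0$. Because $\da^n$ is a compact topological group, every nonempty open subset has strictly positive Haar measure, so $C\setminus\xa=\emptyset$ and thus $\xa=C$. For the local identities, observe that each $C_\gotq$ is nonempty (from $\prod\mu_\gotp(C_\gotp)>0$), so the projection of the product satisfies $\hpi_{\gotp^\infty}(C)=C_\gotp$; applied to $\xa=C$, this gives $\hpi_{\gotp^\infty}(\xa)=C_\gotp$, which is then compact (continuous image of a compact set) and hence closed in $\dap^n$. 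On the one hand, continuity of $\hpi_{\gotp^\infty}$ yields $C_\gotp=\hpi_{\gotp^\infty}(\xa)\subseteq\overline{\hpi_{\gotp^\infty}(X)}=X(\gotp)$; on the other, $X\subseteq C$ gives $\hpi_{\gotp^\infty}(X)\subseteq C_\gotp$, and taking closures in the closed set $C_\gotp$ yields $X(\gotp)\subseteq C_\gotp$. Both inclusions together give $X(\gotp)=C_\gotp$.

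The only nontrivial point in the plan is the product formula $\mu(\prod C_\gotp)=\prod\mu_\gotp(C_\gotp)$ for a product of open (rather than closed) sets, which is not literally the content of Corollary \ref{c:egprd}; this can be handled either by inner regularity of Haar measure (approximating each $C_\gotp$ from inside by compact subsets and passing to the limit via Corollary \ref{c:egprd}) or by citing the uniqueness of the product Haar measure, and so deserves a brief explicit justification in the write-up.
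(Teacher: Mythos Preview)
Your chain $\mu(C)\le d^+(X)\le\mu(\xa)\le\mu(C)$ is fine and gives $\mu(\xa)=\mu(C)$, and the closing argument for $X(\gotp)=C_\gotp$ is also correct. The gap is the sentence ``$C$ is open (as a product of open sets is open in the product topology)''. In the product topology an infinite product $\prod_\gotp C_\gotp$ is open only when $C_\gotp=\dap^n$ for all but finitely many $\gotp$; in general $C$ is merely a $G_\delta$ set. A concrete instance within the scope of the proposition is $C_\gotp=\dap-\gotp^2\dap$: the product $\prod\mu_\gotp(C_\gotp)=\zeta_D(2)^{-1}>0$, yet $C$ is the set of square-free elements of $\da$, which by Corollary~\ref{c:kpfree} has empty interior. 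So $C\setminus\xa$ need not be open, and your conclusion ``open set of measure $0$ is empty'' does not apply.

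The paper's proof handles exactly this point. Rather than asserting $C$ is open, it argues by contradiction: if some $c\in C\setminus\xa$ exists, take a basic open neighbourhood $U=\hpi_\gota^{-1}(x)$ of $c$ disjoint from $\xa$. Because $U$ differs from $\dap^n$ only at the finitely many primes dividing $\gota$, the intersection $U\cap C$ is again a product $\prod_{\gotp\mid\gota}B_\gotp\times\prod_{\gotp\nmid\gota}C_\gotp$ with each $B_\gotp$ a nonempty open subset of $\dap^n$, and one computes $\mu(U\cap C)>0$ since the infinite tail is unchanged and each of the finitely many new factors has positive measure. This contradicts $\mu(C\setminus\xa)=0$. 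Your argument is easily repaired along these lines, but as written the key topological step is false.
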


\begin{prf} Assume $\xa\neq C$. Then there is an open set $U$ such that $\xa\subseteq C-U$ and $U\cap C\neq\emptyset$. Since \eqref{e:base} is a base for the topology, there is no loss of 
generality in assuming $U=\hpi_\gota^{-1}(x)$ for some $\gota\in\cali(D)$ and some $x=\pi_\gota(\tilde x)\in(D/\gota)^n$, that is, $U=\prod_\gotp(\tilde x+\gota \dap^n)$. One has 
$\gota\dap^n=\dap^n$ unless $\gotp|\gota$ and thus
$$U\cap C=\prod_{\gotp|\gota}B_\gotp\times\prod_{\gotp\nmid\gota}C_\gotp\,,$$
where $B_\gotp=C_\gotp\cap(\tilde x+\gota \dap^n)$ is an open subset of $\dap$. Hence
$$\mu(U\cap C)=\prod_{\gotp|\gota}\mu_\gotp(B_\gotp)\cdot\prod_{\gotp\nmid\gota}\mu_\gotp(C_\gotp)>0\,,$$
because $\mu_\gotp(B_\gotp)>0$ for each of the finitely many $\gotp$ dividing $\gota$. But then one gets a contradiction from
$$\mu(\xa)\stackrel{\text{by \eqref{e:m>d}}}{\ge} d^+(X)\ge\mu(C)>\mu(C-U)\ge\mu(\xa)\,.$$
Finally, $\xa=C$ implies $X(\gotp)=\hpi_{\gotp^\infty}(\xa)=\hpi_{\gotp^\infty}(C)=C_\gotp$. \end{prf}

\begin{cor} \label{c:d>mc} Assume every $X(\gotp)$ is open and $\prod\mu_\gotp(X(\gotp))>0$. Then $X$ is openly Eulerian if \eqref{e:banale2} is an equality. \end{cor}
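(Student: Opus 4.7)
The plan is to simply invoke Proposition \ref{p:d>mc} with $C_\gotp:=X(\gotp)$ and $C:=\prod_\gotp X(\gotp)$. First I would observe that by definition each $X(\gotp)$ is closed in $\dap^n$ (being the closure of $\hpi_{\gotp^\infty}(X)$), and by hypothesis each $X(\gotp)$ is also open; thus every $C_\gotp$ is a closed subset of $\dap^n$, so Corollary \ref{c:egprd} applies and gives
\[
\mu(C)=\prod_\gotp\mu_\gotp(X(\gotp))\,.
\]

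Next, I would note that Lemma \ref{l:d<prd} (or rather the inclusion $\xa\subseteq\prod_\gotp X(\gotp)$ established inside its proof, via $\prod_\gotp X(\gotp)=\bigcap_\gotp X_{\gotp^\infty}$) gives $\xa\subseteq C$ for free. The hypothesis $\prod\mu_\gotp(X(\gotp))>0$ is exactly the positivity requirement in Proposition \ref{p:d>mc}, and the assumed equality in \eqref{e:banale2} rewrites as
\[
d^+(X)=\prod_\gotp\mu_\gotp(X(\gotp))=\mu(C)\,,
\]
so in particular $d^+(X)\ge\mu(C)$. Proposition \ref{p:d>mc} then yields $\xa=C=\prod_\gotp X(\gotp)$, which is the Eulerian condition; combined with the openness of each $X(\gotp)$ this gives that $X$ is openly Eulerian.

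There is no genuine obstacle: the whole content has been packaged into Proposition \ref{p:d>mc} and Corollary \ref{c:egprd}, and the corollary is essentially a restatement in which the hypotheses of the proposition are all directly visible. The only small point worth checking is that the $C_\gotp$ we feed into Proposition \ref{p:d>mc} really are the $X(\gotp)$ and not some larger open set containing them, but this is automatic from the openness assumption together with $X(\gotp)$ being closed by construction.
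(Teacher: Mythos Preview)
Your proposal is correct and follows exactly the same approach as the paper, which simply says ``Apply Proposition \ref{p:d>mc} with $C_\gotp=X(\gotp)$.'' You have just spelled out in full the verifications (that $\xa\subseteq C$, that $d^+(X)\ge\mu(C)$, etc.) which the paper leaves implicit.
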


\begin{prf} Apply Proposition \ref{p:d>mc} with $C_\gotp=X(\gotp)$. \end{prf}

\subsubsection{Eulerianity and polynomial maps} Polynomial maps are well-behaved with regard to Eulerianity.

\begin{prop} \label{p:plyneulr} For any $f\in D[x_1,...,x_n]$ and $X\subseteq D^n$, one has: \begin{itemize}
\item[{\bf (a)}] if $X$ is Eulerian, then so is $f(X)$;
\item[{\bf (b)}] if $f(X)$ is Eulerian and $\xa$ is the closure of $f^{-1}\big(f(X)\big)$, then $X$ is Eulerian.\end{itemize} \end{prop}

\begin{prf} Polynomial maps commute with ring homomorphisms: hence, by \eqref{e:crt}, we have
\begin{equation} \label{e:plynmprd} f\left(\prod_\gotp A_\gotp\right)=\prod_\gotp f(A_\gotp)\;\;\text{ and }\;\;f^{-1}\left(\prod_\gotp B_\gotp\right)=\prod_\gotp f^{-1}(B_\gotp) \end{equation}
for all $A_\gotp\subseteq\dap^n$, $B_\gotp\subseteq\dap$\,. Besides, the map induced by $f$ is continuous on $\da^n$ and on each $\dap^n$. \begin{itemize}
\item[{\bf (a)}] Since $X$ is dense in $X(\gotp)$ and $X(\gotp)$ is compact, continuity implies that the closure of $f(X)$ in $\dap$ is 
exactly $f(X(\gotp))$. Replacing $\dap$ with $\da$, the same reasoning shows that $f(\xa)$ is the closure  of $f(X)$ in $\da$. Now just apply \eqref{e:plynmprd} to $\xa=\prod X(\gotp)$.
\item[{\bf (b)}] Put $Y:=f(X)$. Then we have $\widehat Y=\prod_\gotp Y(\gotp)$ and $\xa=\widehat{f^{-1}(Y)}=f^{-1}(\widehat Y)$ by hypothesis, so \eqref{e:plynmprd} yields
$$\xa=\prod_\gotp f^{-1}(Y(\gotp))$$
and hence $X(\gotp)=\hpi_{\gotp^\infty}(\xa)=f^{-1}(Y(\gotp))$. \end{itemize} \end{prf}

\begin{rmk} In general, the inverse image of an Eulerian set by a polynomial map is not Eulerian: for a trivial example, consider $f(x)=x(x-1)$ and note that the set $\{0\}$ is Eulerian, but $f^{-1}(\{0\})$ is not (since it contains two points in $D$ and infinitely many in $\da$, because the latter is a product of the rings $\dap$). Thus the condition on $\xa$ in Proposition \ref{p:plyneulr}.(b) is somehow necessary. \end{rmk}

\subsubsection{Eulerianity and strong approximation} \label{sss:strapprx} We say that an affine $D$-scheme $Z$ satisfies the strong approximation theorem with respect to  $D$ if $Z(D)$ is dense in $Z(\da)$ (with respect to the obvious topology as a subset of $\da^n$). For example, the strong approximation theorem holds for the affine space $\A^n$ and for the special linear group $\SL_n$ (see \cite[VII, \S2, n.4]{b-ac}), but not for $\GL_n$.

\begin{rmk} \label{r:strng} Usually strong approximation is expressed in terms of adeles. More precisely, let $F$ be the fraction field of $D$ and consider the ring of $D$-adeles $\hat 
F:=F\otimes_D\da$ (if $D$ is the ring of integers of a number field, then $\hat F$ is the usual ring of finite adeles). The more common statement is that the strong approximation theorem is true for $Z$ if $Z(F)$ is dense in $Z(\hat F)$. The sets $\hat F$ and $Z(\hat F)$ are given the restricted product topology: it follows that $Z(\da)$ is compact open in $Z(\hat F)$. In particular, any open subset $U\subseteq Z(\da)$ is also open in $Z(\hat F)$ and therefore $U\cap Z(F)\neq\emptyset$ if strong approximation holds. One can conclude that the usual version of strong approximation implies the one we have given above by observing $Z(D)=Z(\da)\cap Z(F)$.  \end{rmk}

By functoriality, the isomorphism \eqref{e:crt} yields $Z(\da)=\prod Z(\dap)$. If $X=Z(D)$ for some affine scheme $Z$, then $X(\gotp)=Z(\dap)$ for every $\gotp$ and therefore $X$ is Eulerian if and only if $Z$ satisfies the strong approximation theorem. However, the notion of Eulerian allows for more generality. For example, the set of square-free integers is Eulerian (as we shall prove in Corollary \ref{c:kfrelr}), but as far as we know it cannot be obtained as the set of $\Z$-points of a scheme.

\begin{prop} \label{p:str} Let $G$ be an algebraic subgroup of $\GL_n$ satisfying the strong approximation theorem. Let $X$ be a subset of $D^n$, stable under the action of $G(D)$. If $X$ is contained in a $G(\da)$-orbit, then it is Eulerian. \end{prop}

\begin{prf} The group 
$$\GL_n(\da)=\varprojlim\GL_n(D/\gotn)$$ 
has the inverse limit topology and therefore it is Hausdorff and compact. Moreover, the action of $\GL_n(\da)$ on $\da^n$ is continuous: it follows that for any subgroup $H$ of $\GL_n(\da)$, the closure of an $H$-orbit is the $\widehat H$-orbit of the same element. 

For $H=G(D)$ strong approximation yields $\widehat H=G(\da)$. The $G(D)$-orbit of $v\in X$ is still in $X$ and its closure is a $G(\da)$-orbit: it follows that $\xa$ is a $G(\da)$-orbit. Finally, the equality $G(\da)=\prod G(\dap)$ shows that a $G(\da)$-orbit is a product of $G(\dap)$-orbits (hence Eulerian).  \end{prf}
 
\begin{rmk} In particular, Proposition \ref{p:str} applies when $X$ is a $G(D)$-orbit. \end{rmk}

Let $R$ be a ring. The {\em greatest common divisor} of $a_1,\dots,a_n\in R$ is the ideal generated by these elements. In particular, we say that the $n$-tuple $(a_1,...,a_n)$ is coprime if the greatest common divisor of its elements is $R$.

\begin{cor} \label{c:cprpr} For $\gota\in\cali(D)$ and $n\ge2$, let $X\subset D^n$ be the set of all $n$-tuples with greatest common divisor $\gota$. Then $X$ is Eulerian. \end{cor}

\begin{prf} Let $Y\subseteq\da^n$ be the set of all $n$-tuples with greatest common divisor $\widehat\gota$. Then $X=Y\cap D^n$. Lemma \ref{l:mxcmdv} below shows that $Y$ consists of one 
$SL_n(\da)$-orbit; in particular this implies that $Y$ is $\SL_n(D)$-stable, as obviously is $D^n$, and thus so is $X$. As already mentioned strong approximation holds for $\SL_n$ and so all conditions in Proposition \ref{p:str} are satisfied. \end{prf}

\begin{lem} \label{l:mxcmdv} For any $n\ge1$, let $Y$ be the set of all $n$-tuples with greatest common divisor $\widehat\gota$ in $\da^n$. Then $Y$ consists of a single $\SL_n(\da)$-orbit. \end{lem}

\begin{prf} Elements of $D^n$ and $\da^n$ will be thought of as column vectors, so that $\GL_n$ acts on the left.

By Lemma \ref{l:clpr}, the ideal $\widehat\gota$ is principal: let $a\in\da$ be a generator. We claim that $Y$ is the $\SL_n(\da)$-orbit of the vector $v_a$ with entries $a,0,\dots,0$.

For any $g\in\SL_n(\da)$, entries of the first row form a coprime $n$-tuple: hence $\SL_n(\da)\cdot v_a\subseteq Y$. For the opposite inclusion, we proceed by induction on $n$. The claim is trivial for $n=1$. For $n\ge2$, assume $a_1,\dots,a_n$ are the entries of $v\in Y$ and consider the vector $w\in\da^{n-1}$ whose entries $a_1,\dots,a_{n-1}$ have greatest common divisor $b\da$. By the induction hypothesis there is $g\in\SL_{n-1}(\da)$ such that $g\cdot v_b=w$. Besides we can write $b=ac_1$ and $a_n=ac_2$ where $c_1,c_2$ are coprime. Thus there is $h\in\SL_2(\da)$ having $c_1$ and $c_2$ in the first column. By appropriate embeddings of $\SL_{n-1}$ and $\SL_2$ in $\SL_n$, we can think of $g$ and $h$ as elements of $\SL_n(\da)$ so to have $hg^{-1}\cdot v_a=v$. \end{prf}

With no additional effort, one can give a more precise description of the sets in Corollary \ref{c:cprpr} and Lemma \ref{l:mxcmdv}. Let $Y_1$ be the set of coprime $n$-tuples in 
$\da^n$. Then for any $a\in\da$ the set of $n$-tuples with greatest common divisor $a\da$ is $aY_1$ (since - in the notation of the proof above - one has $v_a=av_1$). Besides 
$v$ is in $Y_1$ if and only if for every prime ideal $\gotp$ there is at least one entry of $v$ which is not in $\gotp\da$ - that is, 
$$Y_1=\prod_{\gotp\in\calp(D)}\big(\dap-\gotp\dap\big)^n\,.$$
Thus if $X\subset D^n$ is the set of all $n$-tuples with greatest common divisor $\gota$ and $a\in\da$ is a generator of $\widehat\gota$, we obtain
$$\xa=aY_1=a\prod_{\gotp\in\calp(D)}\big(\dap-\gotp\dap\big)^n\,,$$
which has measure
\begin{equation} \label{e:prbbgcd} \mu(\xa)=\frac{1}{\|\gota\|^n}\prod_{\gotp\in\calp(D)}\left(1-\frac{1}{\|\gotp\|^n}\right)=\frac{1}{\|\gota\|^n\cdot\zeta_D(n)}\;, \end{equation}
with $\zeta_D$ as in \eqref{e:Ddkzt}.

\subsubsection{Complements of ideals and sets of Eulerian type} \label{sss:appirr} Given a partition $\calp(D)=\bigsqcup_{i\in\N}T_i$, define
\begin{equation} \label{e:diTi} \da_i:=\prod_{\gotp\in T_i}\dap\;\;\text{ and }\;\;\hpi_i:=\prod_{\gotp\in T_i}\hpi_{\gotp^\infty}\,, \end{equation}
so to have $\hpi_i\colon\da\twoheadrightarrow\da_i$ and $\da\simeq\prod_i\da_i$. Also, for $X\subseteq D^n$ let $X(T_i)$ be the closure of $\hpi_i(X)$ in $\da_i^n$. 

\begin{dfn} \label{d:elrtyp} A set $X\subseteq D^n$ is {\em of Eulerian type} if there exists a partition of $\calp(D)$ into finite subsets $T_i$ such that
$$\xa=\prod_{i\in\N}X(T_i)\,.$$
We say that $X$ is {\em of openly Eulerian type} if moreover each $X(T_i)$ is open in $\da_i^n$. \end{dfn}

\begin{prop} \label{p:cprelrtyp} Assume $D$ is a global Dedekind domain. Let $X=D-\cup_i\gota_i$ where $(\gota_i)_{i\in\N}$ is a sequence of pairwise coprime ideals of $D$ satisfying 
$\Omega(\gota_i)>1$ for almost every $i$. Then: \begin{enumerate}
\item $X$ is of openly Eulerian type;
\item $X$ is Eulerian if and only if each $\gota_i$ is a prime power. \end{enumerate} \end{prop}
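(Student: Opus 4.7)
The plan is to deduce both parts from Proposition \ref{p:appirr}, which under the stated hypotheses gives $\xa=Y:=\da-\bigcup_i\hat\gota_i$. The coprimality of the $\gota_i$ implies that the supports $S_i:=\supp(\gota_i)$ are pairwise disjoint, so the decomposition $\da\simeq\prod_\gotp\dap$ from Theorem \ref{t:crt} yields the explicit product description
\[
Y=\prod_{\gotp\in R}\dap\,\times\,\prod_iY_i,\qquad Y_i:=\prod_{\gotp\in S_i}\dap\,-\,\prod_{\gotp\in S_i}\gotp^{v_\gotp(\gota_i)}\dap,
\]
where $R:=\calp(D)-\bigcup_iS_i$. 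Each $Y_i$ is open, being the complement of the finite-index (hence compact open) ideal $\prod_{\gotp\in S_i}\gotp^{v_\gotp(\gota_i)}\dap$. Moreover, for any $T\subseteq\calp(D)$ the equality $X(T)=\hpi_T(Y)$ holds: continuity of $\hpi_T$ together with density of $X$ in $Y=\xa$ gives $\supseteq$, while compactness of $Y$ makes $\hpi_T(Y)$ already closed, giving $\subseteq$.

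For part (1), I would take as partition of $\calp(D)$ the blocks $S_i$ together with the singletons $\{r\}$ for $r\in R$; all blocks are finite. The previous paragraph then gives $X(S_i)=Y_i$ and $X(\{r\})=\hat D_r$, both open, and their product recovers $Y=\xa$, exhibiting $X$ as of openly Eulerian type.

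For part (2), I would first compute single-prime closures $X(\gotp)$. If $\gotp\in R$, then $X(\gotp)=\dap$. If $\gotp\in S_i$ with $\omega(\gota_i)\ge 2$, again $X(\gotp)=\dap$: given $x_\gotp\in\dap$, lift to a point of $Y_i$ by picking some $\gotq\in S_i-\{\gotp\}$ and setting $x_\gotq=1$, so $v_\gotq(x_\gotq)=0<v_\gotq(\gota_i)$. Finally if $\gota_i=\gotp^{e_i}$ is a prime power (so $S_i=\{\gotp\}$), then $X(\gotp)=\dap-\gotp^{e_i}\dap$. Consequently $\prod_\gotp X(\gotp)$ encodes only the local conditions coming from prime-power $\gota_i$'s, and coincides with $Y$ precisely when every $\gota_i$ is a prime power, yielding the ``if'' direction.

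The main obstacle is the converse: assuming some $\gota_i$ has $\omega(\gota_i)\ge 2$, I must exhibit a point $x\in\prod_\gotp X(\gotp)$ that lies in $\hat\gota_i$ and hence outside $\xa=Y$. I would build $x=(x_\gotp)$ componentwise, choosing $x_\gotp\in\gotp^{v_\gotp(\gota_i)}\dap$ for $\gotp\in S_i$ (placing $x$ in $\hat\gota_i$), $x_\gotp\in\dap^*$ for $\gotp\in\bigcup_{j\ne i}S_j$, and $x_\gotp$ arbitrary for $\gotp\in R$. The second choice, using pairwise disjointness of supports, both prevents $x$ from lying in any $\hat\gota_j$ with $j\ne i$ and guarantees $v_\gotp(x_\gotp)=0<e_j$ at every prime $\gotp$ for which $\gota_j=\gotp^{e_j}$ is a prime power (with $j\ne i$). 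Thus $x\in\prod_\gotp X(\gotp)\setminus\xa$, so $X$ is not Eulerian.
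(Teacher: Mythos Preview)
Your proof is correct and follows essentially the same approach as the paper: both deduce $\xa=Y$ from Proposition~\ref{p:appirr}, use the product decomposition of $Y$ induced by the pairwise-disjoint supports to establish part~(1), and for part~(2) observe that the local projections $X(\gotp)$ fail to detect the condition imposed by any $\gota_i$ with $\omega(\gota_i)\ge2$. Your argument is somewhat more detailed than the paper's---in particular, you make explicit the identity $X(T)=\hpi_T(Y)$ via density and compactness, and you give a full construction of a witness point for the ``only if'' direction where the paper merely indicates the relevant inequality $\da_{\gotp\gotq}-\gotp\gotq\da_{\gotp\gotq}\neq(\dap-\gotp\dap)\times(\da_\gotq-\gotq\da_\gotq)$.
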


\begin{prf} As in \S\ref{sss:cmplU}, we put $Y=\da-\cup_i\widehat\gota_i$\,. Also, let $\calp_0$ denote the complement of $\cup_i\supp(\gota_i)$ in $\calp(D)$. We consider the partition
$$\calp(D)=\bigsqcup_{i\in\N}\supp(\gota_i)\,\sqcup\bigsqcup_{\gotp\in\calp_0}\{\gotp\}$$ 
and we abbreviate $X(\supp(\gota_i))$ into $X(\gota_i)$. Statement (1) follows by Proposition \ref{p:appirr} and the chain of inclusions
\begin{equation} \label{e:elrtyp} \xa\subseteq\prod_{i\in\N}X(\gota_i)\times\prod_{\gotp\in\calp_0}X(\gotp)\subseteq\prod_{i\in\N}\big(D_i-\gota_iD_i\big)\times\prod_{\gotp\in\calp_0}\dap=Y \end{equation}
where $D_i=\prod_\gotp\dap$ with the product taken over $\supp(\gota_i)$. The first inclusion is straightforward by a reasoning already used in the proof of Lemma \ref{l:d<prd}. The characterization of $Y$ given in \eqref{e:vltY} implies the second inclusion and the final equality.

In statement (2) the ``if'' part is an obvious special case of statement (1). As for the ``only if'' part, one can understand why it holds by observing that if $\gotp$ and $\gotq$ are distinct primes and $\da_{\gotp\gotq}=\dap\times\da_\gotq$ then
$$\da_{\gotp\gotq}-\gotp\gotq\da_{\gotp\gotq}\neq(\dap-\gotp \dap)\times(\da_\gotq-\gotq\da_\gotq)\,.$$
\end{prf}

\begin{cor} \label{c:kfrelr} The set of $k$-free elements of a global Dedekind domain is openly Eulerian for any $k\ge2$. \end{cor}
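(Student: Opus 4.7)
The plan is to deduce the corollary as an immediate application of Proposition \ref{p:cprelrtyp}. Recall that an element $x \in D$ is $k$-free precisely when $v_\gotp(x) < k$ for every $\gotp \in \calp(D)$, which is equivalent to $x \notin \gotp^k$ for all $\gotp$. Hence the set $X$ of $k$-free elements of $D$ can be written as
\[
X = D - \bigcup_{\gotp \in \calp(D)} \gotp^k\,.
\]
This presents $X$ in the form required by Proposition \ref{p:cprelrtyp}, with the sequence of ideals being $(\gotp^k)_{\gotp \in \calp(D)}$.

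First I would verify the hypotheses. Distinct primes $\gotp \neq \gotq$ give coprime ideals $\gotp^k + \gotq^k = D$, so the family is pairwise coprime. Moreover, $\Omega(\gotp^k) = k \ge 2 > 1$ for every $\gotp$, so the hypothesis $\Omega(\gota_i) > 1$ for almost every $i$ is in fact satisfied for every $i$. Since each ideal $\gotp^k$ is a prime power, the ``if'' direction of part (2) of Proposition \ref{p:cprelrtyp} immediately gives that $X$ is Eulerian, i.e.\ $\xa = \prod_\gotp X(\gotp)$.

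It remains to upgrade this to the openly Eulerian property, and this is where I would use part (1): the set $X$ is of openly Eulerian type with respect to the partition $\calp(D) = \bigsqcup_\gotp \supp(\gotp^k) = \bigsqcup_\gotp \{\gotp\}$ (note $\calp_0 = \emptyset$ here). Since the pieces of this partition are the singletons $T_\gotp = \{\gotp\}$, we have $\da_\gotp = \dap$ and $X(T_\gotp) = X(\gotp)$. So ``openly Eulerian type'' for this partition is exactly ``openly Eulerian'', and each $X(\gotp)$ is open in $\dap$. Concretely, the chain \eqref{e:elrtyp} of inclusions specializes to $X(\gotp) = \dap - \gotp^k\dap$, which is manifestly open.

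I expect no real obstacle: the corollary is essentially a direct reading of Proposition \ref{p:cprelrtyp}, once one notices that the defining condition ``$v_\gotp(x) < k$ for every $\gotp$'' converts $X$ into the complement of a union of pairwise coprime prime power ideals. The only small point to check carefully is that the openly Eulerian type condition from part (1) upgrades to openly Eulerian here, which follows because each support $\supp(\gotp^k)$ is a singleton.
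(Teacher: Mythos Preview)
Your proposal is correct and matches the paper's approach: the corollary is stated without proof as an immediate consequence of Proposition~\ref{p:cprelrtyp}, and your verification spells out precisely the intended argument. The only addition you make explicit---that the partition into $\supp(\gotp^k)=\{\gotp\}$ is a partition into singletons, so that ``openly Eulerian type'' collapses to ``openly Eulerian''---is exactly what the paper leaves to the reader.
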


\noindent (The definition of $k$-free was given just after Corollary \ref{c:kpfree}.)

\begin{rmk} One could get a different proof of Proposition \ref{p:cprelrtyp} by computing the (asymptotic) density of $X$ and then using an ``Eulerian type'' version of  Proposition \ref{p:d>mc}. Our proof has the advantage of illustrating a different technique, which offers a complete description of $\xa$ just by a qualitative argument. \end{rmk}

\begin{thm} \label{t:asdmltp} Let $D$ and $X$ be as in Proposition \ref{p:cprelrtyp}. Then the asymptotic density of $X$ exists and it satisfies 
$$\dsas(X)=\mu(\xa)=\prod_{i\in\N}\left(1-\frac{1}{\|\gota_i\|}\right).$$ 
\end{thm}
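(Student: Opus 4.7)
The plan is to compute $\mu(\xa)$ by exploiting the openly Eulerian structure, and then sandwich the asymptotic density between the measures of finite truncations.

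For the first part, Proposition \ref{p:cprelrtyp}(1) identifies
$$\xa = \prod_i(D_i - \gota_i D_i) \times \prod_{\gotp \in \calp_0}\dap,$$
where $D_i := \prod_{\gotp \in \supp(\gota_i)}\dap$ and $\calp_0 := \calp(D) - \bigsqcup_i\supp(\gota_i)$. Applying Corollary \ref{c:egprd} factor by factor, together with the isomorphism $D_i/\gota_i D_i \simeq D/\gota_i$ (of size $\|\gota_i\|$), yields $\mu(\xa) = \prod_i(1 - \|\gota_i\|^{-1})$. The same reasoning applied to $\xa_n = \da - \bigcup_{i \le n}\hat\gota_i$, or equivalently the inclusion-exclusion \eqref{e:inclescl} combined with the fact that pairwise coprimality makes the events $\hat\gota_i$ measure-theoretically independent, gives $\mu(\xa_n) = \prod_{i \le n}(1 - \|\gota_i\|^{-1})$; and formula \eqref{e:dvnerdn} then supplies $\dsas(X_n) = \mu(\xa_n)$.

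Since $X \subseteq X_n$, property (Dn3) immediately yields $\dsasp(X) \le \mu(\xa_n) \to \mu(\xa)$. For the matching lower bound I invoke \eqref{e:d+-} in the form $\dsasm(X) = \dsas(X_n) - \dsasp(X_n - X)$ and aim to show $\dsasp(X_n - X) \to 0$ as $n \to \infty$. The starting point is the trivial inclusion $X_n - X \subseteq \bigcup_{i > n}\gota_i$. Before estimating densities I observe that $\sum_i\|\gota_i\|^{-1} < \infty$: after discarding the finitely many exceptions with $\Omega(\gota_i) \le 1$, pairwise coprimality lets me select pairwise distinct primes $\gotp_i \mid \gota_i$, and $\Omega(\gota_i) \ge 2$ forces $\|\gota_i\| \ge \|\gotp_i\|^2$, reducing convergence to $\sum_\gotp\|\gotp\|^{-2} < \infty$, a standard fact for global Dedekind domains (essentially convergence of $\zeta_D(2)$). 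In particular $\|\gota_i\| \to \infty$.

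The crucial remaining step is a uniform lattice-point bound: constants $C_1, C_2 > 0$, depending only on $D$ and $B$, such that for every nonzero ideal $\gota \subseteq D$,
$$|\gota \cap rB| \le C_1\|r\|/\|\gota\| \quad \text{if } \|\gota\| \le C_2\|r\|, \qquad \gota \cap rB \subseteq \{0\} \quad \text{otherwise.}$$
This follows from the standard geometry of numbers applied to the sublattice $\gota \subseteq D \subseteq F_S$ (of covolume $\|\gota\|/c$), along the lines of the argument underlying \eqref{e:dnsvlm}. Summing produces
$$|(X_n - X) \cap rB| \le 1 + C_1\|r\|\sum_{i > n}\|\gota_i\|^{-1},$$
and dividing by $|D \cap rB| \asymp \|r\|$ (via \eqref{e:c1c2}) gives $\dsasp(X_n - X) \le C_3\sum_{i > n}\|\gota_i\|^{-1}$, which tends to $0$ as $n \to \infty$. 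Combining the two bounds forces $\dsas(X) = \mu(\xa)$. The main obstacle is the uniform lattice-point estimate: it is routine in character but not stated explicitly earlier in the paper, and care is needed in handling the regime where $\|\gota\|$ is much larger than $\|r\|$ (where only the origin survives).
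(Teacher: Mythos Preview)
Your argument is correct and follows a genuinely different route from the paper's. The paper constructs an auxiliary monoid $\calk=\{\prod t_i^{n_i}\}$ of generators of the ideals $\hat\gota_i$, shows that the translates $k\xa$ disjointly cover $D-\{0\}$, and then applies Fatou's lemma on the counting measure of $\calk_1=\calk-\{1\}$ to squeeze $\dsasm(X)$ from below. Your approach is the truncation-and-tail method (essentially Theorem~\ref{t:mt}, which the paper itself flags as an alternative): bound $\dsasp(X_n-X)$ by a uniform lattice count $|\gota\cap rB|\le C_1\|r\|/\|\gota\|$ and sum the convergent tail $\sum_{i>n}\|\gota_i\|^{-1}$. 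Both methods ultimately rest on the same uniform ideal-lattice estimate (the paper invokes it as ``$|kD\cap rB|\le c_2\|rk^{-1}\|$'' via \eqref{e:c1c2}); you are right that this step is not fully justified in the text and needs the finiteness of the class group together with Dirichlet's unit theorem to control the shape of ideal lattices uniformly. Your route is more direct and transparent about where the analytic input enters; the paper's Fatou argument is more self-contained once one accepts the dominating function $g(k)=c_2/(c_1\|k\|)$, and it packages the ``no countable subadditivity for $\dsasp$'' issue neatly.

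One small correction: the implication ``$\Omega(\gota_i)\ge 2$ forces $\|\gota_i\|\ge\|\gotp_i\|^2$'' only holds if $\gotp_i$ is chosen as the prime of \emph{smallest} norm in $\supp(\gota_i)$; with that choice (and pairwise coprimality to keep the $\gotp_i$ distinct) your convergence argument for $\sum\|\gota_i\|^{-1}$ goes through.
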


\noindent Here $\dsas=\dsbas$, where $B$ is as in \S\ref{sss:asnt}. The choice of $B$ has no influence on the final result, but it is useful to fix it for the computations in the proof.

\begin{proof} We adapt a strategy used in \cite[\S5]{ks1} to compute the density of square-free integers. (Alternatively, one could use Theorem \ref{t:mt}, in the form suggested by 
Proposition \ref{p:ps} below.)

For every $i$ let $t_i\in\da$ be a fixed generator of the closed (hence principal) ideal $\widehat\gota_i$ and put
$$\calk:=\left\{\prod_{i\in \N}t_i^{n_i}\mid n_i\in \N,\text{ }n_i=0\text{ almost everywhere}\right\}\,.$$
Then $\calk$ is countable, contains $1$ and (as we will check later) it enjoys the properties: \begin{itemize}
\item[(K1)] $k\xa\cap h\xa=\emptyset$ for any two distinct $h,k\in\calk$;
\item[(K2)] $D-\{0\}\subseteq\bigsqcup_{k\in \calk}k\xa$. \end{itemize}
Let $\calk_1:=\calk-\{1\}$. By (K1) we can write 
$$\bigcup_{k\in\calk}k\xa=\xa\sqcup W\;\;\text{ with }\;\;W:=\bigsqcup_{k\in\calk_1}k\xa\,,$$
which yields
\begin{equation} \label{e:kbtsgt} 1\ge\mu(\xa)+\mu(W)=\sum_{k\in\calk}\mu(k\xa)=\sum_{k\in \calk}\frac{\mu(\xa)}{\|k\|}\,. \end{equation}
Since $D-\{0\}$ has density $1$, property (K2) and formula \eqref{e:d+-} imply
\begin{equation} \label{e:k2} 1=\dsasm(X)+\dsasp(W)\,. \end{equation}

We want to apply Fatou's lemma on the measure space $\calk_1$, with the counting measure $\nu$. By definition, one has 
$$\int_{\calk_1}fd\nu=\sum_{k\in\calk_1}f(k)$$
for any integrable function $f$. For $k\in\calk_1$, let $X_k:=k\xa\cap D$ and $\gotk:=k\da\cap D$. Fix $B$ as in \S\ref{sss:asnt} and note that \eqref{e:dnsvlm} implies the existence of  positive constants $c_1,c_2\in\R$ such that
\begin{equation} \label{e:c1c2} c_1\prod_{v\in S}|r|_v^n\le|D^n\cap rB|\le c_2\prod_{v\in S}|r|_v^n \end{equation}
for every $r\in F^*$ which is sufficiently big in the sense of \eqref{e:rsffcbg}. Then \eqref{e:c1c2} and \eqref{e:k&rB} imply
$$|X_k\cap rB|\le|\gotk\cap rB|\le\frac{c_2+1}{\|k\|}\prod_{v\in S}|r|_v$$
if $r$ is sufficiently large. For $r\in F^*$, consider the function $f_r\colon\calk_1\rightarrow[0,1]$ defined by
$$f_r(k):=\frac{|X_k\cap rB|}{|D\cap rB|}\,.$$
By the above (and using \eqref{e:c1c2} to bound $|D\cap rB|$ from below) we have
$$f_r(k)\le\frac{c_2+1}{c_1\|k\|}=:g(k)$$
and $g$ is integrable on $\calk_1$ by \eqref{e:kbtsgt}. Therefore we can use Fatou to obtain
\begin{equation} \label{e:fatou}  \sum_{k\in\calk_1}\limsup_{r\rightarrow\infty}f_r(k)=\int_{\calk_1}(\limsup_{r\rightarrow\infty}f_r)d\nu\ge\limsup_{r\rightarrow\infty}\int_{\calk_1}f_rd\nu=\limsup_{r\rightarrow\infty}\sum_{k\in\calk_1}f_r(k)\,.  \end{equation}

The equalities
$$\limsup_{r\in F^*}f_r(k)=\dsbasp(X_k)=\dsasp(X_k)$$ 
and
$$\limsup_{r\in F^*}\sum_{k\in\calk_1}f_r(k)=\dsbasp(W)=\dsasp(W)$$
hold by definition of asymptotic density. Moreover, we have $\mu(k\xa)\ge\dsasp(k\xa)=\dsasp(X_k)$ for every $k$, by Corollary \ref{c:m>d}. Thus \eqref{e:fatou} yields
$$\mu(W)=\sum_{k\in \calk_1}\mu(k\xa)\ge\sum_{k\in\calk_1}\dsasp(X_k)\ge\dsasp(W)$$
and hence
$$\dsasm(X)\stackrel{\text{by \eqref{e:k2}}}{=}1-\dsasp(W)\ge1-\mu(W)\stackrel{\text{by \eqref{e:kbtsgt}}}{\ge}\mu(\xa)\stackrel{\text{by \eqref{e:m>d}}}{\ge}\dsasp(X)\,,$$
showing that $X$ has density $\mu(\xa)$. The evaluation of this measure as a product is straightforward from the fact that all inclusions in \eqref{e:elrtyp} are equalities.

For checking (K1), take $h=\prod t_i^{m_i}$ and $k=\prod t_i^{n_i}$, with $h\neq k$. Then there must be an index $i_0$ such that $n_{i_0}\neq m_{i_0}$ and we can assume $n_{i_0}<m_{i_0}$ without loss of generality. If (K1) were false, one would have $ky_1=hy_2$ for some $y_1,y_2\in\xa$. Putting $e_\gotp:=v_\gotp(t_{i_0})=v_\gotp(\gota_{i_0})$ for every $\gotp\in\supp(\gota_{i_0})$, we get
$$v_\gotp(y_1)+n_{i_0}e_\gotp=v_\gotp(ky_1)=v_\gotp(hy_2)=v_\gotp(y_2)+m_{i_0}e_\gotp\,,$$
which yields
$$v_\gotp(y_1)=v_\gotp(y_2)+(m_{i_0}-n_{i_0})e_\gotp\ge e_\gotp\,.$$
But, as $y_1\notin\widehat\gota_{i_0}$, there must exist a prime $\gotp$ such that $v_\gotp(\gota_{i_0})>v_\gotp(y_1)$, contradiction.

As for (K2), given $z\in D-\{0\}$ we consider the ideal 
$$z\da=\prod_{\gotp\in \calp(D)}\widehat\gotp^{v_\gotp(z)}\,.$$
This can be rewritten as $z\da=xk\da$, where $x\in \xa$ and $k\in \calk$, by rearranging the occurring powers of the involved primes. Therefore $z=xku$, with $x,k$ as before and $u\in\da^*$. As $xu\in\xa$, we conclude that (K2) holds. \end{proof}

\subsection{The Poonen-Stoll condition} 

The following statement can be seen as a reformulation (in a more general setting) of \cite[Lemma 1]{ps1}, \cite[Lemma 3.1]{bbl}, \cite[Theorem 2.1]{mch} and \cite[Proposition 3.4]{cs}.

\begin{prop} \label{p:ps} Let $(S_j)_{j\in\N}$ be an increasing sequence of finite subsets of $\calp(D)$, covering it. For every $\gotp\in\calp(D)$ consider a subset $U_\gotp\subseteq\dap^n$ and put 
$X:=D^n\cap\prod U_\gotp$. Let $d=(d^+,d^-)$ be any density as in \S\ref{sss:dns}. Assume that \begin{itemize}
\item[(PS1)] $\mu_\gotp(\partial U_\gotp)=0$ holds for every $\gotp$;
\item[(PS2)] the Poonen-Stoll condition
\begin{equation} \label{e:ps} \lim_{j\rightarrow\infty}d^+\left(\bigcup_{\gotp\notin S_j}\big(D^n-U_\gotp\big)\right)=0 \end{equation}
holds. \end{itemize}
Then the density $d(X)$ exists and it is equal to $\prod\mu_\gotp(U_\gotp)$. \end{prop}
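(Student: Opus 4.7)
The plan is to interpolate between $X$ and a sequence of ``truncated'' approximations, each defined by finitely many local conditions. For each $j$, set
$$A_j := \prod_{\gotp \in S_j} U_\gotp \times \prod_{\gotp \notin S_j} \dap^n \subseteq \da^n, \qquad X_j := D^n \cap A_j.$$
Then $X \subseteq X_j$ for every $j$, and if $x \in X_j - X$ there must exist $\gotq \notin S_j$ with $x_\gotq \notin U_\gotq$, so
$$X_j - X \subseteq \bigcup_{\gotp \notin S_j}(D^n - U_\gotp).$$
By hypothesis (2) it follows that $d^+(X_j - X) \to 0$ as $j \to \infty$.

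The main step is to prove that $d(X_j) = \mu(\hat{X_j}) = \prod_{\gotp \in S_j} \mu_\gotp(U_\gotp)$ for every $j$. Since only finitely many factors of $A_j$ differ from $\dap^n$, the product topology gives
$$\overline{A_j} = \prod_{\gotp \in S_j} \overline{U_\gotp} \times \prod_{\gotp \notin S_j} \dap^n, \qquad A_j^\circ = \prod_{\gotp \in S_j} U_\gotp^\circ \times \prod_{\gotp \notin S_j} \dap^n.$$
Writing $\partial A_j = \overline{A_j} - A_j^\circ$ as a finite union of products involving exactly one factor $\partial U_\gotp$ and applying Corollary \ref{c:egprd} together with hypothesis (1) yields $\mu(\partial A_j) = 0$; the same corollary then shows $\mu(\overline{A_j^\circ}) = \mu(\overline{A_j}) = \prod_{\gotp \in S_j}\mu_\gotp(U_\gotp)$. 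Because $A_j^\circ$ is open and $D^n$ is dense in $\da^n$, the set $D^n \cap A_j^\circ$ is dense in $A_j^\circ$, hence $\overline{A_j^\circ} \subseteq \hat{X_j} \subseteq \overline{A_j}$, and squeezing gives $\mu(\hat{X_j}) = \prod_{\gotp \in S_j}\mu_\gotp(U_\gotp)$. Setting $Y_j := D^n - X_j$, one has $\hat{X_j} \cap \hat{Y_j} \subseteq \overline{A_j} \cap \overline{A_j^c} = \partial A_j$, so $\mu(\hat{X_j} \cap \hat{Y_j}) = 0$; Proposition \ref{p:intrs} then yields $d(X_j) = \mu(\hat{X_j})$.

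To conclude, apply \eqref{e:d+-} with containing set $X_j$ and subset $X$: for every $j$,
$$d^-(X) = d(X_j) - d^+(X_j - X).$$
As $j \to \infty$ the sequence $d(X_j) = \prod_{\gotp \in S_j}\mu_\gotp(U_\gotp)$ is non-increasing and converges to $\prod_\gotp \mu_\gotp(U_\gotp)$ (by definition of the infinite product via the cofinite filter), while $d^+(X_j - X) \to 0$ by the first paragraph. Hence $d^-(X) = \prod_\gotp \mu_\gotp(U_\gotp)$. Combining with $d^+(X) \le d^+(X_j) = d(X_j)$ from (Dn3), which gives $d^+(X) \le \prod_\gotp \mu_\gotp(U_\gotp)$, we obtain that $d(X)$ exists and equals $\prod_\gotp \mu_\gotp(U_\gotp)$.

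The main obstacle lies in the middle paragraph: establishing $d(X_j) = \mu(\hat{X_j})$ when the $U_\gotp$ are not assumed to be open or closed. This is precisely where hypothesis (1) enters in an essential way, both to kill $\mu(\partial A_j)$ via Corollary \ref{c:egprd} (so Proposition \ref{p:intrs} applies) and to pin down $\mu(\hat{X_j})$ by squeezing between the closed product $\overline{A_j}$ and the closure of the open product $A_j^\circ$, which in this setting have the same measure even though $\hat{X_j}$ need not coincide with either.
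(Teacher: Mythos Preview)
Your proof is correct and follows essentially the same strategy as the paper's: establish that each finite truncation has density equal to its measure via a boundary-measure-zero argument using hypothesis (1), then control the tail via the Poonen-Stoll condition (2) and pass to the limit. The only difference is packaging: the paper invokes Theorem~\ref{t:mt} (with $\calt=\{\sigma_j\}$, $\sigma_j=\prod_{\gotp\in S_j}\gotp^\infty$) and Remark~\ref{r:brd}, whereas you unwind that theorem by applying \eqref{e:d+-} directly and use Proposition~\ref{p:intrs} in place of Remark~\ref{r:brd}.
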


Note that the hypothesis (PS1) implies that each $U_\gotp$ is measurable.

\begin{prf} The idea is to apply Theorem \ref{t:mt}, taking $\calt=\{\sigma_j\}_{j\in\N}$ with $\sigma_j:=\prod_{\gotp\in S_j}\gotp^\infty$.

Put $U:=\prod U_\gotp$. Then $U(\gotp)$ is the closure of $U_\gotp$ in $\dap^n$. Hypothesis (PS1) yields $\mu_\gotp(U_\gotp)=\mu_\gotp(U(\gotp))$. The equality 
$$U_{\sigma_j}=\prod_{\gotp\in S_j}U(\gotp)\times\prod_{\gotp\notin S_j}\dap^n$$ 
yields (using $\partial(A\times B)\subseteq(\partial A\times B)\cup(A\times\partial B)$ and $\partial U(\gotp)\subseteq\partial U_\gotp$)
\begin{equation} \label{e:brdprd} 
\partial U_{\sigma_j}\subseteq W_j:=\bigcup_{\gotp\in S_j}\left(\partial\,U_\gotp\times\prod_{\gotq\in S_j-\{\gotp\}}U(\gotq)\times\prod_{\gotq\notin S_j}\hat D_\gotq^n\right)\,. 
\end{equation}
By hypothesis (PS1), each term in the union on the right-hand side of \eqref{e:brdprd} has measure $0$. Hence $\mu(W_j)=\mu(\partial\,U_{\sigma_j})=0$ and Lemma \ref{l:brtt} shows that 
$d(U_{\sigma_j})=\mu(U_{\sigma_j})$ holds for every $j$.

For every $\gotp$, let $Z_\gotp=\hpi_{\gotp^\infty}^{-1}(U_\gotp)=U_\gotp\times\prod_{\gotq\neq\gotp}\da_\gotq^n\,.$ Then $U=\cap_\gotp Z_\gotp$ and we obtain
$$U_{\sigma_j}-U\subseteq W_j\cup(U_{\sigma_j}-\bigcap_{\gotp\notin S_j}Z_\gotp)=W_j\cup\bigcup_{\gotp\notin S_j}(U_{\sigma_j}-Z_\gotp)\subseteq W_j\cup\bigcup_{\gotp\notin S_j}(\da^n-Z_\gotp).$$
Note that $D^n\cap(\da^n-Z_\gotp)=D^n-U_\gotp$. Moreover $W_j$ is closed and thus $\mu(W_j)=0$ implies $d(W_j)=0$. Hence
$$d^+(U_{\sigma_j}-U)\le d(W_j)+d^+\left(\bigcup_{\gotp\notin S_j}\big(D^n-U_\gotp\big)\right)=d^+\left(\bigcup_{\gotp\notin S_j}\big(D^n-U_\gotp\big)\right)$$
shows that \eqref{e:ps} implies $\lim d^+(U_{\sigma_j}-U)=0$. Therefore we can apply Theorem \ref{t:mt} to $U$, getting
$$d(X)=d(D^n\cap U)=d(U)=\mu(\hat U)=\prod\mu(U(\gotp))=\prod\mu(U_\gotp)\,.$$
\end{prf}

\begin{cor} \label{c:psopelr} For every $\gotp\in\calp(D)$ let $U_\gotp\subseteq\dap^n$ be compact open and take $X:=D^n\cap\prod U_\gotp$. If \eqref{e:ps} holds and $\prod\mu_\gotp(U_\gotp)>0$ then $X$ is openly Eulerian, with $\xa=\prod_\gotp U_\gotp$. \end{cor}

\begin{prf} If $U_\gotp$ is compact open then $\partial U_\gotp=\emptyset$ and so (PS1) is automatically true. Now just apply Propositions \ref{p:ps} and \ref{p:d>mc}. \end{prf}

\begin{rmk} Consider a partition $\calp(D)=\bigsqcup_{i\in\N}T_i$ and let $\da_i$ be as in \eqref{e:diTi}. Proposition \ref{p:ps} is still valid (with the same proof) replacing the sets 
$U_\gotp$ with $U_i\subseteq\da_i^n$. In this situation, Corollary \ref{c:psopelr} yields a set of openly Eulerian type. \end{rmk}

\subsubsection{Ekedahl's theorem} The following is a rewording of \cite[Theorem 1.2]{ekd}.

\begin{thm}[Ekedahl] \label{t:ekd} Let $N$ be a positive integer and $\Sigma$ any subset of $(\Z/N\Z)^n$. Also, let $Z$ be a closed subscheme of the affine space $\A_\Z^n$ and put
$$Y:=\hpi_N^{-1}(\Sigma)\cap\bigcap_{p\nmid N}\hpi_p^{-1}(\F_p^n-Z(\F_p))\,.$$
Then the asymptotic density of the set $X:=Y\cap\Z^n$ exists and it is equal to $\mu(Y)$. \end{thm}

Ekedahl's original proof is not easy to follow; the simplest approach to Theorem \ref{t:ekd} and its generalization to global Dedekind domains is probably to use the Poonen-Stoll strategy in the following way. 

Let $U_p\subseteq\Z_p$ consist of those points whose reduction modulo $p$ is not in $Z(\F_p)$. Then each $U_p$ is compact open and $Y$ can be written as a product $U_N\times\prod_{p\nmid N}U_p$, where $U_N$ is a compact open subset of $\prod_{p|N}\Z_p$\,. With these $U_p$ and $d$ the asymptotic density, the validity of \eqref{e:ps}, with a precise estimate on the error term, is proved in \cite[Theorem 3.3]{bh} (see also \cite[Theorem 18]{bsw} for a similar result in the setting of global fields, from which one can deduce a version of Ekedahl's theorem with $\Z$ replaced by a global Dedekind domain $D$).\\

It is clear from the above discussion and Corollary \ref{c:psopelr} that the set $X$ in Theorem \ref{t:ekd} is of openly Eulerian type.

\begin{rmk} As remarked in \cite[Example 3]{cs} (and is implicit in \cite[Lemma 2]{ps1}), Ekedahl's theorem immediately yields the density of coprime pairs in $\Z^2$. With a little more work, the 
same method applies to check that sets of tuples with a fixed greatest common divisor in a global Dedekind domain have density equal to the measure computed in \eqref{e:prbbgcd}. \end{rmk}

\subsection{Some questions} Let $D$ be a global Dedekind domain.

When a set $X\subseteq D^n$ is defined by local conditions (e.g., by fixing $U_\gotp\subseteq\dap^n$ for every $\gotp$ and putting $X:=D^n\cap\prod U_\gotp$, as in Proposition \ref{p:ps}), one often expects it has (asymptotic) density computable by a product of local terms. If the local conditions correspond to open sets, then Proposition \ref{p:d>mc} can be used to deduce that $X$ is openly Eulerian. We provide a typical example.

\begin{prop} \label{p:dl2} Let $D$ be either the ring of integers of a number field or the ring of $S$-integers of a global function field, with $S$ any finite set of places. Let $f\in D[x]$ be a separable polynomial and $k\ge2$ an integer. In the number field case, assume also $\deg(f)\le k/[F:\Q]$. Then the set 
$$X_{f,k}:=\{a\in D\mid f(a)\text{ \em is }k\text{-\em free}\}$$
is openly Eulerian. \end{prop}

\begin{prf} For every $\gotp\in\calp(D)$, let $U_\gotp:=\{a\in\dap\mid f(a)\notin\gotp^k\}$ and $V_\gotp$ the set of zeroes of $f$ in $D/\gotp^k$. Then $X=D\cap\prod U_\gotp$; moreover, each $U_\gotp$ is compact open, because it is the lift to $\dap$ of the complement $V_\gotp$, and the same observation yields
$$\mu_\gotp(U_\gotp)=1-\frac{|V_\gotp|}{\|\gotp\|^k}\;.$$ 
By the ideas of Proposition \ref{p:ps}, it is proved in \cite[Theorem 3.2 and Corollary 3.4]{sch} that the equality 
\begin{equation} \label{e:dl2} \dsas(X_{f,k})=\prod\mu_\gotp(U_\gotp) \end{equation}
holds for an asymptotic density satisfying conditions (Dn1)-(Dn7). Finally, the product on the right of \eqref{e:dl2} is positive because $k\ge2$ and $|V_\gotp|<\deg(f)$ for almost every $\gotp$, by \cite[Lemma 3.1]{sch}. One concludes by Proposition \ref{p:d>mc}. \end{prf}

More generally, for $D$ any global Dedekind domain and $k\ge2$, take $f\in D[x_1,...,x_n]$ and let $X_{f,k}$ consist of those $a$ such that $f(a)$ is $k$-free. Then it is conjectured 
\begin{equation} \label{e:plnmsqrfr} \dsas(X_{f,k})\stackrel{?}{=}\prod_\gotp\left(1-\frac{c_\gotp}{\|\gotp\|^{kn}}\right)\,, \end{equation}
where $c_\gotp$ is the number of zeroes of $f$ in $(D/\gotp^k)^n$. By the work of many mathematicians (especially for $k=2$), it is known that \eqref{e:plnmsqrfr} holds in a number of 
cases (see for example \cite{hoo}, \cite{rams}, \cite{grn}, \cite{poon}, \cite{bh}, \cite{bsw}, \cite{bsw2}), but its general validity is still an open problem. On the other hand, the same reasoning as in Proposition \ref{p:dl2} yields the following result.

\begin{thm} Let $D$, $f$, $k$ and $X_{f,k}$ be as above. If \eqref{e:plnmsqrfr} holds and the product on the right is non-zero, the set $X_{f,k}$ is openly Eulerian. \end{thm}

\noindent Thus the statement that $X_{f,k}$ is openly Eulerian can be seen as a weak form of conjecture \eqref{e:plnmsqrfr}.\\

One can be a bit more daring and ask: \begin{itemize}
\item[({\bf Q1})] if $Y\subseteq D^m$ is openly Eulerian and $f\colon D^n\rightarrow D^m$ is a polynomial map, is it true that the inverse image $f^{-1}(Y)$ is also openly Eulerian? \end{itemize}
The case of $X_{f,k}$ discussed above arises when the map $f$ is given by a single polynomial and $Y$ is the set of $k$-free elements. Taking as $Y$ a set of tuples with a fixed greatest common divisor, one obtains more examples where a positive answer to ({\bf Q1}) can be obtained joining Proposition \ref{p:d>mc} with known results (like \cite[Lemma 2]{ps1}, \cite[Theorem 3.2]{poon}, \cite[Theorem 3.1]{butt}, \cite[Theorem 4.1]{chs}).

Ekedahl's theorem suggests another set of potential candidates for being openly Eulerian, namely sets of points whose reduction modulo $\gotp^k$ lies out of $Z(D/\gotp^k)$ for every 
$\gotp\in\calp(D)$, with $Z$ a given closed subscheme of $\A^n_D$\,. In this case, a proof of Eulerianity could be interpreted as the weak version of a positive answer to the density question raised in \cite[\S1, Remark i)]{ekd}.\\

In the examples above, we can obtain Eulerianity by knowledge of the density. It becomes natural to ask if the opposite approach can work, that is, if a converse of Proposition \ref{p:d>mc} is true. \begin{itemize}
\item[({\bf Q2})] If $X$ is almost closed and of openly Eulerian type, is it true that it has upper (asymptotic) density $\mu(\xa)$ ? Or, more optimistically, that $\mu(\xa)$ is its density?
\end{itemize}

The condition that $X$ is almost closed is needed because any dense set is openly Eulerian, but, as we discussed in \S\ref{sss:rlclsd}, its density could be everything.

Some reason of hope for the truth of ({\bf Q2}) might come from Proposition \ref{p:ps}: if $X$ is openly Eulerian and almost closed then $X\cap\dap^n$ must be ``quite big'' and this can 
support the intuition that the sets $D^n-X(\gotp)$ are small enough to make \eqref{e:ps} true. However, the evidence we know of appears too scarce to seriously conjecture a positive answer 
for question ({\bf Q2}).


\begin{thebibliography}{100}
\bibitem{am} M.F.~Atiyah - I.G.~Macdonald: {\em Introduction to commutative algebra}. Addison-Wesley Publishing Co., Reading, Mass.-London-Don Mills, Ont. 1969 ix+128 pp.
\bibitem{bsc} A.S.~Besicovitch: On the density of certain sequences of integers. {\em Math.~Ann.} {\bf 110} (1935), no. 1, 336-341. 
\bibitem{bh} M.~Bhargava: The geometric sieve and the density of square-free values of invariant polynomials. arXiv:1402.0031.
\bibitem{bsw} M.~Bhargava - A.~Shankar - X.~Wang: Geometry-of-numbers methods over global fields I: Prehomogeneous vector spaces. arXiv:1512.03035.
\bibitem{bsw2} M.~Bhargava - A.~Shankar - X.~Wang: Squarefree values of polynomial discriminants I. {\em Invent.~Math.} {\bf 228} (2022), no. 3, 1037-1073.
\bibitem{b-ac} N.~Bourbaki: {\em \'El\'ements de math\'ematique. Alg\`ebre commutative. Chapitres 5 \`a 7.} Springer-Verlag 2006.
\bibitem{butt} R.N.~Buttsworth: On the probability that given polynomials have a specified highest common factor. {\em J.~Number Theory} {\bf 12} (1980), no.~4, 487-498. Corrigenda in {\em J.~Number Theory} {\bf 16} (1983), no.~2, 283.
\bibitem{bbl} M.J.~Bright - T.D.~Browning - D.~Loughran: Failures of weak approximation in families. {\em Compos.~Math.} {\bf 152} (2016), no. 7, 1435-1475.
\bibitem{bf} T.C.~Brown - A.R.~Freedman: The uniform density of sets of integers and Fermat's last theorem. {\em C.~R.~Math.~Rep.~Acad.~Sci. Canada} {\bf 12} (1990), no.~1, 1-6
\bibitem{buck} R.C.~Buck: The measure theoretic approach to density. {\em Amer.~J.~Math.} {\bf 68} (1946), 560-580.
\bibitem{cas} J.W.S.~Cassels: Global fields. {\em Algebraic Number Theory (Proc.~Instructional Conf., Brighton, 1965)}, 42-84, Thompson, Washington, D.C., 1967.
\bibitem{chs} J.~Chidambaraswamy - R.~Sitaramachandra Rao: On the probability that the values of $m$ polynomials have a given g.c.d. {\em J.~Number Theory} {\bf 26} (1987), no.~3, 237-245.
\bibitem{clab} L.~Claborn: Dedekind domains and rings of quotients. {\em Pacific J.~Math.} {\bf 15} (1965), 59-64. 
\bibitem{cs} J.E.~Cremona - M.~Sadek: Local and global densities for Weierstrass models of elliptic curves. {\em Math.~Res.~Lett.} {\bf 30} (2023), no.~2, 413-461.
\bibitem{de1} H.~Davenport - P.~Erd\H os: On sequences of positive integers. {\em Acta Arith.} {\bf 2} (1936), 147-151.
\bibitem{de2} H.~Davenport - P.~Erd\H os: On sequences of positive integers. {\em J.~Indian Math.~Soc.~(N.S.)} {\bf 15} (1951), 19-24.
\bibitem{sch} L.~Demangos - I.~Longhi: A note on the density of $k$-free polynomial sets, Haar measure and global fields. {\em Quaest.~Math.} {\bf 45} (2022), no.~9, 1373-1397.
\bibitem{dl3} L.~Demangos - I.~Longhi: Arithmetic densities in global fields. In preparation; most of results referred to in the present work can be found in subsection 4.3 of 
arXiv:2009.04229.v3 
\bibitem{ekd} T.~Ekedahl: An infinite version of the Chinese remainder theorem. {\em Comment.~Math.~Univ.~St.~Paul.} {\bf 40} (1991), no. 1, 53-59.
\bibitem{eht} P.~Erd\H os - R.R.~Hall - G.~Tenenbaum: On the densities of sets of multiples. {\em J.~Reine Angew.~Math.} {\bf 454} (1994), 119-141.
\bibitem{fs} A.R.~Freedman - J.J.~Sember: Densities and summability. {\em Pacific J.~Math.} {\bf 95} (1981), no. 2, 293-305. 
\bibitem{fptw} S.~Frisch - M.~Pa\v st\'eka - R.~F.~Tichy - R.~Winkler: Finitely additive measures on groups and rings. {\em Rend.~Circ.~Mat.~Palermo} (2) {\bf 48} (1999), no. 2, 323-340.
\bibitem{fga} A.~Fuchs - R.~Giuliano Antonini: Th\'eorie g\'en\'erale des densit\'es. {\em Rend.~Accad.~Naz.~Sci.~XL Mem.~Mat. (5)} {\bf 14} (1990), no. 1, 253-294. 
\bibitem{gld} O.~Goldman: On a special class of Dedekind domains. {\em Topology} {\bf 3} (1964), suppl. 1, 113-118. 
\bibitem{grn} A.~Granville: $ABC$ allows us to count square-frees, {\em Internat.~Math.~Res.~Notices} {\bf 1998}, no.~19, 991-1009.
\bibitem{gr} G.~Grekos: On various definitions of density (survey). {\em Tatra Mt.~Math.~Publ.} {\bf 31} (2005), 17-27.
\bibitem{hoo} C.~Hooley: On the power free values of polynomials. {\em Mathematika} {\bf 14} (1967), 21-26.
\bibitem{huck} C.~Huck: On the logarithmic probability that a random integral ideal is $\cala$-free. Ergodic theory and dynamical systems in their interactions with arithmetics and combinatorics, 249-258, Lecture Notes in Math., {\bf 2213}, Springer, Cham, 2018.
\bibitem{ind1} K.-H.~Indlekofer: Number theory - probabilistic, heuristic, and computational approaches. {\em Comput.~Math.~Appl.} {\bf 43} (2002), no. 8-9, 1035-1061
\bibitem{ind2} K.-H.~Indlekofer: New approach to probabilistic number theory - compactifications and integration. {\em Probability and number theory - Kanazawa 2005}, 133-170,
Adv. Stud. Pure Math., {\bf 49}, Math. Soc. Japan, Tokyo, 2007. 
\bibitem{ks1} H.~Kubota - H.~Sugita: Probabilistic proof of limit theorems in number theory by means of adeles. {\em Kyushu J.~Math.} {\bf 56} (2002), no. 2, 391-404.
\bibitem{lt} P.~Leonetti - S.~Tringali: On the notions of upper and lower density. {\em Proc.~Edinb.~Math.~Soc.~(2)} {\bf 63} (2020), no. 1, 139-167.
\bibitem{lt2} P.~Leonetti - S.~Tringali: On small sets of integers. {\em Ramanujan J.} {\bf 57} (2022), no. 1, 275-289.
\bibitem{lonied} S.K.~Lo - H.~Niederreiter: Banach-Buck measure, density, and uniform distribution in rings of algebraic integers. {\em Pacific J.~Math.} {\bf 61} (1975), no. 1, 191-208.
\bibitem{dlms} I.~Longhi - Y.~Mu - F.M.~Saettone: Coset topologies on $\Z$ and arithmetic applications. {\em Expo.~Math.} {\bf 41} (2023), no.~1, 71-114.
\bibitem{lub} A.~Lubotzky: Book reviews. {\em Bull.~Amer.~Math.~Soc.} {\bf 38} (2001), 475-479.
\bibitem{mhr} D.~Maharam: Finitely additive measures on the integers, {\em Sankhy\=a Ser.~A} {\bf 38} (1976), no.~1, 44-59.  
\bibitem{mcl6} J.-L.~Mauclaire: Suites limite-p\'eriodiques et th\'eorie des nombres. VI. {\em Proc.~Japan Acad.~Ser.~A Math.~Sci.} {\bf 57} (1981), no. 4, 223-225.
\bibitem{mch} G.~Micheli: A local to global principle for densities over function fields.  arXiv:1701.01178v1
\bibitem{nov0} E.V.~Novoselov: Integration on a bicompact ring and its applications to number theory.~(Russian) {\em Izv.~Vys\v s.~U\v cebn.~Zaved.~Matematika} {\bf 1961} (1961) no.~3 (22) 66-79
\bibitem{nov} E.V.~Novoselov: A new method in probabilistic number theory. {\em Izv. Akad. Nauk SSSR Ser. Mat.} {\bf 28} (1964) 307-364. English translation in: American Mathematical Society Translations. Series 2, Vol. {\bf 52}: Twelve papers on topology, algebra and number theory. American Mathematical Society, Providence, R.I. 1966 iv+275 pp.
\bibitem{ok} K.~Okutsu: On the measure on the set of positive integers. {\em Proc.~Japan Acad.~Ser.~A Math.~Sci.} {\bf 69} (1993), no. 6, 173-174. 
\bibitem{pst} M.~Pa\v st\'eka: Some properties of Buck's measure density. {\em Math. Slovaca} {\bf 42} (1992), no. 1, 15-32.
\bibitem{pst3} M.~Pa\v st\'eka: Remarks on Buck's measure density. Measure theory (Liptovsk\'y J\'an, 1993). {\em Tatra Mt.~Math.~Publ.} {\bf 3} (1993), 191-200.
\bibitem{pst2} M.~Pa\v st\'eka: Note on ideals in polyadic numbers. {\em Glas.~Mat.~Ser. III} 30(50) (1995), no. 1, 25-28.
\bibitem{pll} P.~Pollack: Finiteness theorems for perfect numbers and their kin. {\em Amer.~Math.~Monthly} {\bf 119} (2012), no.~8, 670-681. Erratum: {\em Amer.~Math.~Monthly} {\bf 120} (2013), no.~5, 482.
\bibitem{poon} B.~Poonen: Squarefree values of multivariable polynomials. {\em Duke Math.~J.} {\bf 118} (2003), no.~2, 353-373.
\bibitem{ps1} B.~Poonen - M.~Stoll: A local-global principle for densities. {\em Topics in number theory} (University Park, PA, 1997), 241-244, Math. Appl., 467, Kluwer Acad. Publ., 
Dordrecht, 1999.
\bibitem{ps2} B.~Poonen - M.~Stoll: The Cassels-Tate pairing on polarized abelian varieties. {\em Ann. of Math.} (2) {\bf 150} (1999), no. 3, 1109-1149.
\bibitem{rams} K.~Ramsay: Square-free values of polynomials in one variable over function fields. {\em Internat.~Math.~Res.~Notices} {\bf 1992}, no.~4, 97-102
\bibitem{saett} F.M.~Saettone: Profinite Completion of Integers. Tesi di laurea triennale, Universit\`a degli Studi di Torino, 2017. 
\bibitem{ser} J.-P.~Serre: {\em A course in arithmetic}. Graduate Texts in Mathematics, {\bf 7}. Springer-Verlag, New York-Heidelberg, 1973.
\bibitem{tnm} G.~Tenenbaum: {\em Introduction to analytic and probabilistic number theory}. Third edition. Graduate Studies in Mathematics, {\bf 163}. American Mathematical Society, Providence, RI, 2015.
\bibitem{vndw} E.K.~van Douwen: Finitely additive measures on $\N$. {\em Topology Appl.} {\bf 47} (1992), no.~3, 223-268.
\bibitem{wl} A.~Weil: {\em Basic number theory}. Reprint of the second (1973) edition. Classics in Mathematics. Springer-Verlag, Berlin, 1995

\end{thebibliography}
\end{document}